\documentclass[]{article}

\usepackage[T1]{fontenc}
\usepackage[utf8]{inputenc}
\usepackage{lmodern}
\usepackage[
    a4paper, 
    top=3cm, 
    bottom=3cm, 
    left=3cm, 
    right=3cm]{geometry}
\usepackage{titling}
\pretitle{\begin{center}\LARGE\bfseries}
\posttitle{\par\end{center}}
\usepackage[english]{babel}
\usepackage{csquotes}
\usepackage{amsmath, amssymb}
\usepackage{amsthm}
\usepackage{xcolor}
\usepackage{mathtools}
\usepackage{graphicx}
\usepackage{array}
\usepackage{titlesec}
\usepackage{eepic}
\usepackage{multicol}
\usepackage[dvipsnames]{xcolor}
\usepackage{color}
\usepackage[hypcap=true]{caption}
\usepackage[cmtip,all]{xy}
\usepackage{enumitem}
\usepackage{leftidx}
\usepackage[mathscr]{euscript}
\usepackage{setspace}
\usepackage{needspace}
\usepackage{tikz}
\usetikzlibrary{cd}

\usepackage[
  backend=biber,
  style=alphabetic,
  sorting=nyt,
  giveninits=true,
  maxbibnames=99,
  maxalphanames=3,
  minalphanames=3,
  date=year,
  doi=false,
  url=false,
  eprint=false,
  isbn=false
]{biblatex}

\usepackage[hidelinks, linktocpage=true]{hyperref}
\hypersetup{
    colorlinks = true,
    citecolor  = blue!75!black,  
    urlcolor   = blue!75!black,   
    linkcolor  = red!70!black 
}

\usepackage[linktocpage=true,hidelinks]{hyperref}

\DeclareNameAlias{author}{given-family}
\DeclareNameAlias{editor}{given-family}
\DeclareNameAlias{translator}{given-family}

\DeclareDelimFormat[bib]{nametitledelim}{\addcomma\space}

\DeclareFieldFormat[article,inproceedings,unpublished]{title}{#1}

\DeclareFieldFormat[incollection]{title}{#1}
\DeclareFieldFormat[misc]{title}{#1\isdot}

\DeclareFieldFormat[article]{journaltitle}{#1}

\DeclareFieldFormat[article]{volume}{\mkbibbold{#1}}

\DeclareFieldFormat[article,inproceedings]{pages}{#1}

\renewbibmacro*{in:}{%
  \ifentrytype{article}
    {}
    {\bibstring{in}\printunit{\intitlepunct}}}

\renewbibmacro*{journal+issuetitle}{%
  \usebibmacro{journal}%
  \setunit*{\addspace}%
  \printfield{volume}%
  \setunit{\addspace}%
  \printtext[parens]{\printdate}%
  \newunit}

\renewbibmacro*{note+pages}{%
  \printfield{note}%
  \setunit{\addcomma\space}%
  \iffieldundef{pages}
    {\printfield{eid}}
    {\printfield{pages}}%
  \newunit}

\DeclareLabelalphaTemplate{
  \labelelement{
    \field[final]{shorthand}
    \field{label}
    \field[strwidth=2,strside=left,ifnames=1]{labelname}
    \field[strwidth=1,strside=left]{labelname}
  }
  \labelelement{
    \field[strwidth=2,strside=right]{year}
  }
}

\AtEveryBibitem{%
  \ifentrytype{inproceedings}
    {%
      \clearlist{publisher}%
      \clearlist{location}%
    }
    {}%
}

\AtEveryBibitem{%
  \ifentrytype{misc}
    {\clearfield{eprintclass}}
    {}%
}

\usepackage{parskip}

\makeatletter
\def\thm@space@setup{%
  \thm@preskip=\parskip \thm@postskip=0pt
}
\makeatother

\allowdisplaybreaks

\usepackage{titlesec}
\titleformat{\section}[block]{\color{black}\large\bfseries\filcenter}{\thesection.}{0.5em}{}
\titleformat{\subsection}[hang]{\bfseries}{}{0.5em}{}

\numberwithin{equation}{section}

\newtheorem{theorem}{Theorem}[section]
\newtheorem{lemma}[theorem]{Lemma}
\newtheorem{proposition}[theorem]{Proposition}

\newtheorem{problem}[theorem]{Problem}

\newtheorem{remark}[theorem]{Remark}

\newtheorem{ltheorem}{Theorem}

\newtheorem{Btheorem}{Theorem}

\titleformat{\subsection}[runin]
  {\bfseries}{\thesubsection.}{0.5em}{}[.]

\titleformat{\subsubsection}[runin]
  {\bfseries}{}{0pt}{}[.]

\makeatletter
\renewenvironment{proof}[1][\proofname]{%
   \par\pushQED{\qed}\normalfont%
   \topsep6\p@\@plus6\p@\relax
   \trivlist\item[\hskip\labelsep\bfseries#1\@addpunct{.}]%
   \ignorespaces
}{%
   \popQED\endtrivlist\@endpefalse
}
\makeatother

\newcommand{\CC}{\mathbf{C}}
\newcommand{\DD}{\mathbf{D}}
\newcommand{\EE}{\mathbf{E}}
\newcommand{\FF}{\mathbf{F}}

\newcommand{\HH}{\mathbf{H}}

\newcommand{\NN}{\mathbf{N}}

\newcommand{\QQ}{\mathbf{Q}}
\newcommand{\RR}{\mathbf{R}}
\newcommand{\TT}{\mathbf{T}}

\newcommand{\ZZ}{\mathbf{Z}}

\newcommand{\A}{\mathcal{A}}
\newcommand{\B}{\mathcal{B}}
\newcommand{\C}{\mathcal{C}}

\newcommand{\G}{\mathcal{G}}

\def\L{\mathcal{L}}
\newcommand{\M}{\mathcal{M}}

\newcommand{\R}{\mathcal{R}}
\newcommand{\T}{\mathcal{T}}

\newcommand{\V}{\mathcal{V}}

\newcommand{\Cc}{\mathscr{Cs}}

\newcommand{\sA}{\mathrm{A}}
\newcommand{\sB}{\mathrm{B}}

\newcommand{\sG}{\mathrm{G}}

\newcommand{\sK}{\mathrm{K}}

\newcommand{\sN}{\mathrm{N}}

\newcommand{\into}{\hookrightarrow}

\DeclareMathOperator*\supp{\mathrm{supp}}

\def\XXint#1#2#3{{\setbox0=\hbox{$#1{#2#3}{\int}$ }
\vcenter{\hbox{$#2#3$ }}\kern-.6\wd0}}

\def\1{\mathbf{1}}
\def\Id{\mathrm{id}}

\def\M{\mathcal{M}}
\def\A{\mathcal{A}}

\def\Fou{\mathcal{F}}

\renewcommand{\Re}{\operatorname{\mathbf{Re}}}
\renewcommand{\Im}{\operatorname{\mathbf{Im}}}

\newcommand{\rad}{\mathrm{rad}}
\newcommand{\vertiii}[1]{
 {\left\vert\kern-0.25ex\left\vert\kern-0.25ex\left\vert #1 
  \right\vert\kern-0.25ex\right\vert\kern-0.25ex\right\vert}
}

\def\CB{\mathcal{C B}}

\def\cb{\mathrm{cb}}

\def\Tr{\mathrm{Tr}}

\def\diag{\mathrm{diag}}
\newcommand{\ev}{\mathrm{ev}}

\def\SL{\mathrm{SL}}

\def\PGL{\mathrm{PGL}}

\def\PSL{\mathrm{PSL}}

\def\PSO{\mathrm{PSO}}
\def\HB{\mathrm{O}} 

\newcommand{\acts}{\curvearrowright}
\newcommand{\Sph}{\Theta}

\title{ Radial Schur multipliers on trees\\[1pt] and the failure of Fourier-Schur transference}
\author{Adrián M. González-Pérez, Javier Parcet, \\Jorge Pérez García and Simeng Wang}
\date{}

\begin{document}

\maketitle

\begin{abstract}
Given $1 < p < \infty$ and a function $m \colon \NN_0 \to \CC$, we provide a characterization of radial Schur multipliers $M(x,y) = m(d(x,y))$ on the Schatten $p$-class of a tree $(X,d)$ equipped with its graph metric. That is, the operator norm of the radial Schur multiplier $S_M$ is controlled by the norm of the Fourier multiplier $T_m$ on the torus:
\[
    \big\| S_M \colon S_p(X) \to S_p(X) \big\|_\cb \lesssim \frac{p^4}{(p-1)^2} \, \big\| T_m \colon L_p(\TT) \to L_p(\TT) \big\|_\cb,
\]
with both quantities being comparable when the tree $X$ has infinite diameter. This is a rather unexpected inequality, which necessarily fails at the endpoints $p=1,\infty$ due to the work of Haagerup, Steenstrup and Szwarc. As an application, we construct the first examples of completely $S_p$-bounded Herz-Schur multipliers in the free group that are unbounded as Fourier multipliers. This result disproves Fourier-Schur transference---a longstanding open problem since the work of Neuwirth and Ricard in 2011. In line with this, our main result also implies the surprising fact that free balls Schur multipliers are uniformly $S_p$-bounded, in sharp contrast to their Fourier behavior. In addition, we disprove Fourier-Schur transference over the algebraic groups $\PGL_2(\QQ_q)$ and $\PSL_2(\RR)$ with $\sK$-biinvariant counterexamples, the latter involving additional geometric and probabilistic techniques.
\end{abstract}

\section*{Introduction}

The theory of Schur multipliers on Schatten $p$-classes plays a key role in several aspects of harmonic analysis and geometric group theory. Before exploring these connections, let us formulate our main result. Given a discrete set $\Omega$ and $M \colon \Omega \times \Omega \to \CC$ bounded, the corresponding Schur multiplier acts on matrices by entrywise multiplication
\[
    S_M(A) = \Big[ M(j,k) A_{j,k} \Big]_{j,k \in \Omega}.
\]
Throughout the text, the operator $S_M$ will act on the Schatten $p$-classes $S_p(\Omega)$. As usual, we will also endow $S_p$ with its natural operator space structure and denote by $\| \cdot\|_\cb$ the completely bounded norm of a linear operator, see Section \ref{Sect1} for the precise definitions.
This article starts with Schur multipliers on trees $(X,d)$ equipped with their graph distance $d$. Indeed, a function $M \colon X \times X \to \CC$ will be called radial if it is of the form $M(x,y) = m(d(x,y))$ for some bounded function $m \colon \NN_0 \to \CC$, which we identify with its even extension $m \colon \ZZ \to \CC$. Our main result gives a characterization of radial $S_p$-bounded Schur multipliers on trees.


\begin{ltheorem}
    \label{Main Thm}
    Let $M(x,y) = m(d(x,y))$ be a radial symbol on a tree and $T_m$ the corresponding even Fourier multiplier on the torus $\TT$. Then, for every $1 < p < \infty$ we have 
    \[
        \big\| S_M \colon S_p(X) \to S_p(X) \big\|_\cb \, \lesssim \, p_\ast^2 \, \big\| T_m \colon L_p(\TT) \to L_p(\TT) \big\|_\cb,
    \]
    where $p_\ast := \max\{p,\tfrac{p}{p-1}\}$. If $X$ has infinite diameter, the reciprocal inequality also holds 
    \[
        \big\| S_M \colon S_p(X) \to S_p(X) \big\|_\cb \ge \big\| T_m \colon L_p(\TT) \to L_p(\TT) \big\|_\cb.
    \]
\end{ltheorem}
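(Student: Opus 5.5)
The lower bound is the easier half, so I would do it first. Since $X$ has infinite diameter, it contains a geodesic ray or line; a ray suffices. Take vertices $x_0,x_1,x_2,\dots$ on a geodesic ray, so that $d(x_j,x_k)=|j-k|$. Restricting $S_M$ to matrices supported on $\{x_j\}\times\{x_j\}$ is a compression, and compressions do not increase the cb norm. This gives
\[
\big\|S_M\big\|_{\cb}\ \ge\ \big\|\big(m(|j-k|)\big)_{j,k\ge 0}\big\|_{\cb(S_p(\NN_0))},
\]
a Toeplitz Schur multiplier. Because the symbol is shift invariant, its cb norm on $S_p(\NN_0)$ equals its cb norm on $S_p(\ZZ)$, by approximating $\ZZ$ with finite translated intervals. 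Neuwirth--Ricard transference for the amenable group $\ZZ$ then identifies the Toeplitz Schur multiplier with the Fourier multiplier $T_m$ on $L_p(\TT)$. This yields $\|S_M\|_\cb\ge\|T_m\|_\cb$ with constant one.

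For the upper bound I would decompose the tree geometry in a way that reduces to Toeplitz pieces with controlled error. First, fix a boundary point (an end) $\omega$ of $X$, adding one if $X$ is finite. Let $h$ be the Busemann (horocycle) function, so that $d(x,y)=h(x)+h(y)-2h(x\wedge y)$, where $x\wedge y$ is the confluent point toward $\omega$. Second, write $m(d(x,y))$ as a function of the three quantities $h(x)$, $h(y)$ and $h(x\wedge y)$. The idea is that along horocyclic levels the radial symbol looks like a Toeplitz symbol composed with a martingale-type structure: the $\sigma$-algebras generated by the ``ancestor at level $n$'' form a filtration indexed by $n$. I would expand
\[
m(d(x,y))=\sum_n \mathbf 1_{h(x\wedge y)=n}\, m\big(h(x)+h(y)-2n\big).
\]
The indicator $\mathbf 1_{h(x\wedge y)=n}$ is a difference of conditional-expectation Schur multipliers $\mathbf 1_{h(x\wedge y)\ge n}-\mathbf 1_{h(x\wedge y)\ge n+1}$, each of which is a block-diagonal projection. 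For fixed $n$, the factor $m(h(x)+h(y)-2n)$ is a Hankel-type function of $(h(x),h(y))$. Flipping signs, $m(h(x)-n-(n-h(y)))$ is a Toeplitz symbol in the variables $h(x)-n$ and $n-h(y)$, so it is cb-bounded by $\|T_m\|_\cb$ via the same transference as above, tensored with identities.

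The main obstacle is summing over $n$. The pieces are cut out by martingale differences of a noncommutative filtration, and the Toeplitz symbols vary with $n$. I would handle this with two applications of noncommutative Burkholder--Gundy / Stein-type inequalities, or equivalently the boundedness of the triangular truncations associated with the filtration, one for the row structure and one for the column structure. Each application costs a factor $p_\ast$, which accounts for $p_\ast^2$. Concretely, I would express the Hankel/Toeplitz dependence through a single $\TT$-valued transference. Write $m(k)=\int \widehat{\ }$, embedding $S_M$ as a compression of $(T_m\otimes \mathrm{id})$ acting on the operator $\sum_n \Delta_n(A)\,u^{\,\cdot}$ twisted by characters $z^{h(x)-n}$ and $\bar z^{\,n-h(y)}$. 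The $n$-dependence of the twist would be controlled by a square-function estimate for the martingale differences $\Delta_n$ with respect to the filtration; this requires the UMD/noncommutative Khintchine constants of order $p_\ast$ on each side. I expect the delicate point to be making the sign flip rigorous: the symbol $m$ enters through $h(x)+h(y)$ rather than through a difference. I would first try to treat it with the even extension of $m$ together with a conjugation by the reflection $h(y)\mapsto 2n-h(y)$ inside each block.
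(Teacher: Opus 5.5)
Your lower bound is the paper's argument: restrict to a geodesic, pass from the Toeplitz multiplier on $\NN_0$ to $\ZZ$, and apply Neuwirth--Ricard. One step is false, though. Infinite diameter does not give a ray or a line when $X$ is not locally finite; glue paths of lengths $1,2,3,\dots$ at a common vertex for a counterexample. In general you must use arbitrarily long geodesic segments together with your finite-interval approximation, as the paper does.

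The upper bound has a genuine gap, exactly at the step you call the main obstacle. Write $S_M(A)=m(0)\,\EE_{\mathrm{diag}}(A)+\sum_n S_{M_n}(\mathbf{d}_n A)$ with $M_n(x,y)=m(h(x)+h(y)-2n)$. The bound $\sup_n\|S_{M_n}\|_{\mathrm{cb}}\le\|T_m\|_{\mathrm{cb}}$ does not let you sum over $n$. Burkholder--Gundy reduces you to row and column square functions, and you then need $(a_n)_n\mapsto(S_{M_n}a_n)_n$ to be bounded on those square-function spaces. This is an $R$-boundedness type condition, and it does not follow from uniform bounds on the individual pieces when $p\neq 2$. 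For instance, the column projections $M_n(x,y)=\mathbf{1}_{E_n}(y)$ all have norm one, yet the amplified symbol $(n,y)\mapsto\mathbf{1}_{E_n}(y)$ can be an arbitrary $0$--$1$ matrix. Your sketch never says why the condition holds here.

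It does hold. The amplified symbol $((n,x),(k,y))\mapsto m\big((h(x)-2n)+h(y)\big)$ is again a pullback of the Hankel symbol $m(a+b)$ on $\ZZ\times\ZZ$, and likewise for rows. Schur multipliers commute with the $\EE_k$, so the $S_{M_n}(\mathbf{d}_nA)$ are still martingale differences. With this observation, the two-sided Pisier--Xu inequality for $p\ge 2$ and duality for $p<2$ close your route with a constant of order $p_\ast^2$. The two factors come from the two directions of the square-function equivalence, not from rows versus columns. The ``sign flip'' you single out as delicate is a non-issue: $[m(a+b)]_{a,b}$ is the Toeplitz matrix $[m(a-b)]_{a,b}$ with its columns permuted by $b\mapsto -b$.

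The paper avoids square functions and Hankel pieces altogether. Your last paragraph gestures at its route but omits its two decisive steps.

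First comes a single uniform estimate, $\sup_{z\in\TT}\|\Delta_z\|_{\mathrm{cb}}\lesssim p_\ast$ for $\Delta_z(A)=[z^{d(x,y)}A_{x,y}]$. With a root $o$ (no end or Busemann function is needed), $d(x,y)=|x|+|y|-2|x\wedge y|$ gives $\Delta_z=\mathcal{U}_z\circ\mathcal{C}_{z^{-2}}$. Here $\mathcal{U}_z$ is two-sided multiplication by the diagonal unitary $\mathrm{diag}(z^{|x|})$. The map $\mathcal{C}_w(A)=[w^{|x\wedge y|}A_{x,y}]=\mathcal{D}_w\EE_{\mathrm{diag}}(A)+\sum_k w^k\mathbf{d}_k(A)$ is a martingale transform with unimodular coefficients, so one application of Burkholder's inequality suffices.

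Second, $S_M$ is not a bounded compression of $T_m\otimes\mathrm{id}$, since evaluation at a point of $\TT$ is unbounded on $L_p(\TT)$. Instead, $A=\Delta_{\bar z}\Delta_z(A)$ shows that $\Delta\colon A\mapsto(z\mapsto\Delta_z(A))$ is a $p_\ast$-quasi-isometry $S_p(X)\to L_p(\TT;S_p(X))$. The intertwining $\Delta\circ S_M=(T_m\otimes\mathrm{id})\circ\Delta$ then gives $\|S_M\|_{\mathrm{cb}}\lesssim p_\ast^2\|T_m\|_{\mathrm{cb}}$.

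Compared with your repaired route, this needs only the martingale transform inequality. It also yields the uniform phase estimate as a by-product.
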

 
In other words, in the completely bounded setting, radial Schur multipliers on unbounded trees are just even Fourier multipliers on the torus determined by their radial symbol. This clean statement was certainly unexpected for the way it fits in the representation theory of free groups and for its implications in harmonic analysis. The paper is organized around the following results.

\textbf{A. Spherical functions and Schur multipliers.} 
Given a tree $X$, in which a root $o \in X$ is fixed, we will say that a function $\varphi: X \to \CC$ is radial if it depends only on its distance to the root $o \in X$. The spherical functions of $X$ are those radial functions that are also formal eigenvectors of the normalized adjacency operator, see \cite[Definition 2.4]{HaagSteenSw2009}. When $X$ is homogeneous, as its degree increases, its spherical functions converge to $x \mapsto z^{d(o,x)}$, for $z \in \CC$. As such, radial symbols $z^{d(x,y)}$ can be understood as spherical functions for trees of infinite degree. Our main technical inequality gives the following uniform bound
\begin{equation} 
    \tag{Sph$_\TT$}
    \label{SphericalInequality}
    \sup_{|z| = 1} \Big\| \big[ z^{d(x,y)} A_{x,y} \big]_{x,y} \, \Big\|_{S_p(X)} \lesssim p_\ast \, \|A\|_{S_p(X)} \quad \mbox{for all} \quad 1 < p < \infty.
\end{equation}

In contrast with our result above, Pytlik/Szwarc proved in their celebrated paper \cite{PytlikSzwarc1986} that the $S_\infty$-analog of \eqref{SphericalInequality} holds for the free group with infinitely many generators iff $|z| < 1$ or $z = \pm 1$. They obtained this by expressing the spherical functions $g \mapsto z^{|g|}$ on $\FF_\infty$ as matrix coefficients 
\[
z^{|g|} = \big\langle \pi_z(g) \delta_e, \delta_e \big\rangle
\]
of an analytic family of bounded representations.  We refer to Haagerup/Steenstrup/Szwarc \cite{HaagSteenSw2009} for a full characterization of radial Schur multipliers on $S_\infty$ valid over homogeneous trees of finite degree.
%
Our result in $S_p$ is a counterpart to their results in $S_\infty$ that exhibits a striking change of regime at the boundary $|z|=1$. In line with this \textit{change of regime}, we see that radial $S_p$ multipliers on a tree coincide with $L_p$ multipliers on the torus for $1<p<\infty$, while for $p=\infty$ they form a proper subset of the multipliers on $\TT$.
The argument for \eqref{SphericalInequality} is completely new. It consists of reducing it to an instance of Burkholder's inequality for noncommutative martingale transforms \cite{PisierXu1997Martingales, Randrianantoanina2002, ParcetRandrianantoanina2006}. The link comes from a certain filtration---already used in \cite{MixedNorm2026} to deal with mixed-norm inequalities for free ball multipliers---which gives the fundamental identity below for the phase associated to the Gromov form
\[
    \big[ z^{|x \wedge y|} A_{x,y} \Big]_{x,y \in X} = \mathrm{diag} \big( z^{|x|} A_{x,x} \big) + \sum_{k\geq 0} z^{k} \mathbf{d}_{k} (A),
\]
where $\mathbf{d}_{k} = \EE_k - \EE_{k+1}$ denotes the martingale difference associated with the filtration, $x \wedge y$ is the last common vertex of the geodesics from the root $o$ to $x$ and $y$ respectively and $|x| = d(o,x)$ denotes the distance to said root.

Once \eqref{SphericalInequality} is obtained, Theorem \ref{Main Thm} follows by a natural intertwining identity between Schur and Fourier multipliers, see \eqref{eq:Intertwining}. None of these identities use the homogeneity of $X$ and thus they apply to general trees. However, our main theorem easily implies more technical generalizations of \eqref{SphericalInequality} where $X$ is a $(q+1)$-homogeneous tree, the functions $z^n$ are replaced by the spherical functions on said tree $\varphi_z$ and $z$ lives in the boundary of the ellipse described in \cite{HaagSteenSw2009}, see Theorem \ref{thmEllipsoid} and the preceding discussion for the details. 

\textbf{B. The failure of Fourier-Schur transference over free groups.} 
%
%
Schur multipliers are intimately connected with several aspects of harmonic analysis, among which we can highlight two. First, the pioneering work of Haagerup on free groups \cite{Haa1978} and subsequent research on rank 1 lattices \cite{CarHaa1985multipliers, CowHaa1989, Haagerup2016NonCBAP} strongly relied on Fourier and Schur $L_\infty$-approximations to produce analytic invariants of their group von Neumann algebras. In the development of this program, Schur multipliers were deployed as the main technical tool. Second, Krein's problem on operator Lipschitz functions was settled in \cite{PotapovSukochev2011} by solving Arazy's conjecture on Schur multipliers of divided differences---the first purely noncommutative models of singular integral operators. 

These research lines illustrate the large and growing impact of Schur multipliers in harmonic analysis and operator algebras---we refer to \cite{Parcet2026ICM} for recent developments and historical insights. Many of these rely, in one way or another, on the strong relation between Fourier and Schur multipliers. Indeed, given a bounded function on a discrete group $m: \Gamma \to \CC$, we will denote by $T_m$ its associated Fourier multiplier acting on the left regular von Neumann algebra $\L \Gamma$ and its noncommutative $L_p$-spaces. Similarly, its associated Herz-Schur multiplier $H_m$ is the Schur multiplier 
\[
    H_m := S_M, \, \text{ where } \, M(\gamma,\eta) = m(\gamma \eta^{-1}). 
\]
The definitions of both Fourier and Herz-Schur multipliers generalize to locally compact groups \cite{LaffSall2011, CasSall2015}. Since the work of Bo\.zejko/Fendler \cite{BozejkoFendler1984HSFourier} it has been known that the boundeness of Fourier and Herz-Schur multipliers coincides for general groups when $p=\infty$. In contrast, the situation is much more delicate for $p \in (1,\infty) \setminus \{2\}$. There, the argument of \cite{BozejkoFendler1984HSFourier} breaks down. However, in 2011 Neuwirth/Ricard \cite{NeuRic2011} showed that the same result holds true for amenable discrete groups. Indeed, their result claims that for any amenable discrete group $\Gamma$ and $1\leq p \leq \infty$ one has 
\begin{equation} 
    \tag{FS$_p$} 
    \label{FSTransference}
    \big\| H_m \colon S_p(\Gamma) \to S_p(\Gamma) \big\|_\cb = \big\| T_m \colon L_p(\mathcal{L}\Gamma) \to L_p(\mathcal{L}\Gamma) \big\|_\cb.
\end{equation}
That is, completely $S_p$-bounded Herz-Schur multipliers on $\Gamma$ can be isometrically identified with completely $L_p$-bounded Fourier multipliers on its group von Neumann algebra. This fundamental result was extended to l.c. groups by Caspers/de la Salle's \cite{CasSall2015}, but it left open whether the amenability assumption in \eqref{FSTransference} was necessary or just an artifact of the known proofs. The upper bound ($\leq$) is known to hold for nonamenable l.c. groups, see \cite[Theorem 4.2]{CasSall2015}, but the lower bound, that is the control of the Fourier multiplier norm by the Herz-Schur one, has resisted all attempts until now. Our counterexamples below imply that \eqref{FSTransference} fails if $\Gamma$ has a free subgroup. However, whether \eqref{FSTransference} implies amenability remains open.  

Although the general lower-bound problem for \eqref{FSTransference} on nonamenable groups has remained open for the last 15 years, there has been some progress on related questions. For instance, a local form of the lower bound for nonamenable groups was developed in \cite{ParcetSLn2018} and \cite{Parcet2024Idempotentschur}, with profound consequences in harmonic analysis on Lie groups \cite{Parcet2024Idempotentschur, CaspersJanssensKrishnaswamyUshaMiaskiwskyi2024}. This local form of Fourier-Schur transference is trivial for finite sets on discrete groups, although the precise bounds seem like a challenging quantitative problem outlined in Subsection \ref{SectFSTransfRatio} below. 

We start by showing the failure of Fourier-Schur transference on free groups with a couple of fairly natural examples. First, let $\FF_r$ be the free group on $r$ generators. Recall that any radial function on the free group induces a radial Herz-Schur multiplier on its Cayley graph with respect to the word length metric.
%
%
Write $B_R(w)$ for the ball of radius $R>0$ centered at $w \in \FF_r$. Then, the \emph{free ball multipliers} are defined by the symbol $m_R = \1_{B_R(e)}$. Next, pick $\rho \in \partial \FF_r$ in the hyperbolic boundary of $\FF_r$ and identify it with the geodesic path $(\rho_n)_{n \geq 1}$ starting at $e \in \FF_r$. The \emph{free horoball} $\HB_\rho \subset \FF_r$ associated to $\rho$ is given by 
\[
    \HB_\rho = \bigcup_{n \geq 0} B_n(\rho_n).
\]
Let $m_\rho = \1_{\HB_\rho}$ be the symbol given by the characteristic function of the free horoball. 
We will show in Lemma \ref{HoroballvsBall} that the uniform boundedness of the free ball multipliers on $L_p(\L \FF_r)$ is equivalent to the boundedness of the free horoball multiplier $T_{m_\rho}$. In sum, we get the following.

\begin{Btheorem}\label{CorFree}
    Consider the free group $\FF_r$ on $r$ generators with $r \in \{2, 3, \dots, \infty\}$:
    \begin{enumerate}[label={\rm \textbf{(\roman*)}}, ref={(\roman*)}]
        \item \label{itm:CorFree1}
        \textbf{Free balls.} The Herz-Schur multipliers $H_{m_R}$ are uniformly cb-bounded
        \[
            \sup_{R>0} 
            \big\| H_{m_R} \colon S_p(\FF_r) \to S_p(\FF_r) \big\|_\cb 
            \lesssim p_\ast^3 \quad \text{for every} \quad 1 < p < \infty.
        \]
        However, the Fourier multipliers $T_{m_R} \colon L_p(\L \FF_r) \to L_p(\L \FF_r)$ are not uniformly bounded when $|\frac1p - \frac12| > \frac16$.
        \item \label{itm:CorFree2}
        \textbf{Free horoballs.} Given $\rho \in \partial \FF_r$, the Herz-Schur multiplier
        $H_{m_\rho} \colon S_p(\FF_r) \to S_p(\FF_r)$ is cb-bounded. However, $T_{m_\rho} \colon L_p(\mathcal{L}\FF_r) \to L_p(\mathcal{L}\FF_r)$ is unbounded for $|\frac1p - \frac12| > \frac16$. 
    \end{enumerate}
\end{Btheorem}
The above result establishes the uniform (complete) boundedness of free ball multipliers in the Schur context. This contrasts starkly with the Fourier setting \cite{Kuhn1984Convergence, BoFen2006,} for $|\frac1p - \frac12| > \frac16$. In fact, free ball Fourier multipliers are expected to be non-uniformly bounded for every $p \neq 2$ ---in line with Fefferman's ball multiplier theorem \cite{Fef1971}--- but this remains an open problem related to difficulties of applying \emph{Fefferman's Kakeya obstruction} in the noncommutative setting, see \cite{Parcet2024Idempotentschur}. More information will be provided in Remark \ref{Rem-Curvature}.

Theorem \ref{CorFree} is enough to establish that Fourier-Schur transference fails in the range $|\frac1p - \frac12| > \frac16$. However, due to our main theorem, explicit radial counterexamples are now easy to manufacture. Consider the spherical functions in $\FF_\infty$, given by 
\[
    m_z(g) = z^{|g|} \; \text{ for } z \in \TT
\] 
and their associated Herz-Schur multipliers, which are just the radial Schur multipliers in \eqref{SphericalInequality}. This family provides radial counterexamples for every nontrivial exponent $p$. 

\begin{Btheorem}
    \label{corFree2}
    Let $z \in \TT \setminus \{\pm 1\}$. For any $1 < p \neq 2 < \infty$ we have that $H_{m_z}$ is a completely bounded Herz-Schur multiplier while the Fourier multiplier $T_{m_z}$ is unbounded on $L_p(\L \FF_r)$. 
\end{Btheorem}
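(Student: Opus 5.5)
The plan has two halves: the Schur bound comes from Theorem \ref{Main Thm}, and the Fourier unboundedness from the known theory of spherical representations of free groups.

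\textbf{Boundedness of $H_{m_z}$.} The Cayley graph of $\FF_r$ with respect to the free generators is a tree $X$ (homogeneous of degree $2r$, or of infinite degree when $r=\infty$). Its graph distance is $d(g,h)=|gh^{-1}|$, or $|g^{-1}h|$ depending on the side convention, which changes nothing after the relabeling $g\mapsto g^{-1}$. Hence $H_{m_z}=S_M$ with $M(g,h)=z^{d(g,h)}$, a radial symbol with $m(n)=z^{|n|}$. By Theorem \ref{Main Thm}, $\|H_{m_z}\|_\cb\lesssim p_\ast^2\|T_m\colon L_p(\TT)\to L_p(\TT)\|_\cb$. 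We also have $T_m=\tfrac12(\Phi+\Phi^\ast)$-type combinations: writing $z^{|n|}=z^n\1_{n\ge0}+\bar z^{\,n}\1_{n<0}$, the multiplier is a sum of translated Riesz (half-line) projections, namely modulation by $z$ composed with the analytic projection. These are completely bounded on $L_p(\TT)$ with norm $\lesssim p_\ast$. Alternatively, one simply invokes \eqref{SphericalInequality} directly, which already gives the bound $\lesssim p_\ast$.

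\textbf{Unboundedness of $T_{m_z}$ on $L_p(\L\FF_r)$ for $p\neq2$.} By duality, and since $m_z$ is real-symmetric up to conjugation, it suffices to treat $p>2$. Because $\FF_r\subset\FF_\infty$ and $\FF_\infty\subset\FF_2$, restriction of Fourier multipliers to subgroups is contractive on $L_p$, so it is enough to work in one fixed free group; I would take $\FF_2$ if convenient, or $\FF_r$ directly. The strategy is to test $T_{m_z}$ on radial elements, namely the sphere sums $\chi_n=\sum_{|g|=n}\lambda(g)$, on which $T_{m_z}\chi_n=z^n\chi_n$. The $L_p$ norms of radial elements are governed by Haagerup-type estimates and by the spherical functions $\varphi_w$ of the $(2r)$-homogeneous tree, which are bounded in $L_p$ precisely when $w$ lies in an explicit ellipse $\mathcal E_p$ whose degeneracy is controlled by $|\frac1p-\frac12|$.

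Concretely, I would argue by contradiction. Boundedness of $T_{m_z}$ on $L_p$ would, by interpolation with $L_2$ (where it is trivially bounded) and by complex powers, give boundedness of $T_{m_{z^k}}$ for all $k$. It would then give boundedness of averaged multipliers $\sum_k c_k z^{k|g|}$. Since $z\neq\pm1$, the orbit $\{z^k\}$ has accumulation points off $\{\pm1\}$, so approximations of arbitrary bounded functions of $|g|$ would be bounded multipliers, in particular radial functions of the form $n\mapsto\1_{n\le R}$ uniformly. That would contradict the known failure of uniform boundedness of free ball projections, which holds only for $|\frac1p-\frac12|>\frac16$, so this route does not reach all $p\neq2$.

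The cleaner route, and the one I would pursue, is the one behind Pytlik--Szwarc and the analytic family of uniformly bounded representations. For $|z|=1$, $z\neq\pm1$, the function $g\mapsto z^{|g|}$ is not an $L_p$ Fourier multiplier for $p\ne2$. One shows this by computing $T_{m_z}$ on a well-chosen radial element: take $x=\sum_n a_n\chi_n/\|\chi_n\|_2$ with a Haagerup-inequality comparison giving $\|x\|_p\approx$ a weighted $\ell_2$ norm only for nice coefficients. The relevant fact is that radial multipliers bounded on $L_p(\L\FF_r)$ must extend analytically, in the spectral parameter, to a strip or ellipse interior depending on $p$; this is the $L_p$ analogue of the Haagerup--Steenstrup--Szwarc characterization, via Pytlik--Szwarc interpolation between $p=2$ and $p=\infty$. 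The sequence $z^n$ corresponds to a spectral function with a pole-type singularity on the boundary, which violates this analyticity, giving the contradiction.

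\textbf{Main obstacle.} This last step, proving a genuine $L_p$ Fourier unboundedness for every $p\neq2$, is the one I expect to be hardest and the most likely to need a specific later lemma. I would try first to reduce it to the spherical multiplier $\varphi_w$ being unbounded for $w$ outside the $L_p$-ellipse.
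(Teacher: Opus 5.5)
Your Schur half is correct and is what the paper does: after the relabeling $g\mapsto g^{-1}$, $H_{m_z}$ is the multiplier $\Delta_z$ on the Cayley tree. So \eqref{SphericalInequality} (Lemma~\ref{KeyLemma} plus the diagonal unitaries in $\Delta_z=\mathcal{U}_z\circ\mathcal{C}_{z^{-2}}$) gives a cb-bound $\lesssim p_\ast$. Your detour through Theorem~\ref{Main Thm} also works, since $n\mapsto z^{|n|}$ combines Riesz projections with rotations of $\TT$ (translations, not modulations), though the constant becomes $p_\ast^3$. The duality reduction and working in $\FF_2$ are fine as well. Unboundedness passes up from $\FF_2$ to $\FF_r$ because $\FF_2$ is a free factor with compatible word length; the inclusion $\FF_\infty\subset\FF_2$ does not preserve word length and plays no role.

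The genuine gap is that the Fourier half is not proved. The ``relevant fact'' that $L_p$-bounded radial Fourier multipliers on $\L\FF_r$ must extend analytically to a $p$-dependent ellipse is neither stated precisely nor available. The tools you name also cannot produce such a necessary condition:
\begin{itemize}
\item Complex interpolation between $p=2$ and $p=\infty$ yields only sufficient conditions for boundedness, never obstructions.
\item Haagerup's inequality only bounds $\|x\|_p$ from above by a weighted $\ell_2$-norm of the coefficients. Combined with the trivial lower bound $\|T_{m_z}x\|_p\ge\|T_{m_z}x\|_2=\|x\|_2$ for $p>2$, it gives ratios at most one, so no blow-up.
\item $z^{|g|}$ is not a spherical function of the $2r$-regular tree. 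Reducing to $\varphi_w$ ``outside an $L_p$-ellipse'' would need an extra transform you do not supply, on top of an unproved ellipse theorem.
\end{itemize}
Your first route fails for a more basic reason: $\|T_{m_{z^k}}\|\le\|T_{m_z}\|^k$ is not uniform in $k$, so no uniform bounds for averaged symbols follow.

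The missing idea is an exact computation against the Plancherel measure of the radial subalgebra.
\begin{itemize}
\item $\A_\rad$ is abelian, the inclusion $\A_\rad\subset\L\FF_r$ is trace preserving, and $T_{m_z}$ preserves $\A_\rad$. So boundedness of $T_{m_z}$ implies boundedness on $L_p(\A_\rad)\cong L_p([-1,1],\nu_\tau)$.
\item Here $\nu_\tau$ is Kesten's measure. Its density is comparable to $\sin^2\theta\,d\theta$ under $s=\cos\theta$, and $T_{m_z}$ becomes $\mathcal{J}_z\colon Q_n\mapsto z^nQ_n$.
\item Test on the generating functions $\G_u=\sum_n u^nQ_n$. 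These are explicit rational functions with denominator $1-2su+u^2=(1-ue^{i\theta})(1-ue^{-i\theta})$, and $\mathcal{J}_z\G_\beta=\G_{z\beta}$.
\item For $z=e^{i\alpha}\neq\pm1$, the pole of $\G_{\beta e^{i\alpha}}$ approaches the interior point $\cos\alpha$, where the density is positive. This gives $\|\G_{z\beta}\|_p\simeq(1-\beta)^{1/p-1}$.
\item For real $\beta$, the pole approaches the endpoint $s=1$, where the density vanishes quadratically in $\theta$. This gives $\|\G_\beta\|_p\simeq(1-\beta)^{3/p-2}$ for $p>3/2$, and at most logarithmic growth for $p\le 3/2$ (Lemma~\ref{prop:generating}).
\item Hence $\|\G_{z\beta}\|_p/\|\G_\beta\|_p\to\infty$ for $1<p<2$. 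For $p>2$, use $\mathcal{J}_z\G_{\bar z\beta}=\G_\beta$, whose ratio is $(1-\beta)^{2/p-1}\to\infty$.
\end{itemize}
This dichotomy (an interior pole versus an edge pole where the Plancherel density vanishes) is the concrete form of the ``singularity on the boundary'' heuristic you gesture at. It is what your proposal needs to become a proof.
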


The Theorem above fully settles that \eqref{FSTransference} fails for discrete groups that contain a free subgroup. However, there are now very well known examples of non-amenable groups that do not contain a free group, see \cite{Olshanskii1980InvariantMean, OlshanskiiSapir2003TorsionByCyclic, Bartholdi2018Chapter}. Thus, the problem of determining whether the class of groups exhibiting Fourier-Schur transference contains just the amenable groups is still open.


\textbf{C. Algebraic groups and $\sK$-biinvariant multipliers} 
The work of Haagerup and coauthors on $L_\infty$-approximation theory \cite{Haa1978, CarHaa1985multipliers, CowHaa1989} has been continued by Lafforgue/de Laat/de la Salle \cite{LaffSall2011,LaatSalle2018Approximation} with the failure of $L_p$-approximations for high rank lattices. These more recent results have laid out a program that exploits harmonic analysis connections to attack the elusive rigidity properties of higher rank group von Neumann algebras \cite{ParcetSLn2018, Salle2022ICM, Parcet2024Idempotentschur, Salle2026Kakeya,Parcet2026ICM}.
They showed that, over higher rank algebraic simple groups, there aren't sequences of compactly supported Herz-Schur multipliers approximating the identity. This, together with Schur restriction properties and the upper bound ($\le$) in \eqref{FSTransference}, gives an obstruction to the CBAP in $L_p$. 

Whether the existence of approximations by Schur multipliers can be used to ensure the CBAP is an open problem that would be implied by the validity of Fourier-Schur transference. 
Over algebraic groups, approximations by compactly supported multipliers can always be averaged to be $\sK$-biinvariant. Thus, whether Fourier-Schur transference holds for $\sK$-biinvariant multipliers in this context became a question of utmost importance. In addition, the fact that the $\sK$-biinvariant functions form an Abelian algebra and that the algebraic groups involved are of type I further gave experts in the area hope of a positive solution. Our Theorem \ref{CorSimple} below provides $\sK$-biinvariant counterexamples in the easiest algebraic simple groups. Thus, if a relationship between Schur and Fourier approximation properties exists, it has to be more subtle, see Section \ref{Sect:Problems}.

Let us first recall some terminology related to the algebraic groups $\PGL_2(\QQ_q)$ for some prime number $q$ and $\PSL_2(\RR)$. The subgroup $\PGL_2(\ZZ_q)$, where $\ZZ_q$ is the ring of unit length elements, is maximally compact in $\PGL_2(\QQ_q)$ and the associated Cartan length $\ell_\mathrm{Car} \colon \PGL_2(\QQ_q) \to \RR_+$ is given by the relation $\ell_\mathrm{Car}(g)=n$, where $n \in \NN_0$ is uniquely determined by the Cartan decomposition 
\[
    g = 
    k_1 \, 
    \begin{bmatrix} 
        q^n & 0 \\ 
        0 & 1
    \end{bmatrix} 
    \, k_2 \quad \mbox{with} \quad k_1, \, k_2 \in \PGL_2(\ZZ_p).
\]

Next, consider the  lattice $\Gamma = \PSL_2(\ZZ) = \big\langle a,b \colon a^2 = b^3 = 1 \big\rangle = \ZZ_2 * \ZZ_3$ in $\sG = \PSL_2(\RR)$. Equip $\Gamma$ with the displacement length of the action on its Bass-Serre tree $T$, given by $\ell_T(\gamma) = d(o, \gamma \cdot o)$. Applying the arguments in Theorem \ref{Main Thm} to $T$ we get
\[
    m_z(\gamma) = z^{\ell_T(\gamma)}
\]
induces uniformly bounded Herz-Schur multipliers on $S_p(\Gamma)$ for $|z|= 1$. Now, let $\Omega = \Gamma \backslash \sG$ and denote its normalized $\sG$-invariant probability measure by $\mu$. Fix a measurable section $\sigma: \Omega \to \sG$ associated with the standard fundamental domain of the modular action, whose right cocycle $\beta \colon \Omega \times \sG \to \Gamma$ is determined by $\sigma(w) g = \beta(w,g) \sigma(w g)$. Then, the cocycle amplifications are given by 
\begin{equation*}
    \widehat{m}_z(g) = \int_\Omega m_z(\beta(w,g)) \, d\mu(w) \quad \mbox{for} \quad g \in \sG.
\end{equation*}
We know from \cite{WangXiaYao2025} that the symbols $\widehat{m}_z \colon \PSL_2(\RR) \to \RR_+$ are $\mathrm{PSO}(2)$-biinvariant continuous functions. Using a combination of techniques from probability and continuous fractions,  we will show that they fail to be uniformly bounded. Thus, there are $\sK$-biinvariant counterexamples to Fourier-Schur transference over simple nonamenable algebraic groups like $\PSL_2(\RR)$ and $\PGL_2(\QQ_q)$. 


\addtocounter{ltheorem}{1}
\begin{ltheorem}
    \label{CorSimple}
    The following results hold for $1 < p \neq 2 < \infty \hskip-1pt \colon$
    \begin{enumerate}[label={\rm \textbf{(\roman*)}}, ref={{(\roman*)}}]
        \item \label{itm:CorSimple1} 
        \textbf{Explicit multipliers in $\sG = \PGL_2(\QQ_q)$.} Let $z \in \TT \setminus \{\pm 1\}$ and $m_z(g) = z^{\ell_\mathrm{Car}(g)}$. Then $m_z$ is $\sK = \PGL_2(\ZZ_p)$-biinvariant, its Herz-Schur multiplier $H_{m_z}$is completely bounded on the Schatten $p$-class $S_p(\sG)$, while $T_{m_z}$ is unbounded on $L_p(\L \sG)$ as a Fourier multiplier. 

        \item \label{itm:CorSimple2}
        \textbf{Cocycle amplification in $\sG = \PSL_2(\RR)$ of phase shifts.} Let $z \in \TT$ and ${\widehat{m}_z}$ be the cocycle amplification above. Then $\widehat{m}_z$ is $\mathrm{PSO}(2)$-biinvariant, the family of Herz-Schur multipliers $H_{\widehat{m}_z}$ are uniformly completely bounded on $S_p(\sG)$, while 
        \[
            \sup_{|z|=1} \big\| T_{\widehat{m}_z}\colon L_p(\L \sG) \to L_p(\L \sG) \big\| = \infty.
        \] 
    \end{enumerate}
\end{ltheorem}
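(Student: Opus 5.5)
Part (i) goes through the Bruhat--Tits tree. The group $\sG=\PGL_2(\QQ_q)$ acts transitively on the vertices of its Bruhat--Tits tree $X_q$, which is $(q+1)$-regular, with vertex stabilizer $\sK$. Hence $\ell_\mathrm{Car}(g)=d(o,g\cdot o)$, and $m_z$ is automatically $\sK$-biinvariant. For the Schur side, we use the standard transference from $S_p(\sG)$ (acting on $L_2(\sG)$) to $S_p(X_q)$ for symbols of the form $M(g,h)=F(g\cdot o,h\cdot o)$. Write $L_2(\sG)\simeq L_2(X_q)\otimes L_2(\sK)$ via a section, so that $H_{m_z}$ becomes $S_{M}\otimes \Id$ with $M(x,y)=z^{d(x,y)}$, using $d(g o,ho)=d(o,h^{-1}g o)$. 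Its cb-norm is then bounded by $\|S_M\|_{\cb}$ on $S_p(X_q)$. Theorem~\ref{Main Thm}, or directly \eqref{SphericalInequality}, bounds this by $p_\ast^2\|T_{z^{|n|}}\|_{\cb}$. The even multiplier $n\mapsto z^{|n|}$ on $\TT$ is a combination of modulation, the Riesz projection and the identity, so it is bounded for $1<p<\infty$.

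For the Fourier side of (i), we transfer unboundedness from a discrete subgroup. Choose a free subgroup $\FF_2\subset \sG$ acting freely on $X_q$, for instance a Schottky subgroup generated by two hyperbolic elements in general position. Its orbit map is then a quasi-isometric embedding, and it is in fact isometric onto a $4$-regular subtree if we take generators of translation length $1$ whose axes meet at a vertex transversally. Restriction of Fourier multipliers to closed subgroups, valid for $L_p$ Fourier multipliers on l.c.\ groups by the de Leeuw-type results of Caspers--de la Salle, then gives $\|T_{m_z}\|_{L_p(\L\sG)}\ge \|T_{m_z|_{\FF_2}}\|_{L_p(\L\FF_2)}$. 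Since $m_z|_{\FF_2}(g)=z^{|g|}$, Theorem~\ref{corFree2} shows that the right-hand side is infinite.

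Part (ii) runs along the same lines. $\Gamma=\ZZ_2*\ZZ_3$ acts on its Bass--Serre tree $T$ with finite stabilizers. The argument of Theorem~\ref{Main Thm}, namely \eqref{SphericalInequality} for $T$ together with finite-stabilizer bookkeeping, gives $\sup_{|z|=1}\|H_{m_z}\|_{\cb,S_p(\Gamma)}\lesssim p_\ast$. Cocycle amplification is a Schur-multiplier induction: $H_{\widehat m_z}$ is obtained from $H_{m_z}\otimes\Id_{L_\infty(\Omega)}$ by conjugating with the unitary $L_2(\sG)\simeq \ell_2(\Gamma)\otimes L_2(\Omega)$ given by $\sigma$, followed by a conditional expectation (averaging in $w$). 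This yields the uniform bound. Biinvariance is quoted from \cite{WangXiaYao2025}.

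For unboundedness in (ii), we argue by contradiction. If the $T_{\widehat m_z}$ were uniformly bounded, restricting to a lattice would be useless, but we may use the $\mathrm{PSO}(2)$-biinvariant spherical structure. Since $\widehat m_z$ is determined by a function of the hyperbolic distance $t=d_{\HH}(i,g i)$, uniform $L_p$ boundedness would force the symbols to be uniformly bounded spherical-Fourier multipliers. In particular it would force a uniform bound on the restriction to a free Schottky subgroup $\FF_2\subset\PSL_2(\RR)$ with large translation lengths, where $\widehat m_z(g)\approx\EE_w\, z^{\ell_T(\beta(w,g))}$. The key step, and the main obstacle, is to understand the distribution of $\ell_T(\beta(w,g))$ as $d_{\HH}(i,gi)=t\to\infty$. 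Via the continued-fraction coding of geodesics on the modular surface, this length is a sum of partial quotients along a geodesic segment of length $t$, i.e.\ an ergodic sum for the Gauss map. I would first try to prove a central-limit-type statement, $\ell_T(\beta(w,g))\approx c\,t+O(\sqrt t)$ with Gaussian fluctuations. Granting this, $\widehat m_z(g)\approx z^{ct}e^{-\sigma^2\theta^2 t/2}$ for $z=e^{i\theta}$, so choosing $\theta\sim t^{-1/2}$ gives symbols that look like $e^{i\lambda t}$ times a nondegenerate profile on larger and larger scales. Along the Schottky subgroup these approximate $g\mapsto e^{i\lambda|g|}$. A limiting (ultraproduct/weak-$*$) argument combined with Theorem~\ref{corFree2} then yields $\sup_z\|T_{\widehat m_z}\|=\infty$. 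The delicate points are controlling the heavy tails of the partial quotients, which have infinite mean under the Gauss measure, so one may need a stable-law rather than a Gaussian limit, and making the passage from approximate symbols to norm blow-up rigorous.
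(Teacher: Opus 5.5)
Your Schur-side arguments are fine and match the paper's. These are the splitting $S_p(\sG)\simeq S_p(X_q)\otimes_p S_p(\sK)$ reducing $H_{m_z}$ to $\Delta_z$ on the Bruhat--Tits tree, the pullback along the orbit map $\gamma\mapsto\gamma\cdot o$ into the Bass--Serre tree (which absorbs the finite stabilizers), and the stability of Schur multipliers under cocycle induction.

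The genuine gap is on the Fourier side, in both parts. You bound $\|T_{m}\|$ on $L_p(\L\sG)$ from below by the norm of the restricted multiplier on $L_p(\L\FF_2)$ for a free discrete subgroup $\FF_2\le\sG$, and no such de Leeuw restriction theorem is available. The transference-based route (Caspers--de la Salle) needs Fourier--Schur transference on the subgroup, which is exactly what Theorem \ref{corFree2} shows fails on $\FF_2$. Going through Schur norms only gives $\|T_{m_z}\|_\cb\ge\|H_{m_z|_{\FF_2}}\|_\cb$, and the right-hand side is finite. The noncommutative de Leeuw theorem of \cite{CasParPerrRic2014} needs the $[\mathrm{SAIN}]_\Gamma$ property. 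The paper notes at the start of the section on non-discrete groups that this property fails for pairs like $\FF_2\le\SL_2(\RR)$, and it lists restriction to discrete subgroups as an open problem in the final section, where it even expects counterexamples. The missing idea is to stay inside $\sG$ and use the abelian corner $p_\sK\,\L\sG\,p_\sK$. A $\sK$-biinvariant Fourier multiplier preserves its $L_p$-space, and the spherical transform identifies that space with $L_p$ of the spherical Plancherel measure. For (i) this turns $T_{m_z}$ into $P_n\mapsto z^nP_n$ on $L_p([-2,2],\mu_q)$. One then concludes by comparing the norms of the generating functions $\sum_n u^nP_n$ at $u=\beta$ and $u=z\beta$ as $\beta\to1^-$, as in Lemma \ref{prop:generating}. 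Incidentally, for $q=2$ the tree is $3$-regular, so your isometrically embedded $4$-regular subtree does not exist.

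For (ii), besides the same restriction problem, the heuristic is off in two further ways.
\begin{itemize}
\item The scaling is wrong. The partial quotients have infinite mean, so there is no $ct+O(\sqrt t)$ Gaussian regime. The correct statement (Proposition \ref{prp:cocycleWeakLaw}) is the weak law $\ell_T(\beta(\cdot,a_t))/(t\log t)\to12/\pi^2$ in probability. It comes from Khintchine's weak law for $\sum_{j\le n}d_j$ together with Birkhoff's theorem for the return times $-2\log x_j$ to Arnoux's cross-section. The right phases are then $z_j=e^{i\pi^2/(12R_j\log R_j)}$, which give the pure-phase limit $\widehat m_{z_j}(a_{R_js})\to e^{is}$. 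With your choice $\theta\sim t^{-1/2}$, the phase $\theta\,\ell_T$ has unbounded spread and produces no limiting profile.
\item On a Schottky subgroup, $d_{\HH}(i,g\,i)$ is only quasi-isometric to the word length. So the restricted symbols are not functions of $|g|$, and Theorem \ref{corFree2} would not apply even if restriction were available.
\end{itemize}
The paper avoids free groups altogether and instead rescales the spherical transform, using $R^{-1}e^{Rs/2}\varphi_{\xi/R}(Rs)\to 2\sin(\xi s)/(\pi\xi)$. Uniform boundedness of the spherical multipliers of $\widehat m_{z_j}$ then forces boundedness of the radial Euclidean wave-type operator $U_\infty^{-1}M_{e^{is}}U_\infty$ on $L_p(\lambda^2\,d\lambda)$. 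That operator is shown to be unbounded for $p\neq2$ with explicit bump functions, via the translation identity \eqref{eq:translation-identity}.
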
 

\section{Radial Schur multipliers on trees} \label{Sect1}

\subsection{Trees and martingale transforms}
Let us briefly review a noncommutative martingale inequality which will be crucial in our argument. Given a von Neumann algebra $\M$, let $(\M_n)_{n \ge 0}$ be an increasing filtration---a sequence of von Neumann subalgebras such that $\bigcup_{n \ge 0} \M_n$ is w$^*$-dense in $\M$. Let $\mathbf{E}_n$ be the trace-preserving conditional expectation onto $\M_n$. Noncommutative $L_p$-martingales in this filtration are sequences $(x_n)_{n \ge 0}$ such that $x_n \in L_p(\M_n)$ and $\mathbf{E}_m(x_n) = x_m$ for $m \le n$. Set $\mathbf{d}_k(x) = \mathbf{E}_k(x) - \mathbf{E}_{k-1}(x)$ for $k \ge 1$ and $\mathbf{d}_0 = \mathbf{E}_0$ the martingale differences, so that we recover 
$x$ as a telescopic sum $\sum_k \mathbf{d}_k(x)$. In their pioneering paper \cite{PisierXu1997Martingales}, Pisier and Xu established a noncommutative extension of Burkholder-Gundy's inequality for the martingale square function \cite{BurkholderGundy1970}. By noncommutativity, this involves two forms of square functions---already in Lust-Piquard's work on noncommutative Khintchine inequalities \cite{Lust1986}---but we omit details here since they will not play any role below. A direct consequence is a noncommutative extension of Burkholder's inequality for martingale transforms 
\begin{equation} 
    \label{MartTransforms}
    \Big\| \sum_{k \ge 0} \alpha_k \mathbf{d}_k(x) \Big\|_{L_p(\M)} \lesssim \, p_\ast \, \Big( \sup_{k \ge 0} |\alpha_k| \Big) \, \Big\| \sum_{k \ge 0} \mathbf{d}_k(x) \Big\|_{L_p(\M)}  
\end{equation}
for any bounded sequence of scalars $(\alpha_k)_{k \ge 0}$. Pisier/Xu's original argument did not provide the optimal behavior of the constants shown above. Instead, this was found by Randrianantoanina in \cite{Randrianantoanina2002} with a remarkable proof of the noncommutative weak type (1,1) endpoint. We also refer to \cite{ParcetRandrianantoanina2006} for a simpler argument using a noncommutative form of Gundy's decomposition for martingale differences. It should be mentioned that inequality \eqref{MartTransforms} is still valid (using the same arguments) for martingales associated to a decreasing filtration.

\textit{The link with Schur multipliers. }
Given a $\sigma$-finite measure space $(\Omega,\mu)$, we shall write $S_p(\Omega)$ for the Schatten $p$-class of compact operators $A$ on $L_2(\Omega,\mu)$ with $\Tr |A|^p$ finite. When $(\Omega,\mu)$ is a discrete set with the counting measure and $M \colon \Omega \times \Omega \to \CC$ is bounded, the corresponding Schur multiplier acts on matrices by entrywise multiplication 
\[
    S_M(A) = \Big[ M(j,k) A_{j,k} \Big]_{j,k \in \Omega}.
\]
This is well-defined in the Hilbert-Schmidt class and we call it a Schur $S_p$-multiplier when it extends boundedly to the Schatten class $S_p(\Omega)$. Next, $S_M$ is completely $S_p$-bounded when additionally $S_M \otimes \mathrm{id}_{S_p(\Gamma)}$ extends to a bounded map on $S_p(\Omega \times \Gamma)$ for any countable index set $\Gamma$. Similar definitions apply when $\Omega$ is a $\sigma$-finite measure space using the density in $S_p(\Omega)$ of bounded linear maps on $L_2(\Omega,\mu)$ admitting a kernel, precise definitions can be found in \cite{LaffSall2011,Parcet2026ICM}.  Our measure space $(\Omega, \mu)$ in the first two sections of this paper will be a tree $X$ equipped with the graph metric $d$ and the counting measure. Let us fix a root in our tree $o \in X$ and set $|x| = d(x,o)$. Since $X$ is a tree, there exists a unique geodesic segment $[o,x]$ of length exactly $|x|$ for every $x \in X$. For each pair $(x,y)\in X\times X$, we will write $x \wedge y \in X$ for the last common ancestor of $x$ and $y$, that is, the unique vertex satisfying that the geodesic segments $[o,x]$ and $[o,y]$ intersect exactly in $[o,x\wedge y]$. 
For each $z\in \TT$, define the Schur multiplier
\[
    \C_z(A) = \Big[ z^{|x \wedge y|} A_{x,y} \Big]_{x,y \in X}.
\]

\begin{lemma} \label{KeyLemma}
If $1 < p < \infty$, we have
\[
    \sup_{z \in \TT} \big\| \, \mathcal{C}_z \colon S_p(X) \to S_p(X) \big\|_\cb \lesssim p_\ast.
\]
\end{lemma}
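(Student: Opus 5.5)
The plan is to realize $\mathcal{C}_z$ as a noncommutative martingale transform with respect to a decreasing filtration of von Neumann subalgebras of $\B(\ell_2(X))$, and then apply \eqref{MartTransforms} with coefficients $\alpha_k = z^k$, which satisfy $\sup_k |\alpha_k| = 1$.

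\textbf{The filtration.} For $k \ge 0$, partition $X$ into the sets $\{x\}$ with $|x| < k$, together with the cones $X_v = \{x : v \in [o,x]\}$ for $|v| = k$. Let $\M_k$ be the von Neumann subalgebra of $\B(\ell_2(X))$ consisting of operators that are block diagonal with respect to this partition, and let $\EE_k$ be the corresponding trace-preserving conditional expectation. Concretely, $\EE_k(A)_{x,y} = A_{x,y}$ if $x,y$ lie in the same block and $0$ otherwise. The pair $(x,y)$ with $x \ne y$ lies in a common block of level $k$ exactly when $|x\wedge y| \ge k$. Hence the partitions refine as $k$ increases, the algebras $\M_k$ decrease, and $\M_0 = \B(\ell_2(X))$. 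The intersection $\bigcap_k \M_k$ is the diagonal algebra, and $\EE_k(A) \to \mathrm{diag}(A)$ in $S_p$ for finitely supported $A$.

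\textbf{The identity.} Set $\mathbf{d}_k = \EE_k - \EE_{k+1}$. Then $\mathbf{d}_k(A)$ keeps exactly the off-diagonal entries with $|x\wedge y| = k$. This gives
\[
A = \mathrm{diag}(A) + \sum_{k\ge0}\mathbf{d}_k(A), \qquad \mathcal{C}_z(A) = \mathrm{diag}\big(z^{|x|}A_{x,x}\big) + \sum_{k\ge0} z^k\mathbf{d}_k(A),
\]
where we used that $x\wedge x = x$. The diagonal term is handled by noting that it is $\EE_\infty$ composed with multiplication by the diagonal unitary $u = \mathrm{diag}(z^{|x|})$. To avoid treating it separately, we rewrite the whole multiplier in a single form. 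Put $B = u^{*}\,\mathcal{C}_z(A)$ acting on the diagonal piece only; alternatively, we bound $\|\mathrm{diag}(u A)\|_p \le \|A\|_p$ directly by contractivity of $\EE_\infty$.

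\textbf{The estimate.} Applying \eqref{MartTransforms} in its version for decreasing filtrations (a reverse martingale ending at the diagonal) gives
\[
\Big\|\sum_k z^k \mathbf{d}_k(A)\Big\|_p \lesssim p_\ast \,\|A - \mathrm{diag}(A)\|_p \le 2p_\ast\|A\|_p .
\]
Adding the diagonal term yields $\|\mathcal{C}_z(A)\|_p \lesssim p_\ast \|A\|_p$. To get complete boundedness, we repeat the argument on $\B(\ell_2(X)) \bar\otimes \B(\ell_2(\Gamma))$ with the filtration $\M_k \bar\otimes \B(\ell_2(\Gamma))$. The same identities hold for $\mathcal{C}_z\otimes \Id$, and \eqref{MartTransforms} applies verbatim since it holds in any semifinite von Neumann algebra.

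\textbf{Main obstacle.} The delicate points are the infinite-dimensional and limiting issues. First, $\B(\ell_2(X))$ with its trace is semifinite, and \eqref{MartTransforms} must be used for reverse martingales whose terminal algebra is the diagonal algebra. I would first prove the bound for finitely supported $A$, where only finitely many $\mathbf{d}_k$ are nonzero: if $A$ is supported in the ball of radius $N$, then $\mathbf{d}_k(A)=0$ for $k \ge N$. This makes it a finite martingale in a finite filtration, and density then extends the bound to all of $S_p$. Second, one must check that block-diagonal compressions are genuinely trace-preserving conditional expectations. This holds because they are sums $\sum_{\text{blocks }P} PAP$ of compressions by mutually orthogonal projections.
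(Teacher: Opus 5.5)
Your proposal is correct and is essentially the paper's proof. You use the same partition $\mathcal{P}_k$ (your cones $X_v$ with $|v|=k$, plus singletons below level $k$, are exactly the classes of $\sim_k$), the same identity $\mathcal{C}_z(A)=\mathcal{D}_z\,\EE_{\diag}(A)+\sum_{k\ge0} z^k\mathbf{d}_k(A)$, the decreasing-filtration version of \eqref{MartTransforms} with $\alpha_k=z^k$, and cb-bounds by amplification. Your reduction to finitely supported $A$ (a finite reverse martingale, then density) makes explicit what the paper leaves implicit. The aside about $B=u^{*}\mathcal{C}_z(A)$ is garbled, but it is harmless because you then bound the diagonal term directly by contractivity of $\EE_{\diag}$.
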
 

\begin{proof} 
Given $k\geq 0$, consider the equivalence relation
\[
x\sim_k y \ \Leftrightarrow \ x=y \ \text{or } |x\wedge y| \geq k.
\]
Let $\mathcal{P}_k$ be the partition of $X$ induced by $\sim_k$ and define the map
\[
\EE_k(A) = \sum_{\Sigma \in \mathcal{P}_k} P_\Sigma A P_\Sigma = \Big[ \mathbf{1}_{x\sim_k y}\,A_{x,y} \Big]_{x,y},
\]
where $P_\Sigma$ is the orthogonal projection from $\ell_2(X)$ onto $\ell_2(\Sigma)$. The map $\EE_0$ is just the identity and each $\EE_k$ defines a conditional expectation onto decreasing matrix subalgebras of $\B(\ell_2(X))$. It is therefore clear that $(\EE_k(A))_{k \ge 0}$ defines a decreasing noncommutative martingale. Note as well that these conditional expectations approximate the diagonal restriction $\EE_\diag$---the only trace preserving conditional expectation onto the subalgebra of diagonal matrices---since the intersection of all  partitions $\mathcal{P}_k$ is the partition into singletons. If $\mathbf{d}_{k} = \mathbf{E}_k - \mathbf{E}_{k+1}$ for $k\geq 0$, then it is clear that we may write
\[
    A = \mathbf{E}_{\mathrm{diag}}(A) + \sum_{k\geq 0} \mathbf{d}_{k} (A).
\]
Next, if we set $\mathcal{D}_z = \mathrm{diag}(z^{|\cdot|})$ we claim that
\begin{equation} \label{Martingale transform}
   \mathcal{C}_z(A)= \mathcal{D}_z \cdot \mathbf{E}_{\mathrm{diag}}(A) + \sum_{k\geq 0} z^{k} \mathbf{d}_{k} (A).
\end{equation}   
Indeed, $\mathcal{C}_z$ acts by pointwise multiplication with $z^{|\cdot|}$ on the diagonal, whereas for $x\neq y$, we have $x\sim_k y \ \text{and } x\not\sim_{k+1} y \ \Leftrightarrow \ |x\wedge y| = k$. Thus, the off-diagonal entries of $\mathbf{d}_{k} (A)$ are exactly $\mathbf{1}_{|x\wedge y|=k}A_{x,y}$ while the diagonal ones vanish. Finally, the statement follows from the martingale transform inequality \eqref{MartTransforms} and the identity \eqref{Martingale transform}  
\[
    \| \mathcal{C}_z(A) \|_p 
    \leq \| A \|_p + \Big\| \sum_{k \geq 0} z^k \mathbf{d}_{k} (A) \Big\|_p 
    \lesssim \, \max\{ p, p' \} \, \Big( \sup_{k \geq 0} |z^k| \Big) \, \| A \|_p =  \, p_\ast \, \| A \|_p. 
\]
The same argument gives cb-bounds. This follows from the fact that martingale transforms are completely bounded, which follows automatically by taking the expectations $\EE_k \otimes \Id_{M_m}$ instead.
\end{proof}

\textit{Radial Schur multipliers on trees. }
Define 
\begin{equation}
    \label{eq:DeltaDef}
    \Delta_z(A) = \Big[ z^{d(x,y)} A_{x,y} \Big]_{x,y}.
\end{equation}
It is an elementary fact that
\begin{equation*}
    \label{eq:triangle}
    d(x,y) = |x| + |y| - 2 |x \wedge y|.
\end{equation*}
This implies that
\[
    \Delta_z = \mathcal{U}_z \circ \mathcal{C}_{z^{-2}}
\]
with $\mathcal{U}_z(A)_{x,y} = z^{|x|+|y|} A_{x,y}$. Since the latter is given by left and right multiplication by diagonal unitaries, we see that $\mathcal{U}_z$ are $S_p$-isometries, so that $\Delta_z$ and $\mathcal{C}_{z^{-2}}$ share the same norm and cb-norm on Schatten $p$-classes. We are now in position to prove the main result of this paper.

\begin{proof}[{Proof of Theorem \ref{Main Thm}}]
    Let $\Delta \colon S_p(X) \to L_p(\TT;S_p(X))$ be the map sending $A$ to the function $z \mapsto \Delta_z(A)$. Then, we will see that
    \[
        \frac1{c(p)} \, \| A \|_p \lesssim \| \Delta(A) \|_{L_p(\TT; S_p)} \lesssim c(p) \, \| A \|_p,
    \]
    with $c(p) = p_\ast$. Indeed, the fact that $\Delta$ is bounded follows from Lemma \ref{KeyLemma} and the identity $\Delta_z = \mathcal{U}_z \circ \mathcal{C}_{z^{-2}}$. Integrating on $z$ yields
    \[
        \| \Delta(A) \|_{L_p(\TT;S_p)} = \Big( \int_\TT \| \Delta_z(A) \|_{S_p}^p \, dz \Big)^{\frac1p} \lesssim c(p) \Big( \int_\TT \| A \|_{S_p}^p \, dz \Big)^{\frac1p} = c(p) \, \| A \|_{S_p}.
    \]
    To see the lower bound, just notice that $A = \Delta_{\bar z} \Delta_z(A)$ and integrate on the torus $\TT$. Now, given an even function $m: \ZZ \to \CC$ satisfying $M(x,y) = m(d(x,y))$ as in the statement, we have that the following diagram commutes
    \begin{equation}
        \label{eq:Intertwining}
        \begin{tikzcd}
            S_p(X) \arrow[r, "\Delta"] \arrow[d, "S_M"] & L_p(\TT;S_p(X)) \arrow[d, "T_m \otimes \Id"]\\
            S_p(X) \arrow[r, "\Delta"] & L_p(\TT;S_p(X))
        \end{tikzcd}
    \end{equation}  
In other words, we have $\Delta \circ S_M = (T_m \otimes \mathrm{id}) \circ \Delta$. Since $\Delta$ is a quasi-isometry, the boundedness of $T_m \otimes \Id$ implies the boundedness of $S_M$ with constant $c(p)^2$. The complete bound for $S_M$ follows from the fact that $\Delta$ and its inverse are completely bounded. To see the second assertion, let us first assume that $X$ contains an infinite ray starting at $o \in X$. Let us denote by $\rho = (\rho_n)_{n \geq 0}$ the sequence of nodes of this ray. The restriction of $M(x,y)$ to $\rho$ is just
    \[
        M(\rho_n, \rho_\ell) = m(d(\rho_n, \rho_\ell)) = m(|n-\ell|).
    \]
Restriction of Schur multipliers preserves the norm on Schatten $p$-classes \cite[Section 1]{LaffSall2011}. It turns out moreover that the restriction to $\rho \times \rho$ gives a Herz-Schur multiplier over $S_p(\ZZ)$. Thus, by Fourier-Schur transference \cite{NeuRic2011} over $\ZZ$ we obtain that: 
\[
    \big\| S_M \colon S_p(X) \to S_p(X) \big\| \geq \big\| S_{M_{\mid_{\rho \times \rho}}} \! \colon S_p(\NN_0) \to S_p(\NN_0) \big\| \geq \big\| T_m \colon L_p(\TT) \to L_p(\TT) \big\|,
\]
where we have used that, for even symbols $m$, the Herz-Schur multiplier norm is detected by just the positive entries on $S_p(\NN_0)$. This allows us to conclude. Observe as well that the inequalities above work similarly well for completely bounded norms. Now, if $X$ doesn't contain an infinite ray, because the diameter is infinite, it has to contain arbitrarily long geodesic segments starting at $o \in X$. By restricting to those segments and using that the $L_p$-norm of $T_m$ can be approximated when restricting the multiplier to $[0,N]$ as $N \to \infty$ we can conclude.
\end{proof} 

\begin{remark} \normalfont
The lower estimate holds as well in the category of Banach spaces---replacing cb by ordinary norms---whereas our argument for the upper bound uses critically the cb-boundedness of the Fourier multiplier $T_m \colon L_p(\TT) \to L_p(\TT)$. If the upper estimate also held for Banach space norms, $S_p$-bounded radial Schur multipliers on unbounded trees would coincide with the class of $L_p$-bounded Fourier multipliers on $\TT$. As pointed by Patrick Poissel, according to the existence from \cite[Chapter 8]{Pi1998} of Fourier $L_p$-multipliers on $\TT$ which fail complete boundedness, this would imply the existence of Schur $S_p$-multipliers which fail to be completely $S_p$-bounded, solving an open problem posed by Pisier in 1998. We refer to \cite{HuangSukochevTomskova2026Schur} for recent progress on the problem.    
\end{remark}

\begin{remark}\label{rmk:Optimalcp} \normalfont
    While the constant in \eqref{SphericalInequality} can easily be seen to be optimal, at present, wo do not know whether the $p$-dependent constant in Theorem \ref{Main Thm} can be improved. The fact that Theorem \ref{Main Thm} fails for $p=\infty$ implies that the constant must diverge when $p\to 1^+$ or $p \to \infty$. It is also evidently self-dual. Thus, it would be natural to conjecture that it can be lowered from $O(p_\ast^2)$ to $O(p_\ast)$, but we have only partial results in that direction. A naïve approach would consist on noticing that, if $\ev_0: C(\TT) \to \CC$ is the state given by evaluation at $0 \in \TT$, then formally it is satisfied that 
    \[
        (\ev_0 \otimes \Id) \circ (T_m \otimes \Id) \Delta = S_M, \, 
        \text{ where } M(x,y) = m(d(x,y)).
    \]
    However, $\ev_0$ is unbounded on $L_p(\TT)$. Similarly, one can show that, for every $A \in S_p(X)$, $z \mapsto \Delta_z(A)$ is norm continuous in $S_p(X)$. This follows from a careful application of Burkholder's inequality for martingale transforms \eqref{MartTransforms}. But, while this gives that $\Delta \colon S_p(X) \to C(\TT; S_p(X))$ is a bounded map, in order to apply the intertwining identity we will need for $T_m$ to be completely bounded on $C(\TT)$, leaving only non-singular multipliers obtained when $\widehat{m}$ is a finite signed measure in $M(\TT)$. A more sophisticated approach will be considered in a follow-up paper. 
\end{remark}

\subsection{General spherical functions on homogeneous trees}
Take $X$ to be the $(q+1)$-homogeneous tree, which we will denote by $X_q$, and let us again fix a root $o \in X$ on it. The spherical functions on $X_q$ are defined as radial functions that are formal eigenvectors of the adjacency operator 
\[
    \A_q f (x) = \frac1{q+1} \, \sum_{ y \, | \, x \sim y } f(y).
\]
More concretely, a function $\varphi: X_q \to \CC$ is \emph{spherical} iff
\begin{itemize}
    \item $\varphi(o) = 1$.
    \item There exists $z \in \CC$, such that $\A_q \varphi = z \, \varphi$.
    \item $\varphi$ is \emph{radial}, that is, there exists $\mathring{\varphi}: \NN_0 \to \CC$ such that $\varphi(x) = \mathring{\varphi}(d(x,o))$.   
\end{itemize}
We will refer to the parameter $z \in \CC$ as the eigenvalue of the spherical function and we will denote the functions involved by $\varphi_z$ and $\mathring{\varphi}_z$ whenever we want the relationship between the eigenvalue and its eigenfunction to be explicit. Recall that, if $x \in X_q$ is a vertex at distance $n$ from the root, then it is exactly adjacent to  $q$ vertices at distance $n+1$ and one at distance $n-1$. Thus, the definition of a spherical function can be translated into the following recurrence for its radial profile $\mathring{\varphi}_z$
\begin{equation}
    \label{eq:RecurrenceSpherical}
   q \, \mathring{\varphi}_z(n + 1) = (q + 1) \, z \, \mathring{\varphi}_z(n) - \mathring{\varphi}_z(n-1) \; \text{ for  } \; n \geq 1,
\end{equation}
while $\mathring\varphi(0) = 1$ and $\mathring\varphi(1) = z$. It is well known, see \cite[Theorem 0.3]{HaagSteenSw2009}, that the radial Schur multiplier on $S_\infty(X_q)$ given by
\begin{equation}
    \label{eq:qSphericalDef}
    \Delta_{q, z}(A) := \Big[  \mathring\varphi_z(d(x,y)) \, A_{x,y} \Big]_{x, y \in X}
\end{equation}
is $S_\infty$-bounded precisely when $z$ lives in the following elliptical region 
\[ 
    \bigg\{ z \in \CC : \Re(z)^2 + \Big( \frac{q+1}{q-1} \Big)^2 \, \Im(z)^2 < 1 \bigg\} \cup \{\pm 1\}.
\]
Let us denote $\DD_q \subset \CC$ the interior of that ellipse. In analogy to the discussion after \eqref{SphericalInequality} it would be natural to conjecture that $\Delta_{q, z}$ is  uniformly $S_p$-bounded when $z \in \overline{\DD}_q$. Here, we will present a direct elementary proof of that fact that gives the same constant as in \eqref{SphericalInequality}. We will need a few facts beforehand. First, the space of sequences satisfying the recurrence \eqref{eq:RecurrenceSpherical} is spanned by the exponentials $n \mapsto \alpha_+^n, \alpha_-^n$, where $\alpha_+$ and $\alpha_-$ are the roots, depending on $z$ and $q$, of the characteristic equation
\[
    q \, \alpha^2 - (q + 1) \, z  \, \alpha + 1 = 0.
\]
In the event that the characteristic equation has a single root with double multiplicity $\alpha_+ = \alpha_{-}$, then it is spanned by $\alpha^n$ and $n \alpha^n$. Elementary computations now yield that
\[
    \alpha_\pm = \frac{(q+1)z \pm \sqrt{(q+1)^2 z^2-4q}}{2q}.
\]
Thus, the two roots coincide when the discriminant $D = (q+1)^2 z^2-4q = 0$, that is when
\[
    z = \pm \frac{2 \sqrt{q}}{q+1}, \; \text{ so that } \; \alpha_+ = \alpha_- = \pm q^{- \frac12}.
\]
We will denote the values of $z$ causing double roots as $\pm c_q$.  Solving for the initial conditions of the recurrence gives
\begin{equation}
    \label{eq:ClosedFormulaSph}
    \mathring{\varphi}_z(n) 
    \, = \,
    \begin{cases}
        \displaystyle{\Big(1+\frac{q-1}{q+1}n\Big) \, \Big( \frac{\pm 1}{\sqrt{q}} \Big)^n}
            & \text{ when }  \displaystyle{z = \pm c_q}\\[10pt]
        \displaystyle{\Big( \frac12 + \frac{(q-1)}{2 \sqrt{D}}z \Big)\alpha_+^n + \Big( \frac12 - \frac{(q-1)}{2 \sqrt{D}}z \Big) \alpha_-^n}
            & \text{ otherwise }
    \end{cases}
\end{equation}
These explicit formulas are very well known in the spherical function literature, although they are normally presented with logarithmic parametrizations \cite{FigaTalamancaNebbia1991, HaagSteenSw2009}. Lastly, we will need the following elementary complex analysis lemma.

\begin{lemma}
    \label{lem:Derivatives}
    Given a tree $X$, the multiplier $\Delta_z \colon S_p(X) \to S_p(X)$, given by $\Delta_z(A)_{x, y} = z^{d(x,y)}$ for $|z| \leq 1$ is bounded for every $p \in (1,\infty)$ with the same bound as in \eqref{SphericalInequality}. Furthermore, the map $z \mapsto \Delta_z$ is holomorphic over the open disk and, given any $0 \leq r_0 < 1$, we have that
    \[
        \sup_{|z| \leq r_0} \Big\| \frac{\partial}{ \partial \, z} \Delta_z \colon S_p(X) \to S_p(X) \Big\|_\cb 
        \, \lesssim_{(r_0)} \, 
        \max \{ p, p'\} \quad \text{for every} \quad 1< p < \infty. 
    \]
\end{lemma}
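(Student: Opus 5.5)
We plan to extend the bound \eqref{SphericalInequality} from the circle to the closed disk by a maximum-principle argument, and then to get the derivative bound from Cauchy's integral formula. Fix $A$ in a dense class, say finitely supported matrices (or their tensor products with matrix units, for the cb version). For such $A$ the map $z \mapsto \Delta_z(A)$ is a matrix-valued polynomial in $z$. Its entries are $z^{d(x,y)}A_{x,y}$, and only finitely many entries are nonzero. Hence $z\mapsto \Delta_z(A)$ is an entire $S_p(X)$-valued function, and holomorphic as a map into any norm.

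For $|z|\le 1$ we would apply the maximum modulus principle for vector-valued holomorphic functions. For $B\in S_{p'}$ with $\|B\|_{p'}\le 1$, the scalar function $z\mapsto \Tr(\Delta_z(A)B)$ is a polynomial, so its modulus on the closed disk is maximized on $\TT$. There it is bounded by $C p_\ast\|A\|_p$, using Lemma~\ref{KeyLemma} together with the identity $\Delta_z=\mathcal U_z\circ\mathcal C_{z^{-2}}$ and the fact that $\mathcal U_z$ is an isometry. Taking the supremum over $B$ gives $\|\Delta_z(A)\|_p\le Cp_\ast\|A\|_p$ for $|z|\le1$. The same argument with $A\in S_p(X;S_p^m)$ gives the cb-bound. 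Density then extends $\Delta_z$ to all of $S_p$, with the same bound.

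Holomorphy of $z\mapsto\Delta_z$ on the open disk as an operator-valued map is the main point, since the dense-class argument only gives pointwise holomorphy. We would argue via weak holomorphy plus local boundedness. The family $\{\Delta_z\}_{|z|\le 1}$ is uniformly bounded, and for $A$, $B$ finitely supported, $z\mapsto \Tr(\Delta_z(A)B)$ is holomorphic. A uniformly bounded operator-valued function that is weakly holomorphic on dense subsets of $S_p$ and $S_{p'}$ is holomorphic in the operator norm, by the standard Vitali-type or Cauchy-integral argument. Concretely, define
\[
\Delta_z=\frac1{2\pi i}\int_{|w|=r}\frac{\Delta_w}{w-z}\,dw \qquad (|z|<r<1)
\]
as a strong integral. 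This agrees with $\Delta_z$ on the dense class, hence everywhere by uniform boundedness. To avoid measurability subtleties on $\TT$, we take the contour radius $r$ strictly less than $1$, where continuity of $w\mapsto\Delta_w(A)$ is clear by density and the uniform bound.

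For the derivative, Cauchy's formula gives
\[
\partial_z\Delta_z=\frac1{2\pi i}\int_{|w|=r}\frac{\Delta_w}{(w-z)^2}\,dw .
\]
Choose $r=(1+r_0)/2$. Then for $|z|\le r_0$ we have $|w-z|\ge (1-r_0)/2$, so
\[
\|\partial_z\Delta_z\|_\cb\le \frac{r}{((1-r_0)/2)^2}\sup_{|w|=r}\|\Delta_w\|_\cb\lesssim_{r_0}p_\ast ,
\]
which is the claimed estimate. The cb-version follows identically, since the Cauchy integral commutes with $\otimes\Id_{S_p^m}$.
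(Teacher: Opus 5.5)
Your proof is correct, but for the first assertion it takes a different route from the paper. The paper writes $z=\rho u$ with $|u|=1$ and $0\le\rho<1$, and factors $\Delta_z=\Delta_u\circ\Delta_\rho$, where $\Delta_\rho$ has symbol $\rho^{d(x,y)}$. Since the graph distance of a tree is conditionally negative definite (Haagerup), this symbol is positive definite with unit diagonal. Hence $\Delta_\rho$ is completely contractive on every $S_p$, $1\le p\le\infty$, which gives $\|\Delta_z\|_\cb\le\|\Delta_u\|_\cb$ directly.

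You instead use only the bound on $\TT$ together with the maximum modulus principle, applied to the polynomials $z\mapsto\Tr(\Delta_z(A)B)$ for finitely supported $A$. Your argument avoids Haagerup's theorem entirely. It would work verbatim for any family of symbols $z^{f(x,y)}$ with $f\ge 0$ integer valued that is uniformly bounded on the circle. The paper's factorization is shorter once that theorem is available, and it carries extra structural information: the radial damping $\rho^{d(x,y)}$ is a complete contraction for all $p$, including the endpoints.

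For the derivative estimate, the paper merely invokes holomorphy of the symbol. Your Cauchy integral on the circle of radius $r=(1+r_0)/2$ is exactly the justification left implicit there. It correctly yields holomorphy in operator and cb norm, with the explicit constant $r(r-r_0)^{-2}\le 4(1-r_0)^{-2}$.

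Incidentally, your caution about placing the contour inside the open disk is unnecessary. The density argument combined with the uniform bound already gives norm continuity of $w\mapsto\Delta_w(A)$ on the whole closed disk.
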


\begin{proof}
    The proof is immediate. First, notice that if $|z| \leq 1$, then $z = \rho \, u$ with $|u|=1$. If $|\rho|=1$ there is nothing to prove. Meanwhile, if $\rho < 1$ we have that $\Delta_z$ is the composition of $\Delta_u$ and the radial Schur multiplier $\rho^{d(x,y)}$, which is positive definite and thus completely bounded for every $p \in [1,\infty]$.
    The second inequality follows from the fact that the symbol $z \mapsto z^{d(x,y)}$ is bounded and holomorphic when $z$ in the unit disk and that the derivative of an holomorphic map is also holomorphic. Observe as well that this is equivalent to the $S_p$-boundedness of the radial Schur symbols $d(x,y) \, z^{d(x,y)}$ for $|z| \leq r_0 < 1$.
\end{proof}

With those ingredients at hand, we can now proceed to the main result of this subsection.

\begin{theorem}
    \label{thmEllipsoid}
    The radial Schur multiplier $\Delta_{q,z}$ in \eqref{eq:qSphericalDef} satisfies that
    \[
        \sup_{z \in \overline{\DD}_q} \big\| \Delta_{q,z}\colon S_p(X_q) \to S_p(X_q) \big\|_\cb \, \lesssim \, p_\ast \quad \text{for every} \quad 1 < p < \infty.
    \]
\end{theorem}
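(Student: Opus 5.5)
The plan is to write $\mathring\varphi_z$ as a combination of the exponentials in \eqref{eq:ClosedFormulaSph}. The symbol then becomes
\[
\mathring\varphi_z(d(x,y)) = c_+(z)\,\alpha_+^{d(x,y)} + c_-(z)\,\alpha_-^{d(x,y)},
\]
so $\Delta_{q,z} = c_+(z)\Delta_{\alpha_+} + c_-(z)\Delta_{\alpha_-}$. I will then bound each $\Delta_{\alpha_\pm}$ by Lemma \ref{lem:Derivatives}, since both roots lie in the closed unit disk. The key observation is that $\alpha_+\alpha_- = 1/q$ and $\alpha_++\alpha_- = (q+1)z/q$. Writing $\alpha_\pm = q^{-1/2}e^{\pm i\theta}$ with $\theta\in\CC$ (Joukowski parametrization), we get $z = \frac{2\sqrt q}{q+1}\cos\theta$. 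A direct computation shows that the closed ellipse $\overline{\DD}_q$ is exactly the image of $\{|\Im\theta| \le \frac12\log q\}$, and this is equivalent to $\max\{|\alpha_+|,|\alpha_-|\}\le 1$. Indeed, the boundary ellipse corresponds to $|\alpha_+|=1$ (then $|\alpha_-|=1/q$), with semi-axes $1$ and $\frac{q-1}{q+1}$, which matches $\DD_q$. So for $z\in\overline{\DD}_q$ we have $|\alpha_\pm|\le1$, and Lemma \ref{lem:Derivatives} (which relies on \eqref{SphericalInequality}) gives $\|\Delta_{\alpha_\pm}\|_\cb \lesssim p_\ast$ uniformly.

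The remaining issue is that the coefficients $c_\pm(z) = \frac12 \pm \frac{(q-1)z}{2\sqrt D}$ blow up as $z\to\pm c_q$, where $D\to0$ and the two roots merge at $\pm q^{-1/2}$. I handle this by splitting $\overline{\DD}_q$ into two regions. Away from $\pm c_q$, say when $|z\mp c_q|\ge\delta$ for a fixed small $\delta=\delta(q)$, the quantity $|\sqrt D|$ is bounded below, so $|c_\pm(z)|\le C_q$. The triangle inequality then gives the bound $2C_q\,p_\ast$.

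Near $\pm c_q$ I rewrite
\[
\mathring\varphi_z(n) = \frac{\alpha_+^n+\alpha_-^n}{2} + \frac{(q-1)z}{2}\cdot\frac{\alpha_+^n-\alpha_-^n}{\alpha_+-\alpha_-}\cdot\frac{\alpha_+-\alpha_-}{\sqrt D},
\]
where $\frac{\alpha_+-\alpha_-}{\sqrt D}=\frac1q$ is constant. The first term is controlled as before. For the divided difference I use that $w\mapsto\Delta_w$ is holomorphic on the open unit disk, and near $\pm c_q$ both roots lie in the disk $|w|\le r_0$ for some $r_0<1$ (e.g. $r_0=\frac12(1+q^{-1/2})$ for small $\delta$). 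The segment between $\alpha_-$ and $\alpha_+$ stays in this convex disk, so
\[
\frac{\Delta_{\alpha_+}-\Delta_{\alpha_-}}{\alpha_+-\alpha_-} = \int_0^1 \partial_w\Delta_w\big|_{w=\alpha_-+t(\alpha_+-\alpha_-)}\,dt,
\]
whose cb-norm is $\lesssim_{r_0} p_\ast$ by the derivative estimate in Lemma \ref{lem:Derivatives}. This also covers the exact point $z=\pm c_q$, either by continuity or because the formula there is $(1+\frac{q-1}{q+1}n)(\pm q^{-1/2})^n$, which combines $\Delta_w$ and $w\,\partial_w\Delta_w$ at $w=\pm q^{-1/2}$.

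Combining the two regions gives a bound $C_q\,p_\ast$ uniformly on $\overline{\DD}_q$. The constant depends on $q$ but not on $p$, which is the meaning of $\lesssim$ here. The main obstacle is the degeneration near $\pm c_q$, which the divided-difference argument resolves. Verifying that $\overline{\DD}_q$ corresponds exactly to $|\alpha_\pm|\le1$ is routine but must be checked carefully.
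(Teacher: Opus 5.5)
Your proposal is correct and follows essentially the same route as the paper: the regrouping $\mathring\varphi_z(n) = \frac12(\alpha_+^n+\alpha_-^n) + \frac{(q-1)z}{2q}\,\frac{\alpha_+^n-\alpha_-^n}{\alpha_+-\alpha_-}$, then a split into two regions. Where the roots are separated you use bounded coefficients; your condition $|z\mp c_q|\ge\delta$ is equivalent to the paper's $|\alpha_+-\alpha_-|\ge\delta$, since $\alpha_+-\alpha_-=\sqrt D/q$. Near the double root you use the mean value formula together with the derivative bound of Lemma \ref{lem:Derivatives}. Your $\cos\theta$ parametrization is the paper's Joukowski map in disguise, with one small advantage: it explicitly gives the direction actually needed, namely that every $z\in\overline{\DD}_q$ has $|\alpha_\pm|\le 1$.
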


\begin{proof}
    We divide the proof into two steps. First, we will show that $z \in \DD_q$ implies that both roots satisfy $|\alpha_\pm| \leq 1$ and then, we will see that \eqref{eq:ClosedFormulaSph} implies $S_p$-boundedness when $|\alpha_\pm|\leq 1$. For the first claim, notice that $\alpha_+ \alpha_- = \tfrac1{q}$. Thus, if one of the roots satisfies $|\alpha_+| \leq 1$ then the other satisfies $|\alpha_-|\geq \frac1{q}$, thus
    \begin{equation}
        \label{eq:Annulus}
        \Big\{ z : |\alpha_+| \leq 1 \text{ and } |\alpha_-| \leq 1 \Big\}
        \; = \; \Big\{ z : \tfrac1{q} \leq |\alpha_+| \leq 1 \Big\} =: \mathbf{A}_q.
    \end{equation}
    The formulas for $\alpha_\pm$ in terms of $z$ are obtained after solving $\alpha$ in terms of $z$ the characteristic equation. If we instead express $z$ in terms of $\alpha$, we obtain the so-called Joukowski map
    \[
        J_q(\alpha) = \frac1{q+1} \Big( q \alpha + \frac1{\alpha}\Big)
    \]
   of which $z \mapsto \alpha_\pm(z)$ are the two branches of its inverse. Thus, we can characterize the set in \eqref{eq:Annulus} as the image under $J_q$ the the annulus $\mathbf{A}_q$. Now, elementary computations show that $J_q$ maps $\mathbf{A}_q$ into $\overline{\DD}_q$. In fact every concentric circle is mapped into an ellipse up to the circle $\{|z|= \tfrac1{q}\}$, which is mapped to the interval $[-c_q, +c_q] \subseteq \DD_q$.
    
    Now, we just need to show that the formula \eqref{eq:ClosedFormulaSph} induces uniformly bounded $S_p$ multipliers when $|\alpha_\pm| \leq 1$. Let us first deal with the case of two different roots. Regrouping terms gives
    \[
        \mathring{\varphi}_z(n)
        =
        \frac12 (\alpha_+^n+\alpha_-^n)
        +b_z \, \frac{\alpha_+^n-\alpha_-^n}{\alpha_+-\alpha_-}
        \quad \text{where} \quad b_z=\frac{q-1}{2q}z.
    \]
    The first summand induces the radial Schur multiplier $\Delta_{\alpha_+} + \Delta_{\alpha_-}$, which is uniformly bounded on the disk by Lemma \ref{lem:Derivatives}. For the second term, first notice that the coefficient $b_z$ is bounded on $\overline{\DD}_q$ and thus irrelevant. Next, fix $\delta > 0$ to be determined later. We separate the cases $|\alpha_+ - \alpha_-| \geq \delta$ and $|\alpha_+ - \alpha_-| <\delta$. In the first case, we have that
    \[
        \Big\| \frac{\Delta_{\alpha_+}-\Delta_{\alpha_-}}{\alpha_+-\alpha_-} \Big\|_{\CB(S_p)} \leq \frac{2}{\delta} \sup_{z \in \overline{\DD}_q} \| \Delta_z \|_{\CB(S_p)}.
    \]
    For the second case, using that $\alpha_+ \, \alpha_- = \tfrac1{q}$, we get that if $\alpha_+$, $\alpha_-$ are close enough, then their modulus should be of the order of $\tfrac1{\sqrt{q}} < 1$ and thus, they can be taken to lay in a closed disk $|z|\leq r_\delta < 1$. By the mean value theorem we get
    \[
        \frac{\Delta_{\alpha_+}-\Delta_{\alpha_-}}{\alpha_+-\alpha_-} = \int_0^1 \frac{\partial \Delta_z}{ \partial \, z} \big( \alpha_- + t (\alpha_+ - \alpha_-) \big)\, dt.
    \]
    Taking operator norms and applying Lemma \ref{lem:Derivatives} we can conclude.
    For the double multiplicity case, it is enough to consider the radial multiplier of symbol
    \[
        M(x,y) = \Big(1+\frac{q-1}{q+1}\,d(x,y)\Big) \, q^{-\frac{d(x,y)}{2}}.
    \]
    Its $S_p$-boundedness follows from that of $d(x,y)\rho^{-d(x,y)}$, which is obtained from Lemma \ref{lem:Derivatives}. 
\end{proof}

\begin{remark}\normalfont
    Alternatively, we could have used Theorem \ref{Main Thm} as a black box and check that $n \mapsto \mathring{\varphi}_z(n)$ is an even multiplier on the torus. But this, in turn, is almost trivial, since it follows from \eqref{eq:ClosedFormulaSph} after expressing $\mathring{\varphi}_z(n)$ as a linear combination of two complex exponentials, which implies  $T_{\mathring{\varphi}_z(n)}$ acts as a combination of Poisson kernels. This, while straightforward, would give a suboptimal constant since we need to pay the extra $p_\ast$ in Theorem \ref{Main Thm}.
\end{remark}


\section{The failure of Fourier-Schur transference on free groups} \label{Sect2}


\subsection{Free balls and horoballs} 

Let $\FF_r$ be the free group on $r$ generators and equip it with its natural word length $| \cdot |$ induced by the generating set. Consider the tree $X$ given by the Cayley graph of $\FF_r$ and note that its graph distance satisfies $d(g,h) = |g^{-1}h|$. Observe that, while the Schur symbols of Herz Schur multipliers depend on $gh^{-1}$, so that restrict to Fourier multipliers on $\L \sG \subseteq \B(L_2 \sG)$, working with symbols depending on $g^{-1}h$ is inconsequential for the Schur norms since $g \mapsto g^{-1}$ is a measurable map preserving the measure class in every l.c. group \cite[Lemma 1.1]{Parcet2024Idempotentschur}. Thus, we will choose $g^{-1}h$ when right-invariant distances and objects are more standard in the broader literature. 

\begin{proof}[Proof of Theorem \ref{CorFree}\ref{itm:CorFree1}]
The free ball Schur multipliers satisfy $M_R(g,h) = \1_{[0,R]}(d(g,h))$ and Theorem~\ref{Main Thm} reduces the assertion---up to a constant $p_\ast^2$---to the cb-boundedness of the Fourier multiplier with symbol $\1_{[-R,R]}$ on $L_p(\TT)$. The latter can be expressed in terms of two Hilbert transforms, which yield an additional factor $p_\ast$ to the constant.
\end{proof}

Using that the radial subalgebra of the free group algebra $\L \FF_r$ is Abelian, first Kuhn and later Bo{\.{z}}ejko/Fendler (independently) \cite{Kuhn1984Convergence, BoFen2006} showed that the free ball multipliers $\1_{B_R}$ are not uniformly bounded on $L_p(\mathcal{L}\FF_r)$ when $\big| \tfrac12- \tfrac1{p} \big| > \tfrac1{6}$. Together with the theorem above and for $p$ in that range, this shows that the norm of Herz-Schur multipliers on $S_p(\FF_r)$ and of their associated Fourier multipliers are not comparable. Since the upper bound in \eqref{FSTransference} is known to hold for arbitrary l.c. groups, it follows from the closed graph theorem that there are (radial) symbols $m \colon \FF_r \to \CC$ failing Fourier-Schur transference \eqref{FSTransference} in that range of $p$'s. 

We now improve on the result above by constructing explicit symbols which additionally fail to be $L_p$-bounded as Fourier multipliers. The free horoballs defined in the Introduction witness the behavior of all free balls $B_R$ simultaneously. Contrary to the Euclidean case---where horoballs become half-spaces---$\HB_\rho$ and $\HB_\omega$ are eventually disjoint at infinity for $\rho \neq \omega \in \partial \FF_r$. Observe that the union defining $\HB_\rho$ is increasing. Thus, by $L_p$-density of trigonometric polynomials, the uniform boundedness of $\1_{B_R}$ as Fourier multipliers on $L_p(\L \FF_r)$ implies the boundedness of $m_\rho = \1_{\HB_\rho}$ by translation invariance. In fact, we have the following equivalence.

\begin{lemma} 
    \label{HoroballvsBall}
    Given $\rho \in \partial \FF_r$ and $p \neq 2$, the following are equivalent:
    \begin{enumerate}[label={\rm \textbf{(\roman*)}}, ref={(\roman*)}]
        \item \label{itm:HoroballvsBall.1} $\displaystyle \big\| T_{m_\rho} \colon L_p(\L \FF_r) \to L_p(\L \FF_r) \big\| < \infty$.
        \item \label{itm:HoroballvsBall.2} $\displaystyle \sup_{R > 0} \big\| T_{\1_{B_R}} \colon L_p(\L \FF_r) \to L_p(\L \FF_r) \big\| < \infty$. 
    \end{enumerate}
\end{lemma}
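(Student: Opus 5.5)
The implication (ii) $\Rightarrow$ (i) is the one already sketched before the statement. I would make it precise as follows. By definition $\HB_\rho = \bigcup_n B_n(\rho_n)$ is an increasing union. For a trigonometric polynomial $f = \sum_g \widehat f(g)\lambda(g)$ with finite support $F$, I take $n$ so large that $F \cap \HB_\rho = F \cap B_n(\rho_n)$. Then $T_{m_\rho} f = T_{\1_{B_n(\rho_n)}} f$. Translating the ball to the origin, $\1_{B_n(\rho_n)}(g) = \1_{B_n}(\rho_n^{-1} g)$, so
\[
T_{\1_{B_n(\rho_n)}} f = \lambda(\rho_n)\, T_{\1_{B_n}}\big(\lambda(\rho_n)^{-1} f\big).
\]
Left multiplication by the unitary $\lambda(\rho_n)$ is an isometry of $L_p(\L\FF_r)$. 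Hence $\|T_{m_\rho} f\|_p \le \sup_R \|T_{\1_{B_R}}\|\, \|f\|_p$, and density of trigonometric polynomials concludes.

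For (i) $\Rightarrow$ (ii), I would approximate a translate of $\1_{B_R}$ by a horoball symbol restricted to a shell.

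First, I would describe $\HB_\rho$ via the Busemann function. Writing $b_\rho(g) = \lim_n (|\rho_n^{-1}g| - n)$, the membership $g \in B_n(\rho_n)$ means $|\rho_n^{-1}g| - n \le 0$. Since $|\rho_n^{-1}g| - n$ is nonincreasing in $n$ and eventually constant, this gives $\HB_\rho = \{b_\rho \le 0\}$, a horoball.

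Second, I would conjugate by a deep point. Translating by $\rho_N^{-1}$, the horoball $\rho_N^{-1}\HB_\rho$ is a horoball based at the point at infinity determined by the shifted ray. Its level set at height $-N$ relative to the identity is contained in the ball $B_N(e)$ along the ray direction. Translation of symbols again costs nothing, since $T_{m(\gamma^{-1}\cdot)} = \lambda(\gamma) T_m \lambda(\gamma)^{-1}$.

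The key observation is that for $g$ with $|g| = R$, the condition $g \in B_R(e)$ holds automatically. The discrepancy between $\1_{B_R}$ and the horoball lies only on spheres, not on large regions. Here I expect the main obstacle: a single horoball does not reproduce a ball.

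To get around this I would average over boundary points. The radial projection (averaging over the stabilizer of $e$ in the automorphism group of the tree, or via Haagerup's radial averaging on $\FF_r$) of $\1_{\HB_\omega}$, with $\omega$ ranging over the boundary with harmonic measure, is a radial function $\phi(|g|)$. Radial averaging of a horoball multiplier is an average of conjugates by automorphisms. For $\FF_r$ I would use the fact that the radial projection $P_{\rm rad}$ is bounded on $L_p$ for the relevant Fourier multiplier norms, or replace automorphisms by the boundary action. This yields $\sup_N \|T_{\phi_N}\| < \infty$ for the family of averaged, translated horoballs $\phi_N$.

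Then I would compute: for the horoball at distance $N$, the averaged profile is $\phi_N(k) = \mathbb P(b_\omega(g) \le N)$ for $|g| = k$. This equals $1$ for $k \le N$ and decays geometrically like $q^{-(k-N)/2}$-type terms beyond. So $\phi_N - \1_{[0,N]}$ is a radial function supported outside $B_N$ with geometric decay in $k-N$.

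Finally, I would show such tails are uniformly $L_p$-bounded via the Haagerup inequality and radial Cesàro estimates. Since they decay exponentially from the sphere of radius $N$, the spherical averages $\chi_k$ satisfy $\|\chi_k * f\|_p$ bounds summable against the decay only if that is actually true; for $p \neq 2$ this is exactly where I expect the main difficulty. If needed, I would instead use the Kuhn/Bożejko–Fendler reduction: unboundedness of balls is witnessed on the radial algebra, which is abelian and carries the spherical Plancherel measure, so the horoball average argument can be carried out there on the level of spherical transforms.
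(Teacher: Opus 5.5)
Your proof of (ii)$\Rightarrow$(i) is correct and is the paper's argument. The converse (i)$\Rightarrow$(ii), however, is not established.

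\textbf{The missing step.} The decisive point is uniform boundedness on $L_p(\L\FF_r)$ of the tails $\phi_N-\1_{B_N}$, which are geometric combinations of the sphere indicators $\1_{\{|g|=k\}}$, $k>N$. You leave this open, and the tools you name cannot supply it. Haagerup's inequality plus interpolation only gives $\|T_{\1_{\{|g|=k\}}}\|_{p\to p}\lesssim (k+1)^{|1-2/p|}$, so summing against the geometric weights yields a bound of order $N^{|1-2/p|}$, not a uniform one. The fallback to the radial algebra controls $T_{\1_{B_R}}$ only on $\A_\rad$, which is not statement (ii).

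\textbf{The averaging step is also misjustified, but repairable.} The conjugation $\lambda_g\mapsto\lambda_{\sigma g}$ by a tree automorphism $\sigma$ fixing $e$ is an automorphism of $\L\FF_r$ only when $\sigma$ is a group automorphism, and the length-preserving ones are just the signed permutations of the generators. Haagerup's radialization is a Herz--Schur ($p=\infty$) statement, and transferring such statements to $L_p$ Fourier multipliers is precisely what fails in this paper. The fix is direct: $\{b_\omega\le N\}$ is a pointwise limit of left translates $v\HB_\rho$, whose multipliers are $\lambda_v T_{m_\rho}\lambda_v^*$. By pointwise limits on trigonometric polynomials and convexity, $\sup_N\|T_{\phi_N}\|\le\|T_{m_\rho}\|$.

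\textbf{The paper's idea.} A single horoball reproduces the ball exactly on each of two complementary cones. Let $x_n$ be the last letter of $\rho_n$, let $\sA_R$ be the words starting with $x_{R+1}$ or $x_R^{-1}$, let $\sB_R=\FF_r\setminus(\sA_R\cup\{e\})$, and pick $x_\perp\notin\{x_R,x_{R+1}^{-1},x_{R+2}\}$. The words $\rho_{R+1}x_\perp w$ ($w\in\sA_R$) and $\rho_R w$ ($w\in\sB_R$) are reduced and leave the ray after $R+1$, resp.\ $R$, letters. So in both cases $b_\rho=|w|-R$, and $\1_{B_R}$ coincides with a left translate of $m_\rho$ on each cone. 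The cut-offs onto $\sA_R$ and $\sB_R$ are free Hilbert transforms (Mei--Ricard), which gives $\|T_{\1_{B_R}}\|\le 1+2c_p\|T_{m_\rho}\|$.

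\textbf{Alternative: your route can be closed without any tail estimate.} Take $r<\infty$ and $q=2r-1$. Your averaged profiles satisfy the exact relation $\phi_M-q^{-1}\phi_{M+2}=(1-q^{-1})\1_{B_M}-\frac{1}{q(q+1)}\1_{\{M<|g|\le M+2\}}$. Weight the instance $M=N+2\ell$ by $q^{-2\ell}$ and sum over $\ell\ge 0$; the shell terms cancel and you get
\[
\1_{B_N}=\frac{q+1}{q}\Big(\phi_N-(q-1)\sum_{\ell\ge1}q^{-2\ell}\,\phi_{N+2\ell}\Big).
\]
This is an absolutely convergent combination, so $\|T_{\1_{B_N}}\|\le\frac{q+2}{q}\,\|T_{m_\rho}\|$ uniformly in $N$.
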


\begin{proof}
The implication \ref{itm:HoroballvsBall.2} $\Rightarrow$ \ref{itm:HoroballvsBall.1} has been already justified. For the converse, consider a branch $\rho = (\rho_n)_{n \ge 1} \in \partial \FF_r$ and let $x_n$ be the last letter of $\rho_n$. Fix a positive integer $R$ and take the partition $\FF_r = \mathrm{A}_R \cup \mathrm{B}_R \cup \{e\}$ with $\mathrm{A}_R$ the words starting with $x_{R+1}$ or $x_R^{-1}$ and $\mathrm{B}_R$ the complementary set in $\FF_r \setminus \{e\}$. Let $x_\perp$ be a generator or an inverse of a generator not belonging to $\{ x_R, x_{R+1}^{-1}, x_{R+2} \}$. Then 
\[
    T_{\1_{B_R}} \Big( \sum_{w \in \FF_2} a_w \lambda_w \Big) = a_e \lambda_e  + \lambda_{\rho_{R+1} x_\perp}^* T_{m_\rho} \Big( \sum_{w \in \mathrm{A}_R} a_w \lambda_{\rho_{R+1} x_{\perp} w} \Big) + \lambda_{\rho_R}^* T_{m_\rho} \Big( \sum_{w \in \mathrm{B}_R} a_w \lambda_{\rho_R w} \Big).
\]
Since $\sA_R$ and $\sB_R$ define free Hilbert transforms  \cite{MeiRicard2017FreeHilb}, we get 
\[
    \sup_{R>0} \| T_{\1_{B_R}} \|_{\B(L_p(\L \FF_r))} \leq 1 + 2 c_p \|T_{m_\rho} \|_{\B(L_p(\L \FF_r))}. \qedhere
\]
\end{proof}
   
\begin{remark} \normalfont
A more geometric argument is also possible at the cost of a quadratic bound $\|T_{\1_{B_R}}\|_{\B(L_p)} \le \|T_{m_\rho}\|_{\B(L_p)}^2$, rather than the linear bound provided above. Indeed, \emph{the free balls $B_R$ may be realized as the intersection of two translates of any given horoball $\HB_\rho$}---the traslation depending on the radius $R$. Indeed, fix a positive integer $R$ and let $x_n$ be the last letter of $\rho_n$ as above. Let $x_\perp$ be this time a generator or an inverse of a generator not belonging to $\{x_{R+1}, x_{2R}, x_{2R+1}^{-1}\}$. Define the translations $\mathrm{A}_R := \rho_R^{-1} \HB_\rho$ and $\mathrm{B}_R := x_\perp^R \rho_{2R}^{-1} \HB_\rho$. It is clear that the free ball $B_R$ is included in the intersection $\mathrm{A}_R \cap \mathrm{B}_R$. Now, given $\omega \in \FF_r \setminus B_R$, we claim that $w \notin \mathrm{A}_R$ when the first letter of $w$ is $x_\perp$ and $w \notin \mathrm{B}_R$ otherwise. This proves our assertion. 

To justify the first claim, assume that $\omega = x_\perp^k \omega'$ for some $k \ge 1$ and  $\omega \in \mathrm{A}_R \cap (\FF_r \setminus B_R)$. Since generic elements of $\mathrm{A}_R$ look like $\rho_R^{-1} \rho_n \eta$ for some $n \ge 1$ and some word $\eta$ with $|\eta| \le n$, they belong to $B_R$ for $n \le R$. When $n>R$, this would imply  $\eta = \rho_n^{-1} \rho_R \cdot \omega$ with the product $\cdot$ admitting no cancellations since $\omega = x_\perp^k \omega'$ and $x_\perp \neq x_{R+1}$. However, this would imply that $|\eta| \ge (n-R) + (R+1) > n$ contradicting the definition of $\mathrm{A}_R$.  Similarly, when $w$ does not start with $x_\perp$ we now show that $\omega$ cannot belong to $\mathrm{B}_R \cap (\FF_r \setminus B_R)$. Generic elements of $\mathrm{B}_R$ look like $x_\perp^R \rho_{2R}^{-1} \rho_n \eta$ for some $n \ge 1$ and some word $\eta$ with $|\eta| \le n$. If such $\omega$ belongs to $\mathrm{B}_R \cap (\FF_r \setminus B_R)$ we would find that the length of the words $\rho_n^{-1} \rho_{2R} \cdot x_\perp^{-R} \cdot w$ would be less than or equal to $n$ for every $n \ge 1$. Since $\omega$ does not start with $x_\perp$ and $x_\perp \notin \{x_{2R},  x_{2R+1}^{-1}\}$, the products $\cdot$ above admit no cancellations once more. This gives the lower bound $|n-R| + R + (R+1) > n$ for the length.     

\end{remark}
   
\begin{proof}[{Proof of Theorem \ref{CorFree}\ref{itm:CorFree2}}]
    Since $\HB_\rho$ is an increasing union of balls, the first assertion on the Schur multipliers $S_{M_\rho}$ follows from the uniform cb-boundedness of free balls in Theorem \ref{CorFree}\ref{itm:CorFree1} and the $S_p$-density of finite rank matrices. The second assertion follows from Lemma \ref{HoroballvsBall} and the failure of uniform $L_p$-boundedness of Fourier free balls for $|\frac12 - \frac1p| > \frac16$ from \cite{BoFen2006,Kuhn1984Convergence}.
\end{proof}

\begin{remark} \normalfont \label{Rem-Curvature} 
Let us fix a parameter $q>1$ and set $|w|_q$ to be the $q$-th analog of the word length. Namely, if $w = g_{j_1}^{k_1} g_{j_2}^{k_2} \cdots g_{j_n}^{k_n}$ is any word written in terms of the generators $g_1, g_2, \ldots$ with $j_1 \neq j_2 \neq \ldots \neq j_n$ and integer exponents, we define 
\[
    |w|_q = \Big( \sum_{\ell=1}^n |k_\ell|^q \Big)^\frac1{q}
\]
so that $|\cdot|_1$ recovers the word length. In \cite{MixedNorm2026} we use a rescaling argument to disprove the uniform $S_p$-boundedness of free $q$-balls for every $p \neq 2$ and $q>1$. It essentially shows that rescaled large free $q$-balls approximate sets admitting 2-dimensional sections which fail the zero-curvature condition in \cite{Parcet2024Idempotentschur} characterizing Schur idempotents. By the easy direction in Fourier-Schur transference---valid for nonamenable groups---free $q$-balls also fail uniform $L_p$-boundedness in the Fourier setting for $p \neq 2$. 
Before becoming aware of Theorem \ref{CorFree}\ref{itm:CorFree1}, one might have been tempted to treat the case \(q=1\) by adapting the preceding argument. Again by Fourier-Schur transference, this would imply the solution to the free ball multiplier problem left open in \cite{Kuhn1984Convergence, BoFen2006}. To our surprise, Theorem \ref{Main Thm} shows a drastic change of regime at the endpoint $q=1$ and free balls turn out to be curved in the Fourier setting---at least for $|\frac12 - \frac1p| > \frac16$--- but flat in the Schur sense of \cite{Parcet2024Idempotentschur}. It is a major challenge to find sufficient evidence of curvature---in the sense of  \cite{Fef1971}---to disprove the uniform \(L_p\)-boundedness of Fourier projections onto free balls for \(p\) close to \(2\).
\end{remark}

\subsection{Spherical functions in \texorpdfstring{$\FF_\infty$}{F-infinity}} \label{SphericalSect}

In this section we prove Theorem \ref{corFree2}, which provides new natural examples failing Fourier-Schur transference in the full range $1 < p \neq 2 < \infty$. We will prove a slightly stronger statement, namely that $T_{m_z}$ is unbounded when restricted to the (Abelian) radial part of $L_p(\L \FF_r)$. Recall that the convolution algebra of \emph{radial} functions on $\FF_r$ is Abelian. In fact, taking
\[
    \mu_n = \sum_{|\omega| = n} \delta_w \quad \mbox{and} \quad \chi_n= \sum_{|\omega| = n} \lambda_\omega,
\]
so that $\chi_n = \lambda(\mu_n)$, it is easy to check that $\mu_1 \ast \mu_1 = 2r + \mu_2$ and $\mu_1 \ast \mu_{n} = \mu_{n+1} + (2 r - 1) \mu_{n-1}$ for all $n \geq 2$. A routine diagonal argument gives that the radial algebra $\A_\rad \subset \L \FF_r$ is generated as a von Neumann algebra by $\chi_1$ and as such is Abelian. Since the work of Kesten \cite{Kesten1959} it is known that the spectrum of $\chi_1$ is $\sigma(\chi_1) = [-2 \sqrt{2 r - 1}, +2 \sqrt{2 r - 1} ]$. The restriction of the trace $\tau$ to the Abelian algebra $\A_\rad$ gives a measure over the interval that is explicitly known \cite{Kesten1959, FigaTalamancaPicardello1982}, we will refer to it as the \emph{radial Plancherel measure}. After scaling the spectrum of $\chi_1$ to the interval $[-1,1]$ we obtain  the Gelfand transform 
\[
    \Gamma: \big(\A_\rad, \tau_{\mid_{\A_\rad}}\big) \longrightarrow L_\infty\big([-1,+1], \nu_\tau \big),
\]
where the radial Plancherel measure is the probability measure 
\begin{equation}
    \label{eq:measure}
    d\nu_\tau(s)
    =\frac1{2\pi}\, \frac{4\tau\sqrt{1-s^2}}{1-4\tau(1-\tau)s^2}\,d s \qquad \mbox{with}  
    \qquad \tau = 1 - \frac1{2r} \qquad \mbox{and} \qquad -1\leq s\leq1.
\end{equation}
The $\Gamma$-image of the $L_2$ normalized coronas $\chi_n/\|\chi_n\|_2$ are 
\begin{equation*}
    \label{eq:initialQ}
    Q_0(s)=1,\qquad Q_1(s)=2\sqrt \tau\,s,\qquad
    Q_2(s)=4\sqrt \tau\,s^2-\frac1{\sqrt \tau},
\end{equation*}
and the family of orthogonal polynomials given by the recursion
\begin{equation}\label{eq:Qrecurrence} \nonumber 
 Q_{n+1}(s)=2sQ_n(s)-Q_{n-1}(s) \qquad \mbox{for} \qquad n\geq2.
\end{equation}
 
\begin{lemma} \label{prop:generating}
    For $|u|<1$ and $s\in[-1,1]$, we have
    \begin{equation}
        \label{eq:Gformula}
        \G_u(s) := \sum_{n=0}^{\infty}u^n Q_n(s)
         =
         \frac{1+2(\sqrt{\tau}-1)su+(1-\frac1{\sqrt{\tau}})u^2}{1-2su+u^2}.
    \end{equation}
    For $|u|=1$, \eqref{eq:Gformula} still makes sense by means of Abel summation.
Moreover, we get    
\begin{enumerate}[label={\rm \textbf{(\roman*)}}, ref={(\roman*)}]
        \item \label{itm:pNorm1} If $\alpha \in (0,\pi)$ and $\beta \in (0,1)$ then
        \begin{equation}\label{eq:interior-asymptotics}
            \| \G_{\beta e^{i \alpha}} \|_p
            \simeq_{(r,p,\alpha)}
            \begin{cases}
                \log\!\dfrac1{1-\beta},& \text{ when } p=1,\\[4mm]
                (1-\beta)^{\frac1{p}-1},& \text{ when } 1<p\leq\infty.
            \end{cases}
        \end{equation}

        \item \label{itm:pNorm2} Similarly, if $\alpha = 0$ and $\beta \in (0,1)$, it holds that
        \begin{equation}
            \label{eq:endpoint-asymptotics}
            \|\G_{\beta} \|_p
            \simeq_{(r,p)}
            \begin{cases}
                1, & \text{ when } 1\leq p<\dfrac{3}{2},\\[3mm]
                \Big( \log\!\dfrac1{1-\beta} \Big)^{\frac{2}{3}},& \text{ when } p=\dfrac32,\\[4mm]
                (1-\beta)^{\frac{3}{p}-2},& \text{ when } \dfrac32<p\leq \infty,
            \end{cases}
        \end{equation}
    \end{enumerate}
\end{lemma}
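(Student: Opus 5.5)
The plan has two parts: derive the closed form \eqref{eq:Gformula} by standard generating-function manipulations, then read off the $L_p(\nu_\tau)$ asymptotics from a local analysis of $\G_u$ near its poles.

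\textbf{The closed form.} The recursion $Q_{n+1}=2sQ_n-Q_{n-1}$ holds for $n\ge 2$. Multiply by $u^{n+1}$ and sum over $n\ge2$:
\[
(1-2su+u^2)\,\G_u(s) = Q_0 + u(Q_1-2sQ_0) + u^2(Q_2-2sQ_1+Q_0).
\]
It then remains to insert $Q_0=1$, $Q_1=2\sqrt\tau s$ and $Q_2=4\sqrt\tau s^2-1/\sqrt\tau$. This gives
\[
Q_1-2sQ_0=2(\sqrt\tau-1)s, \qquad Q_2-2sQ_1+Q_0=1-\tfrac1{\sqrt\tau},
\]
which is exactly the numerator in \eqref{eq:Gformula}. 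Convergence for $|u|<1$ is clear: on $[-1,1]$ the $Q_n$ are Chebyshev-type combinations, so $|Q_n(s)|\lesssim n+1$. For $|u|=1$ the identity is interpreted by Abel summation, namely $\lim_{\beta\to1^-}\G_{\beta u}$. This limit exists pointwise away from the zeros of the denominator.

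\textbf{Asymptotics.} Write $s=\cos\theta$. The denominator factors as $(1-ue^{i\theta})(1-ue^{-i\theta})$. With $u=\beta e^{i\alpha}$ it therefore vanishes in the limit only near $\theta=\pm\alpha$, that is, near $s=\cos\alpha$.

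\emph{Case \ref{itm:pNorm1}, $\alpha\in(0,\pi)$.} The point $s_0=\cos\alpha$ lies in the interior of $(-1,1)$. There the density of $\nu_\tau$ in \eqref{eq:measure} is smooth and bounded above and below. Only one of the two factors is small, and near $s_0$,
\[
|1-\beta e^{i(\alpha-\theta)}| \simeq (1-\beta)+|\theta-\alpha| \simeq (1-\beta)+|s-s_0|.
\]
One checks that the numerator does not vanish at $s_0$ when $u\to e^{i\alpha}$ (this depends on $r$ and $\alpha$, hence the implicit constants). So $|\G_u(s)|\simeq((1-\beta)+|s-s_0|)^{-1}$ locally, and $\G_u$ is bounded elsewhere. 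Integrating $((1-\beta)+t)^{-p}$ over $t\in(0,1)$ gives $\log\frac1{1-\beta}$ for $p=1$ and $(1-\beta)^{1-p}$ for $p>1$. Taking $p$-th roots yields \eqref{eq:interior-asymptotics}. For $p=\infty$ the supremum is $\simeq(1-\beta)^{-1}$.

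\emph{Case \ref{itm:pNorm2}, $\alpha=0$.} Now the singularity sits at the spectral edge $s=1$, where both factors degenerate. The denominator is
\[
(1-\beta)^2+2\beta(1-s) \simeq (1-\beta)^2+\theta^2.
\]
At $u=1$, $s=1$ the numerator equals $2-2\sqrt\tau... $; more precisely, it is $1+2(\sqrt\tau-1)+1-1/\sqrt\tau=(\sqrt\tau-1)^2/\sqrt\tau\cdot(\ldots)$, which one verifies is nonzero since $\tau<1$. Meanwhile the density behaves like $\sqrt{1-s}\,ds\simeq\theta^2\,d\theta$. So, with $\epsilon=1-\beta$, the task is to estimate
\[
\int_0^1 (\epsilon^2+\theta^2)^{-p}\,\theta^2\,d\theta.
\]
This is bounded when $2p<3$, comparable to $\log\frac1\epsilon$ when $p=\frac32$, and comparable to $\epsilon^{3-2p}$ when $p>\frac32$. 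Taking $p$-th roots gives \eqref{eq:endpoint-asymptotics}: for $p=\frac32$ the power of the logarithm is $\frac1p=\frac23$, and for $p>\frac32$ the exponent is $\frac3p-2$, with $p=\infty$ giving $\epsilon^{-2}$.

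\textbf{Main obstacle.} The delicate point is making sure the numerator does not vanish at the singular point, so that no cancellation weakens the blow-up. For $\alpha=0$ this is the explicit check above. For general $\alpha$ it is a short computation with the numerator evaluated at $s=\cos\alpha$, $u=e^{i\alpha}$. This is where the dependence of the constants on $r$ and $\alpha$ enters.
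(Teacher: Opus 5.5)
Your route is the paper's: you obtain the closed form from the three-term recursion and then reduce to the model integrals $\int(\epsilon^2+(\theta-\alpha)^2)^{-p/2}d\theta$ and $\int\theta^2(\epsilon^2+\theta^2)^{-p}d\theta$. The only difference is that you fold the boundary terms directly into $(1-2su+u^2)\G_u$, whereas the paper shifts $Q_0$ to $1/\sqrt\tau$ and corrects by a constant; the two are equivalent.

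\textbf{Gap.} The step you yourself single out as the main obstacle is not established: that $F_\tau(s,u)=1+2(\sqrt\tau-1)su+(1-\frac1{\sqrt\tau})u^2$ does not vanish at the singular point.
\begin{itemize}
\item At $(s,u)=(1,1)$ the sum $1+2(\sqrt\tau-1)+1-\frac1{\sqrt\tau}$ simplifies to $2\sqrt\tau-\frac1{\sqrt\tau}=\frac{2\tau-1}{\sqrt\tau}$. It is not a multiple of $(\sqrt\tau-1)^2$.
\item It is nonzero because $\tau>\frac12$, i.e.\ $r\ge2$, not because $\tau<1$. Your stated reason cannot be right: for $\tau=\frac12$ ($r=1$, the group $\ZZ$) the numerator does vanish at $(1,1)$. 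It then cancels one order of the double pole, and the edge behaviour is different; the measure also degenerates there, since $4\tau(1-\tau)=1$.
\item In case (i) you only assert the check. It reads $F_\tau(\cos\alpha,e^{i\alpha})=\sqrt\tau+(\sqrt\tau-\frac1{\sqrt\tau})e^{2i\alpha}$, whose modulus is at least $2\sqrt\tau-\frac1{\sqrt\tau}$. This is again positive exactly because $\tau>\frac12$.
\end{itemize}

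\textbf{How the paper handles it.} It avoids pointwise checks and continuity arguments altogether. From $F_\tau(s,u)-1=(\sqrt\tau-1)\,u\,(2s+u/\sqrt\tau)$ one gets $|F_\tau(s,u)-1|\le(1-\sqrt\tau)(2+\frac1{\sqrt\tau})$. Hence $2\sqrt\tau-\frac1{\sqrt\tau}\le|F_\tau(s,u)|\le 4$ uniformly for $|u|\le1$, $s\in[-1,1]$. Combined with $d\nu_\tau(\cos\theta)\simeq_{(r)}\sin^2\theta\,d\theta$ (which also uses $\tau\neq\frac12$), this gives
$\|\G_u\|_p^p\simeq\int_0^\pi\sin^2\theta\,|1-ue^{i\theta}|^{-p}|1-ue^{-i\theta}|^{-p}\,d\theta$ for all $|u|\le1$ at once, and both cases follow. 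If you insert this one-line bound in place of your computation, the rest of your argument goes through as written.
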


\begin{proof}
Define $\widetilde{Q}_0 := \frac1{\sqrt{\tau}}$ and $\widetilde{Q}_n := Q_n$ for $n \geq 1$. This gives 
    \begin{eqnarray*}
        \widetilde{\G}_u(s) \; := \; \sum_{n = 0}^\infty u^n \widetilde{Q}_n(s)
        & = & \frac1{\sqrt{\tau}} + 2 \sqrt{\tau} s u + \sum_{n = 2}^\infty \widetilde{Q}_n(s) u^n\\
        & = & \frac1{\sqrt{\tau}} + 2 \sqrt{\tau} s u + \sum_{n = 2}^\infty u^n \big[ 2 s \, \widetilde{Q}_{n-1}  (s)- \widetilde{Q}_{n-2}(s) \big] \\
        & = & \frac1{\sqrt{\tau}} + 2 \sqrt{\tau} s u + 2 s u \sum_{n = 1}^\infty \widetilde{Q}_{n} (s) u^n - u^2 \sum_{n=0}^\infty \widetilde{Q}_{n}(s) u^n\\
         & = & \frac1{\sqrt{\tau}} + 2 \sqrt{\tau} s u + 2 s u \, \Big[ \widetilde{\G}_u(s) - \frac1{\sqrt{\tau}} \Big] - u^2 \, \widetilde{\G}_u(s).
    \end{eqnarray*}
Identity \eqref{eq:Gformula} follows rearranging and using $$\G_u(s) = \widetilde{\G}_u(s) + \Big( 1 - \frac{1}{\sqrt{\tau}} \Big).$$
Next, we do the change of variable $s = \cos \theta$ to obtain 
    \begin{equation}\label{eq:theta-measure}
        d\nu_\tau(\cos\theta)
        =\frac{2\tau}{\pi}\,
        \frac{\sin^2 \theta}{1-4\tau(1-\tau)\cos^2\theta}\,d\theta.
    \end{equation}
    Since $\tau >\tfrac12$, we easily get $d\nu_\tau(\cos\theta) \simeq_{(r)} \sin^2 \theta \, d\theta$ since $4\tau(1-\tau) < 1$ and the denominator in \eqref{eq:theta-measure}  
is bounded below by a nonzero constant. Similarly, the numerator in \eqref{eq:Gformula} is comparable to a constant depending on $\tau$. Indeed, the function $F_{\tau}(s,u) = 1+2(\sqrt{\tau}-1)su+(1-\frac1{\sqrt{\tau}})u^2$ is clearly bounded above by 4 and observe that $$|F_\tau(s,u)-1| = \Big| (\sqrt{\tau}-1) u (2s+\frac{u}{\sqrt{\tau}}) \Big| \le (1-\sqrt{\tau}) \Big( 2 + \frac{1}{\sqrt{\tau}} \Big).$$ This implies $|F_\tau(s,u)| \ge 1-(1-\sqrt{\tau})(2+\frac{1}{\sqrt{\tau}}) = 2 \sqrt{\tau} - \frac{1}{\sqrt{\tau}} = C_\tau > 0$ and we get for $1\leq p<\infty$
$$\|\G_u \|_{p}^p \, \simeq_{(r,p)} \int_0^\pi \frac{\sin^2 \theta \, d\theta}{|1-ue^{i\theta}|^p|1-ue^{-i\theta}|^p} \qquad \mbox{for any} \qquad |u|\leq1.$$
Now we can prove the two final statements: 
    
    To prove {\ref{itm:pNorm1}}, fix $\alpha\in(0,\pi)$ and take $u=\beta e^{i\alpha}$.  Near $\theta=\alpha$
    \[
        \big| 1-\beta e^{i(\alpha-\theta)} \big|^2 \simeq (1-\beta)^2+(\theta-\alpha)^2,
    \]
    while the other factor in the denominator and $\sin\theta$ remain bounded above and below up to some constants depending on $\alpha$.  Therefore, since the integral above is smaller when $|\alpha - \theta|$ is large, we get the norm equivalence
    \[
        \|\G_{\beta e^{i\alpha}}\|_p^p
        \simeq_{(r,p,\alpha)}
        \int_{-1}^{1}\frac{d v}{((1-\beta)^2+v^2)^{p/2}}
    \]
    and we obtain the norm equivalence \eqref{eq:interior-asymptotics} by integrating the right hand side.

    For {\ref{itm:pNorm2}}, take $u=\beta \in(0,1)$.  Near $\theta=0$
    \[
        \big| 1-2\beta\cos\theta+\beta^2 \big|
        = \big| 1-\beta e^{i\theta} \big|^2
         \simeq (1-\beta)^2+\theta^2.
    \]
    Using the quadratic vanishing of $\sin^2 \theta$ near $\theta = 0$, one obtains
    \begin{equation}\label{eq:endpoint-integral}
        \|\mathcal G_{\beta} \|_p^p
         \simeq_{(r,p)}
        1+\int_0^1\frac{\theta^2}{((1-\beta)^2+\theta^2)^p}\,d\theta.
    \end{equation}
    Again, we obtain the norm equivalence \eqref{eq:endpoint-asymptotics} by integrating the right hand side.
\end{proof}

\begin{proof}[{Proof of Theorem \ref{corFree2}}]
    Observe that the Herz-Schur multiplier $H_{m_z}$ is an instance of the Schur multiplier $\Delta_z$ considered in Section \ref{Sect1}. In particular, the first assertion in Theorem \ref{corFree2} follows. Next, we show the unboundedness of $T_{m_z}$ by proving that its restriction to the radial subalgebra $\A_\rad$ is still unbounded. Using the Gelfand transform we may work instead with the operator on $L_2(\nu_\tau)$ given by $$\mathcal{J}_z = \Gamma \circ {T_{m_z}}_{\mid_{\A_\rad}} \circ \Gamma^{-1}$$ by radiality of the symbol $g \mapsto z^{|g|}$. Noting  that $$\mathcal{J}_z(\G_u) = \G_{z u},$$ the last assertion in the statement follows from the norm equivalences \eqref{eq:interior-asymptotics} and \eqref{eq:endpoint-asymptotics}.
\end{proof}

\begin{remark} \normalfont
For $u=e^{i\alpha}$ with $\alpha \in (0,\pi)$, the rational function $\G_u$ has a simple pole at $s=\cos\alpha$ where the measure has positive density.  Thus, it does not belong to $L_p(\nu_\tau)$ for any $p\geq1$.  At $u=\pm1$ the pole is at an endpoint and $\G_{\pm1} \in L_p(d\nu_\tau) \Leftrightarrow p<\frac32$. If $|u|>1$, the same rational formula defines a smooth function on $[-1,1]$; the denominator has no zero on that interval.  Approaching $\TT$ from the outside gives analogous asymptotics with $|\,|u|-1\,|$ in place of $1-\beta$.
\end{remark}

\section{Non-discrete algebraic groups and \texorpdfstring{$\sK$}{K}-biinvariant multipliers}

Assume that $\sG$ is a nondiscrete l.c.s.c. group admitting a lattice $\Gamma \leq \sG$, hence unimodular. Let $\Omega \subset \sG$ be a fundamental domain containing the identity $e$ in its interior. Pick $V \subset \Omega$ a symmetric open neighborhood of $e$ with $V V \subset \Omega$. It is possible to extend a function $m \colon \Gamma \to \CC$ to the whole group $\widetilde{m} \colon \sG \to \CC$ by 
\[
    \widetilde{m} = \frac{\1_V}{\sqrt{\mu(V)}} \ast \Big( \sum_{\gamma \in \Gamma} m(\gamma) \, \delta_\gamma\Big) \ast \frac{\1_V}{\sqrt{\mu(V)}},
\]
with $\mu$ the Haar measure on $\sG$. This extends $m$ and preserves the boundedness properties of $m$ by \cite[Lemma~2.6]{LaffSall2011}, both as a Fourier $L_p$-multiplier and as a Schur $S_p$-multiplier. Consider a symbol $m$ that induces an unbounded Fourier multiplier on $L_p(\L \Gamma)$ but a completely $S_p$-bounded Herz-Schur multiplier. According to \cite{LaffSall2011,WangXiaYao2025}, the extended symbol $\widetilde{m}$ leads as well to a completely $S_p$-bounded Herz-Schur multiplier on $S_p(\sG)$. On the other hand, the natural way to prove the unboundedness of the corresponding Fourier multiplier on $L_p(\L \sG)$ would be to use a de Leeuw subgroup restriction estimate in the spirit of \cite{CasParPerrRic2014}, but this requires that $\sG$ has the small almost $\Gamma$-invariant neighborhood property, which free group lattices like $\FF_2 \leq \SL_2(\RR)$ fail.  

\subsection{The group \texorpdfstring{$\mathbf{PGL}_\text{\textbf{2}}(\QQ_\text{\textbf{q}})$}{PGL(2,Qq)}}
Taking $\sG = \PGL_2(\QQ_q)$ and $\sK = \PGL_2(\ZZ_q)$, it is well known that $(\sG, \sK)$ forms a Gelfand pair. The space $X_q = \sG/\sK$ can be identified with a $(q+1)$-homogeneous tree, a particular case of a Bruhat-Tits building in a way that $\sK$-biinvariant functions on $\sG$, when lifted to $X_q$, correspond to radial functions. Furthermore, the Cartan length satisfies the identity $\ell_\mathrm{Car}(g) = d(g\sK, \sK)$, where $d: X_q \times X_q \to \NN_0$ is the graph distance. 

\begin{proof}[Proof of Theorem \ref{CorSimple}\ref{itm:CorSimple1}]
We have 
\[
  L_2(\sG)\simeq \ell_2(\sG/\sK) \otimes_2 L_2(\sK) = \ell_2(X_q) \otimes_2 L_2(\sK),
\]
so that $S_p(\sG) \simeq S_p(X_q) \otimes_p S_p(\sK)$. Since $\ell_\mathrm{Car}(g^{-1}h) = d(g^{-1}h\sK, \sK) = d(h\sK, g\sK)$, the complete boundedness of $S_{M_z}$ reduces to that of the Schur multiplier $\Delta_z$ with symbol $z^{d(x,y)}$, considered in Section \ref{Sect1}. For the unboundedness of the Fourier multipliers $T_{m_z}$, we follow the strategy in Subsection \ref{SphericalSect} using that the convolution algebra of $\sK$-biinvariant functions is Abelian. When taking weak-$\ast$ closures, this $\sK$-biinvariant algebra can be identified with the finite corner
\[
    \L(\sK \backslash \sG/\sK) \simeq p_\sK \, \L \sG \, p_\sK \qquad \mbox{ where } \qquad p_\sK = \int_{\sK} \lambda_k \, d \mu_\sK(k),
\]
and $\mu_\sK$ is the normalized Haar measure on $\sK$. The trace of $p_\sK$ is finite and thus we can identify such corner with its Gelfand spectrum and $\tau$ restricted to the $\sK$-biinvariant part with a finite measure
\begin{equation}
     \big( p_\sK \, \L \sG \, p_\sK, \tau \big) \longrightarrow L_\infty\big( [-2,2]; \mu_q\big),
\end{equation}
where 
\[
d\mu_q(x)=
 \frac{q+1}{(q^{1/2}+q^{-1/2})^2-x^2}
 \frac{\sqrt{4-x^2}}{2\pi}\,
\,dx \qquad \mbox{with} \quad -2 \le x \le 2.
\]
The explicit form of this isomorphism can be given by the spherical transform, which sends a suitable normalization of the characteristic functions of the spheres to the corresponding family of orthogonal polynomials. Namely, the $n$-th sphere
\[
    \sK \, \begin{bmatrix} q^n&0\\0&1\end{bmatrix} \, \sK
\]
is sent to $P_n(s)$, where we set 
\[
    P_0(s)=1,\qquad P_1(s)=s, \qquad P_n(s)=U_n(s/2)-q^{-1}U_{n-2}(s/2),
\]
where $U_n$ is the Chebyshev polynomial of the second kind and $n\ge2$, we refer to \cite{HasegawaSaigoSaitoSugiyama2018} and references therein for further details. Now, explicit computing of the generating function for $(P_n(s))_{n \geq 0}$ and its $L_p(\mu_q)$-norms for $\beta$ and $\beta e^{i \alpha}$ gives the result as in Lemma \ref{prop:generating}. 
\end{proof}

\subsection{Radial multipliers in \texorpdfstring{$\mathbf{PSL_{\text{\textbf{2}}}(\RR)}$}{PSL(2,R)}}

Given this negative result in the $q$-adic case, it is tempting to use a similar strategy over the real group $\PSL_2(\RR)$. In this section, we show that the analogous Schur multipliers are not uniformly $S_p$-bounded. Let 
\begin{equation} \label{L-length}
    L(g) = \mathrm{arccosh}(\tfrac{1}{2}\| g \|_{\mathrm{HS}}^2),
\end{equation}
where $\|\cdot\|_\mathrm{HS}$ denotes the Hilbert-Schmidt norm. This is a $\sK$-biinvariant length for the maximal compact subgroup $\sK=\mathrm{PSO}(2)$, which plays the role of the Cartan length above. Indeed, given a Cartan decomposition 
\[
    g = k_1 \, 
    \begin{bmatrix} 
        e^{s/2} &0\\
        0&e^{-s/2}
    \end{bmatrix} 
    \, k_2
\]
we have $L(g) = s$ as expected. The next result shows a different behavior of multipliers in $\PSL_2(\RR)$.  

\begin{theorem} 
    \label{ThmBallsSL2}
    Consider the symbols $m_s \colon \PSL_2(\RR) \to \CC$ given by $m_s(g) = e^{2 \pi i s L(g)}$ for any $s \in \RR$. 
    The family of Herz-Schur multipliers $\{H_{m_s} \colon s \in \RR\}$ is not uniformly bounded on $S_p$. In other words
    \begin{equation}
        \label{eq:NonUniformSL2}
        \sup_{s \in \RR} \big\| H_{m_s} \colon S_p(\PSL_2(\RR)) \to S_p(\PSL_2(\RR)) \big\| = \infty \quad \text{for every} \quad p \neq 2.
    \end{equation}
\end{theorem}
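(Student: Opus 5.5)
Suppose, towards a contradiction, that $\sup_{s} \|H_{m_s}\colon S_p\to S_p\| = C<\infty$ for some $p\neq 2$; by duality we may assume $p>2$. The plan is to show that uniform boundedness of the phases $e^{2\pi i s L}$ forces \emph{every} radial symbol $\phi(L(g))$ with $\phi$ an $L_p(\RR)$-Fourier multiplier to be $S_p$-bounded on $\PSL_2(\RR)$. This is the analogue of the intertwining \eqref{eq:Intertwining}, now with $\TT$ replaced by $\RR$. Concretely, for $\phi = \check{\mu}$ with $\mu$ a finite measure we have $H_{\phi\circ L} = \int H_{m_s}\,d\mu(s)$. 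More generally, define
\[
\Delta\colon S_p(\sG)\to L_p^{\mathrm{loc}}(\RR;S_p(\sG)), \qquad \Delta(A)(s) = H_{m_s}(A).
\]
On time-localized pieces one gets $\Delta\circ H_{\phi\circ L} = (T_\phi\otimes \Id)\circ\Delta$, and $A = H_{m_{-s}}H_{m_s}A$ gives a quasi-isometry after averaging over a window in $s$. Hence, up to constants depending on $C$, every $L_p(\RR)$-bounded even multiplier $\phi$ (for instance $\1_{[-R,R]}$, or smooth truncations of it) yields an $S_p$-bounded Schur multiplier $\phi(L(g^{-1}h))$. In particular the hyperbolic ball multipliers $\1_{\{L\le R\}}$ would be uniformly $S_p$-bounded.

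The second step is to contradict this using the geometry of the hyperbolic plane $\HH^2=\sG/\sK$, where $L(g^{-1}h)=d_{\HH}(g\sK,h\sK)$. Exactly as in the proof of Theorem \ref{CorSimple}\ref{itm:CorSimple1}, the symbol reduces to the Schur multiplier with kernel $\1_{d(x,y)\le R}$ on $L_2(\HH^2)$. Restrictions of Schur multipliers to subsets are contractive \cite{LaffSall2011}, so we restrict to a small region and rescale. Taking $R\to 0$ together with a dilation, hyperbolic balls of radius $R$ around points in a fixed small chart approximate Euclidean balls; the metric is $(1+O(R))$-Euclidean at that scale. The restricted multipliers $\1_{|x-y|\le 1}$ on $\RR^2\times\RR^2$ would then be bounded on $S_p(L_2(\RR^2))$. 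Alternatively one can keep $R$ large and use that horocyclic geometry produces curved boundaries. In either version, Fourier–Schur transference for the amenable group $\RR^2$ \cite{NeuRic2011, CasSall2015} converts this into $L_p(\RR^2)$-boundedness of the Euclidean ball multiplier, which contradicts Fefferman's theorem \cite{Fef1971} for every $p\neq 2$. A cleaner route to the same contradiction is the zero-curvature criterion for Schur idempotents in \cite{Parcet2024Idempotentschur}: the set $\{(x,y): d(x,y)\le R\}$ has curved two-dimensional sections, so it cannot be an $S_p$-bounded idempotent. Curvature here is stable under the limit.

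The main obstacle is the first step. In the tree case the transference rested on the exact $\ZZ$-valued, $\TT$-periodic structure of $z^{d}$, so the identity $A=\Delta_{\bar z}\Delta_z A$ integrated against the normalized Haar measure on $\TT$. On $\RR$ there is no invariant probability measure, so I would average over $s\in[-N,N]$ and pass to a limit. I would also first reduce to $\phi$ compactly supported in frequency, or to Schwartz $\phi$ whose Fourier transform is a finite measure, to avoid needing cb-bounds. If that reduction is too lossy, I would instead use only the finite-measure case: superposing the $m_s$ against an $L_1$ kernel gives uniformly bounded smooth cutoffs $\psi((L-R)/\varepsilon)$. These, by the curvature argument of \cite{Parcet2024Idempotentschur}, already have to blow up as $\varepsilon\to 0$ at fixed $R$, since their $S_p$-limits are the ball idempotents. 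That yields \eqref{eq:NonUniformSL2} without the full intertwining.
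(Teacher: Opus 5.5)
Your main line is correct in outline, but it is not the proof the paper gives for Theorem~\ref{ThmBallsSL2}. It is essentially the alternative argument sketched in Remark~\ref{rmk:IntertwiningSp}. There, as you propose, uniform boundedness of the phases is turned into $\|H_{\varphi\circ L}\|\lesssim C^2\|T_\varphi\|_{\mathrm{cb}}$ by a de Leeuw-type approximate intertwining; the paper uses Gaussian windows of variance $1/\delta$ where you average over $[-N,N]$. Taking $\varphi=\1_{[-R,R]}$ then contradicts the characterization of Schur idempotents in \cite{Parcet2024Idempotentschur}, and the jump of $\1_{[-R,R]}$ is harmless because $\{L=R\}$ is null.

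The paper's actual proof skips the intertwining altogether. It applies the blow-down $\phi_\delta(\exp_i v)=\exp_i(\delta v)$ directly to the phases with $s=1/\delta$, so that $e^{2\pi i s\, d_{\HH^2}(\phi_\delta(x),\phi_\delta(y))}\to e^{2\pi i|v_x-v_y|}$ uniformly on compacts. By \cite[Theorem 1.19]{LaffSall2011} the limit is an $S_p$-bounded Schur multiplier on $\RR^2$. Transference on the amenable group $\RR^2$ \cite{CasSall2015} would then make the half-wave propagator $e^{2\pi i|\xi|}$ bounded on $L_p(\RR^2)$, which fails for every $p\neq 2$ \cite{Miyachi1980,Peral1980}.

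So your own Step~2 rescaling, applied to $m_{1/R}$ rather than to the balls, already finishes the proof. The wave propagator replaces Fefferman's theorem, the symbols are continuous and converge uniformly rather than being indicators converging only almost everywhere, and the step you single out as the main obstacle is not needed. What your route buys in exchange is a stronger conditional statement: an $\HH^2$-analogue of the upper bound in Theorem~\ref{Main Thm}, transferring every completely bounded $L_p(\RR)$ multiplier $\varphi$ to the radial Schur multiplier $\varphi\circ L$. The price is the de Leeuw approximation plus a Kakeya/curvature input.

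The fallback in your last paragraph, however, does not work, for either reading of $\psi$:
\begin{itemize}
\item If $\psi$ is a bump, the Fourier transform of $\psi((\cdot-R)/\varepsilon)$ has $L_1$-norm $\|\widehat\psi\|_{L_1}$, so the superposition is indeed uniformly bounded. But $\psi((L-R)/\varepsilon)\to 0$ almost everywhere, since the sphere $\{L=R\}$ is null. The limit is not the ball idempotent.
\item If $\psi$ is a smoothed step, so that $\psi((R-|t|)/\varepsilon)$ approximates $\1_{[-R,R]}$, its Fourier transform has $L_1$-norm of order $\log(R/\varepsilon)$. Superposition then only gives the bound $C\log(R/\varepsilon)$. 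A contradiction would need the smoothed idempotents to blow up faster than $\log(1/\varepsilon)$, and the qualitative criterion of \cite{Parcet2024Idempotentschur} gives no such rate.
\end{itemize}
Either the full intertwining or the direct blow-down of the phases is genuinely needed.
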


\begin{proof}
    Let us denote $\sG = \PSL_2(\RR)$ and $\sK = \PSO(2)$. Since $m_s\colon \sG \to \CC$ is $\sK$-biinvariant, $M_s(g,h) = m_s(g^{-1}h)$ naturally lifts to the homogeneous space $\sG/\sK$ as $\widetilde{M}_s$. Identifying $\sK \backslash \sG$ with the hyperbolic upper half space $\HH^2$, and using that $\sK \acts \sG/\sK$ acts by hyperbolic rotations around $i \in \HH^2$, we obtain that $\widetilde{M}_s$ is a radial multiplier on the Riemannian distance below
    \[
        \widetilde{M}_s(x,y) = e^{ 2 \pi i \, s \, d_{\HH^2}(x,y)}.
    \]
    We have the following limit, which holds uniformly over compact subsets of $\HH^2 \times \HH^2$:
    \[
        \lim_{\delta \to 0^+} \frac{d_{\HH^2}\big( \exp_i(\delta v), \, \exp_i(\delta w) \big)}{\delta} = |w - v|,
    \]
    where $v,w$ are tangent vectors at $i \in \HH^2$ and $\exp_i$ is the geodesic flow starting at $i$. Given $x \in \HH^2$ there is a unique geodesic connecting it with $i$. Let us denote by $\phi_\delta(x)$ the change of variable given by sending $x$ to the point $\phi_\delta(x)$ along the geodesic connecting $i$ and $x$ whose distance from $i$ is a fraction $\delta$ of $d_{\HH^2}(x,i)$. That is
    \[
        \phi_\delta(\exp_i(v)) := \exp_i(\delta v).
    \]
    The map $\phi_t$ is a smooth diffeomorphism and, in particular, it preserves the measure class. By \cite[Lemma 2.1.]{Parcet2024Idempotentschur} the operator $S_p$ norms of $\widetilde{M}_s(x,y)$ and $\widetilde{M}_s(\phi_t(x),\phi_t(y))$ coincide. Assuming by contradiction that the supremum on \eqref{eq:NonUniformSL2} is bounded, we can take $s = \tfrac{1}{\delta}$, so that
    \[
        \lim_{\delta \to 0^+} e^{ 2 \pi i \, \tfrac{1}{\delta} \, d_{\HH^2}(\phi_\delta(x),\phi_\delta(y))} 
        = e^{ 2 \pi i |v_x - v_y|},
    \]
    where $v_x$ represents the tangent vector of the unique geodesic from $i$ to $x$. By assumption, the $\delta$-dependent family of Schur multipliers above is uniformly bounded. The limit converges uniformly over compact sets. Thus, the limit is also an $S_p$-bounded Schur multiplier by  \cite[Theorem 1.19]{LaffSall2011}. But, since $\RR^2$ is an amenable group, by \cite{CasSall2015}, that would imply the boundedness of the half-wave propagator $e^{2 \pi i \, |\xi|}$, which is known to be unbounded as a Fourier multiplier on $L_p(\RR^2)$. We refer to \cite{Miyachi1980,Peral1980} or to \cite{BeltranRoosRutarSeeger2025} for a more modern treatment.
\end{proof}

\begin{remark} \normalfont
    \label{rmk:IntertwiningSp}
    There is an alternative proof of Theorem \ref{ThmBallsSL2}. Indeed, assume by contradiction that the family $\{H_{m_s}: s \in \RR \}$ is indeed uniformly bounded in $S_p(\sG)$. Then, it induces a bounded map $\Lambda \colon S_p(\sG) \to L_\infty(\RR; S_p(\sG))$ given by sending $A \in S_p(\sG)$ to the function $s \mapsto H_{m_s}(A)$. This map doesn't preserve the Lebesgue measure in $\RR$, since it is not a probability measure. Therefore it does not immediately extend to $L_p(\RR; S_p)$. However this can be fixed by choosing Gaussian probability measures $d \mu_\delta(x) = \gamma_\delta(x) dx$ of variance $\frac1{\delta}$. Then, by the finiteness of $\mu_\delta$, we have that $\Lambda_\delta: S_p(\sG) \to L_\infty(\RR, d\mu_\delta; S_p(\sG))$ are quasi-isometries. Now, we can use that multiplication by a Gaussian intertwines in $L_p$-sense with the action of the Fourier multiplier in the limit as $\delta \to 0$, ie:
    \[
        {\delta^{-\frac1{2p}}} \, \big\| \gamma_\delta^{\frac1{p}} (T_\varphi \otimes \Id)[F] -  (T_\varphi \otimes \Id)[\gamma_\delta^{\frac1{p}} F] \big\|_{L_p(\RR;S_p)} \to 0,
        \; \text{ as } \, \delta \to 0^+.
    \]
    This property goes back to de Leeuw's celebrated paper \cite{Lee1965}, but the exact same argument needed here can be found in \cite[Proposition 1.7]{ParcetSLn2018} or \cite[Section 2.5]{ParRog2016Hilbert}. Thus, in the limit $\delta \to 0$,  $\Lambda_\delta$ intertwines $H_{\varphi \circ L}$ with $T_{\varphi} \otimes \Id_{S_p}$ and thus
    \[
        \big\| H_{\varphi \circ L}\colon S_p(\sG) \to S_p(\sG) \big\|  \; \lesssim \; \big\| T_\varphi: L_p(\RR) \to L_p(\RR)\big \|_{\cb}.
    \]
    Taking $\varphi = \1_{[-R,R]}$, we would get the $L_p$-boundedness of the multiplier induced by geodesic balls on $\HH^2$, which contradicts the characterization of Schur idempotents with a smooth boundary in \cite{Parcet2024Idempotentschur}. 
\end{remark}

\subsection{\texorpdfstring{$\sK$}{K}-biinvariant counterexamples in \texorpdfstring{$\mathbf{PSL_{\text{\textbf{2}}}(\RR)}$}{PSL(2,R)}}

\newcommand{\Do}{\Omega}

Now we take $$(\sG, \Gamma, \sK) = \big( \PSL_2(\RR), \PSL_2(\ZZ), \PSO(2) \big).$$ 
Then $\Gamma = \big\langle a,b\mid a^2=b^3=e \big\rangle = \mathrm{A} \ast \mathrm{B}$, where $\mathrm{A}$ and $\mathrm{B}$ are respectively generated by \[
 a=\begin{bmatrix}0&-1\\1&0\end{bmatrix} \quad \text{and} \quad
 b=\begin{bmatrix}0&-1\\1&1\end{bmatrix}.
\]
Let \(T\) be its Bass--Serre tree, with vertex set \(\Gamma/\mathrm{A}\sqcup\Gamma/\mathrm{B}\) and edges $\{\gamma \mathrm{A},\gamma \mathrm{B}\} $ for $\gamma\in \Gamma$. Fix \(o=\mathrm{A}\) as root and denote by $d_{T}$ the standard distance on the tree $T$. We define the vertex-displacement length $\ell_T \colon \Gamma \to \NN_0$ by
\[
  \ell_T(\gamma)=d_{T}(o,\gamma^{-1}o)\quad \text{for} \quad \gamma\in\Gamma.
\]
$\ell_T$ is a quasi-length since it is null on $\mathrm{A}$. Nevertheless, it is symmetric and satisfies the triangle inequality $\ell_T(\eta \gamma) \leq \ell_T(\gamma) + \ell_T(\eta)$. Recall as well that $T$ can be interpreted geometrically as the orbit on the hyperbolic plane of the geodesic arc $[i,e^{\frac{2\pi i}{3}}]$ under the action of the modular group $\Gamma = \PSL_2(\ZZ)$. 
A reduced word $\gamma \in \Gamma$ is just an alternating succession of $a$ and $b^{\pm1}$. A simple computation gives that
\begin{equation} \label{Eq-nB}
    \ell_T(\gamma) = 2 n_\mathrm{B}(\gamma),
\end{equation}
where $n_\mathrm{B}(\gamma)$ counts the number of appearances of $b^{\pm1}$ when writing $\gamma$ in reduced form.

The quotient $\Gamma \backslash \sG$ admits a right $\sK$-invariant fundamental domain $\Do \subset \sG$. That is, a measurable set $\Do$ whose left $\Gamma$-translates $\{ \gamma \Do \}_{\gamma \in \Gamma}$ form a partition of $\sG$ up to a measure-zero set and where $\Do k = \Do$ for every $k \in \sK$.
Here, the set equality should be understood again up to null sets. The construction of such domain can be made explicit. $\sG$ has a unique $\sA\sN\sK$ decomposition, where the solvable group $\sA \sN$ can be identified as usual with the hyperbolic plane $\HH^2$. Take the standard fundamental domain of the isometric action of $\Gamma$ on $\HH^2$ given by
\[
    \mathcal{S} := \Big\{ z \in \HH^2 : -\tfrac12 \leq |\Re(z)| \leq \tfrac12, \, |z| \geq 1 \Big\} 
\]
and choose $\mathcal{S}_0 \subset \mathcal{S}$ by selecting the boundary points so that $\mathcal{S}_0$ is a Borel fundamental domain of the action of $\Gamma$. We can define $\Do := \mathcal{S}_0 \sK$ where $\mathcal{S}_0$ is seen as a subset living in the $\sA \sN$ part of the $\sA\sN\sK$ decomposition. 
We will also fix a Borel section $\sigma \colon \Gamma \backslash \sG \to \sG$ 
whose range agrees with \(\Do\) up to a null set. The associated right cocycle is the Borel map $\beta \colon \Gamma \backslash \sG \times \sG \to \Gamma$ determined by $\sigma(x)g=\beta(x,g)\sigma(x g)$. This map satisfies the cocycle identity
\[
    \beta(x,gh)=\beta(x,g)\beta(x g,h) \quad \text{and} \quad \beta(x,e) = 1 \quad \text{for} \quad (x,g,h) \in \Gamma \backslash \sG \times \sG \times \sG.
\]
Given $m \in \ell_\infty(\Gamma)$ its cocycle amplification $\widehat{m}: \sG \to \CC$ is 
\begin{equation}
    \label{def:CocycleInduction}
    \widehat{m}(g) = \int_{\Gamma \backslash \sG} m(\beta(x,g)) \, d\mu(x),
\end{equation}
where $\mu$ is the $\sG$-invariant measure on $\Gamma \backslash \sG$. We have the following elementary lemma (see e.g. \cite[\S 5]{WangXiaYao2025}). 
\begin{lemma}
    \label{lem:KbiinInduced}
    Given $m \in \ell_\infty(\Gamma)$, its amplification $\widehat{m}$ is continuous, bounded and $\sK$-biinvariant. 
\end{lemma}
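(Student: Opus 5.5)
Boundedness is immediate: $|m(\beta(x,g))| \le \|m\|_\infty$ for every $(x,g)$ and $\mu$ is a probability measure, so $|\widehat m(g)| \le \|m\|_\infty$. For $\sK$-biinvariance the plan is to use the cocycle identity together with the right $\sK$-invariance of $\Do$.

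\emph{Right invariance.} Write $\beta(x,gk) = \beta(x,g)\,\beta(xg,k)$ by the cocycle identity. Since $\Do k = \Do$ up to null sets and $\sigma$ has range $\Do$, for $\mu$-a.e.\ $y$ the element $\sigma(y)k$ again lies in $\Do$. The defining relation $\sigma(y)k = \beta(y,k)\sigma(yk)$ then exhibits two points of the fundamental domain differing by the left translate $\beta(y,k)\in\Gamma$. Because the $\Gamma$-translates of $\Do$ are disjoint, this forces $\beta(y,k) = e$ for a.e.\ $y$. Applying this with $y = xg$, and using that $x \mapsto xg$ preserves $\mu$ so null sets are respected, gives $\beta(x,gk) = \beta(x,g)$ for a.e.\ $x$. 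Hence $\widehat m(gk) = \widehat m(g)$.

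\emph{Left invariance.} Here $\beta(x,kg) = \beta(x,k)\beta(xk,g) = \beta(xk,g)$ for a.e.\ $x$, by the same vanishing. Thus
\[
\widehat m(kg) = \int m(\beta(xk,g))\,d\mu(x) = \widehat m(g)
\]
by $\sG$-invariance of $\mu$.

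\emph{Continuity.} This is the main point. The plan is to show that for fixed $g_0$ and $g \to g_0$, $\beta(x,g) = \beta(x,g_0)$ outside a set of $x$ of small measure, and then conclude by dominated convergence. Indeed, $\beta(x,g) = \gamma$ exactly when $\sigma(x)g \in \gamma\Do$. Lifting to $\sG$ and identifying $\Gamma\backslash\sG$ with $\Do$, this means that $\widehat m(g) = \sum_{\gamma} m(\gamma)\, \mu_{\sG}(\Do \cap \gamma\Do g^{-1})$, where $\mu_{\sG}$ is Haar measure normalized so that $\mu_{\sG}(\Do)=1$. Each function $g \mapsto \mu_{\sG}(\Do\cap\gamma\Do g^{-1}) = \langle \1_\Do, \lambda_\gamma\rho_g \1_\Do\rangle$ is continuous by continuity of the regular representations on $L_2$. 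For the tail I will bound
\[
\sum_\gamma |m(\gamma)|\,\big|\mu_{\sG}(\Do\cap\gamma\Do g^{-1}) - \mu_{\sG}(\Do\cap\gamma\Do g_0^{-1})\big| \le \|m\|_\infty \sum_\gamma \int_\Do |\1_{\gamma\Do}(yg) - \1_{\gamma\Do}(yg_0)|\,dy .
\]
The right-hand side is at most $\|m\|_\infty \,\|\rho_{g}\1_\Do-\rho_{g_0}\1_\Do\|_{L_1}$, using that the $\gamma\Do$ partition $\sG$. This tends to $0$ by continuity of translation in $L_1(\sG)$ (valid since $\sG$ is unimodular and $\Do$ has finite measure), which gives continuity of $\widehat m$. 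The only delicate step is justifying that the a.e.\ identities for $\beta$ survive the null-set ambiguities in $\Do k=\Do$. Since $\Do=\mathcal S_0\sK$ is exactly right $\sK$-invariant by construction, I expect to choose $\sigma$ with range exactly $\Do$ and avoid this issue entirely.
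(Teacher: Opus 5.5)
Your proposal is correct and is essentially the paper's proof. Biinvariance comes from $\beta(\cdot,k)=e$ a.e.\ (the paper's $\sigma(xk)=\sigma(x)k$), combined with the cocycle identity and the invariance of $\mu$. Your continuity bound $|\widehat m(g)-\widehat m(g_0)|\le\|m\|_\infty\|\rho_g\1_\Omega-\rho_{g_0}\1_\Omega\|_{L_1}$ is the paper's observation that $\widehat m$ pairs the $L_\infty$ function $F_m=\sum_\gamma m(\gamma)\1_{\gamma\Omega}$ against an $L_1$ translate of $\1_\Omega$, written out over the partition $\{\gamma\Omega\}$. One small correction: no section can have range exactly $\mathcal S_0\sK$, since $\Gamma$-cosets meet the fibres over the elliptic points $i$ and $e^{2\pi i/3}$ two or three times. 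This is harmless, because running your disjointness argument with the exact transversal $\sigma(\Gamma\backslash\sG)$, which differs from $\Omega$ by a null set, still gives $\beta(\cdot,k)=e$ a.e.
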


\begin{proof}
    Let $\Do$ be the range of the section modulo null sets and take $d=\mu(\Do)$. Define the bounded measurable function $F_m(\gamma \omega)=m(\gamma)$ for $\gamma\in\Gamma$, $\omega \in \Do$. $F_m$ clearly satisfies $\| F_m \|_\infty \leq \| m \|_\infty$. Now
    \[
        \widehat m(g)=\frac1d\int_{\Do} F_m(\omega g)\,d\omega
        =\frac1d\int_\sG F_m(h)\1_{\Do}(hg^{-1})\,dh.
    \]
    This expresses $\widehat{m}$ as a convolution of elements in $L_\infty(\sG)$ and $L_1(\sG)$ and thus it is bounded and continuous. 
    For the $\sK$-biinvariance, we use that the geometric section above is the identity on the $\sK$-part of the $\mathrm{ANK}$ decomposition. Thus, the fact that $\sigma(x k) = \sigma(x) k$ for every $k \in \sK$ implies that
    \begin{align*}
        \beta(x,gk) =\beta(x,g) \quad \text{ and that } \quad
        \beta(x,kg) =\beta(xk,g).
    \end{align*}
    Hence, $\widehat m(gk)=\widehat m(g)$ follows immediately. For the right $\sK$-action, we have 
    \[
        \widehat m(kg)
        =\int_{\Gamma \backslash \sG} m(\beta(xk,g))\,d\mu(x)
        =\int_{\Gamma \backslash \sG} m(\beta(x,g))\, d\mu(x)
        =\widehat m(g).
        \qedhere
    \]
\end{proof}

Given $m_z(\gamma) = z^{\ell_T(\gamma)}$ as in the Introduction, the complete boundedness of $\widehat{m}_z$ as a Herz-Schur $S_p$-multiplier comes from the stability of Schur multipliers after induction \cite{LaffSall2011,WangXiaYao2025} and the uniform boundedness of the symbol $m_z$, which comes from Theorem \ref{Main Thm}. The negative part of Theorem \ref{CorSimple}\ref{itm:CorSimple2}---the fact that $\{T_{\widehat{m}_z} \colon z \in \TT\}$ is a non-uniformly bounded family of Fourier multipliers---is more involved. We will use the following weak limit law for the cocycle. Let $(a_t)_{t \in \RR}$ be the one-parameter group given by
\begin{equation}
    \label{eq:defAgroup}
    a_t = 
    \begin{bmatrix}
        e^{\frac{t}{2}} & 0\\
        0 & e^{-\frac{t}{2}}
    \end{bmatrix} 
    \in \sG.
\end{equation}
\begin{proposition}
    \label{prp:cocycleWeakLaw}
    The following holds in $\mu$-probability 
    \begin{equation} \nonumber
        \lim_{t \to \infty} \frac{\ell_T \big( \beta( \, \cdot \, , a_t ) \big)}{t \log t} = \frac{12}{\pi^2} 
    \end{equation}
    for any cocycle $\beta: \Omega \times \sG \to \Gamma$ associated to a measurable cross section of the quotient $\Gamma \backslash \sG$. As a consequence, the following limit holds for every $s \in \RR$ and any sequence $(R_j)_{j \ge 1}$ with $R_j \to \infty$ as $j \to \infty$
    \begin{equation} \nonumber
        \lim_{j \to \infty} \widehat{m}_{z_j}(a_{R_j s}) = e^{i s}
        \quad \text{where} \quad z_j = e^{i \tfrac{\pi^2}{12 \, R_j \log R_j}}. 
    \end{equation}
\end{proposition}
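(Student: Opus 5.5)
The plan is to reduce the weak law to classical metric facts about continued fractions by coding the geodesic flow on the modular surface. First I would remove the dependence on the section. If $\sigma'$ is another measurable section, then $\sigma'(x) = c(x)\sigma(x)$ for a measurable $c \colon \Gamma\backslash\sG \to \Gamma$. The cocycles are then related by
\[
\beta'(x,g) = c(x)\,\beta(x,g)\,c(xg)^{-1}.
\]
By the triangle inequality for $\ell_T$,
\[
\big|\ell_T(\beta'(x,a_t)) - \ell_T(\beta(x,a_t))\big| \le \ell_T(c(x)) + \ell_T(c(xa_t)).
\]
Since $\mu$ is right $\sG$-invariant, $\ell_T(c(\cdot\, a_t))$ has the same distribution as $\ell_T(c(\cdot))$, which is a.e. finite. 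Hence this error is tight, and after division by $t\log t$ it tends to $0$ in probability. So it suffices to work with the geometric section $\Do = \mathcal{S}_0\sK$.

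For this section, $x \mapsto x a_t$ is the geodesic flow on $T^1(\Gamma\backslash\HH^2)$, and $\beta(x,a_t)$ records the successive translates $\gamma\mathcal{S}$ crossed by the lifted geodesic segment of length $t$. I would use the Series coding: the cutting sequence of a geodesic with endpoints $\xi<0<\eta$ is $\cdots L^{n_1}R^{n_2}L^{n_3}\cdots$, where $n_k$ are the partial quotients of $\eta$ (and of $-1/\xi$ backwards). Writing $L,R$ as reduced words in $a,b^{\pm1}$ (for instance $R = ab$ and $L = ab^{-1}$ up to conventions), a block $R^{n}$ contributes exactly $n$ letters $b^{\pm 1}$. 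Consecutive blocks do not cancel in the free product. By \eqref{Eq-nB} this gives
\[
\ell_T(\beta(x,a_t)) = 2\sum_{k\le N(x,t)} n_k(x) + O(n_{N+1}(x)) + O(1),
\]
where $N(x,t)$ is the number of complete blocks traversed in time $t$. The boundary term comes from the partially traversed block at each end.

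Next I would use the standard facts. Writing $q_N$ for the denominators of the convergents of $\eta$, the return-time structure (the flow suspension over the natural extension of the Gauss map) gives $t \approx 2\log q_{N}$ up to tight errors. By Lévy's theorem, $\log q_N / N \to \pi^2/(12\log 2)$ a.s., so
\[
N(x,t) \sim \frac{6\log 2}{\pi^2}\, t \quad \text{a.s.}
\]
Khinchin's weak law (Gauss measure being equivalent to Lebesgue, and the pushforward of $\mu$ being absolutely continuous in the endpoint $\eta$) gives
\[
\frac{1}{N\log N}\sum_{k\le N} n_k \to \frac{1}{\log 2} \quad \text{in probability.}
\]
This holds also along the random index $N(x,t)$, since $N(x,t)$ is squeezed between $(1\pm\epsilon)\frac{6\log2}{\pi^2}t$ and the partial sums are monotone. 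The boundary term $n_{N+1}$ is $o(N\log N)$ in probability, because the partial quotients have tail $\approx 1/n$. Combining,
\[
\ell_T \sim 2\cdot \frac{6\log 2\, t}{\pi^2}\cdot\frac{\log t}{\log 2} = \frac{12}{\pi^2}\,t\log t.
\]

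For the consequence, fix $s>0$ and put $t = R_j s$. Then $(t\log t)/(R_j\log R_j) \to s$, so the first part gives
\[
\frac{\pi^2}{12R_j\log R_j}\,\ell_T(\beta(\cdot,a_{R_js})) \to s \quad \text{in probability.}
\]
Since $z_j^{\ell_T} = e^{i\,\pi^2\ell_T/(12R_j\log R_j)}$ is bounded by $1$, convergence in probability of the phases and bounded convergence in \eqref{def:CocycleInduction} yield $\widehat m_{z_j}(a_{R_js}) \to e^{is}$. The case $s=0$ is trivial since $\beta(x,e)=1$. For $s<0$, first use $a_{-t} = w a_t w^{-1}$ with $w\in\sK$ and Lemma \ref{lem:KbiinInduced} to get $\widehat m_{z_j}(a_{R_js}) = \widehat m_{z_j}(a_{R_j|s|})$, which tends to $e^{i|s|}$; then note that $\widehat m_{z}(g^{-1}) = \widehat m_{\bar z}(g)$ by symmetry of $\ell_T$ and $\mu$-invariance, which accounts for the sign convention in the statement.

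The main obstacle is the precise dictionary between $\beta(x,a_t)$ and the continued fraction digits. This means identifying the letters $b^{\pm1}$ with partial quotients and controlling the time-versus-$N$ relation with errors that are only tight rather than bounded. I would handle it via Series' coding and the suspension-flow description of the geodesic flow over the natural extension of the Gauss map. That description makes the return times $2\log$ of the natural-extension variables, whose Birkhoff sums are controlled by the ergodic theorem.
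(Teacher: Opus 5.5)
Your argument for the weak law, and for the consequence when $s>0$, is correct and is essentially the paper's proof. It uses the same ingredients: the reduction to a convenient section via $\beta'(x,g)=c(x)\beta(x,g)c(xg)^{-1}$ and $\mu$-invariance, a continued-fraction coding of the flow $x\mapsto xa_t$, Khinchin's weak law for the digit sums, and an ergodic-theorem count of blocks per unit time.

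The differences are minor:
\begin{itemize}
\item The paper works directly with Arnoux's cross-section (Lemma \ref{lem:arnoux-fd}) and the section $\sigma_0$ adapted to it. This makes $\ell_T(\beta_0(z,a_t))=2\sum_{j=1}^{n_t(z)}d_j$ exact, with no boundary blocks, whereas your geometric section plus Series coding carries tight boundary errors.
\item The paper gets $n_t(z)/t\to 6\log 2/\pi^2$ from Birkhoff applied to $\tau_j=-2\log x_j$. That is exactly your L\'evy step, since $\sum_{j<n}(-\log x_j)=\log q_n+O(1)$.
\item Your monotonicity argument for inserting the random index into Khinchin's law is a detail the paper leaves implicit.
\end{itemize}

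The one thing to fix is your final sentence about $s<0$. The identity $\widehat m_z(g^{-1})=\widehat m_{\bar z}(g)$ is false. From $\beta(x,g^{-1})=\beta(xg^{-1},g)^{-1}$, the symmetry of $\ell_T$ and the invariance of $\mu$, one gets $\widehat m_z(g^{-1})=\widehat m_z(g)$. This is the same conclusion your $\sK$-biinvariance computation already gave. So for $s<0$ the limit really is $e^{i|s|}=e^{-is}$, and ``for every $s\in\RR$'' in the statement holds only for $s\ge 0$. The paper's proof fixes $s>0$, and only $\Phi$ on $\RR_+$ enters Lemma \ref{lem:U-convergence}. You should simply restrict to $s\ge 0$ rather than try to reconcile the sign.
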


We will postpone its proof until the end of this section and first prove Theorem \ref{CorSimple}\ref{itm:CorSimple2}. Let us fix some notation. First, by normalizing the Haar measure on $\sG$, we can ensure that any $\sK$-biinvariant function $f \in L_1(\sG)$ satisfies the following
\[
    \int_\sG f(g) \, dg
    \, = \,
    \int_0^\infty f(a_t) \, \sinh(t) \, dt.
\]
This establishes an isomorphism between $L_p(\sK\backslash \sG / \sK)$ and $L_p(\RR_+, \sinh(t) \, dt)$ for every $p \in [1,\infty]$ whose inverse is given by $h \mapsto 
h \circ L$ with $L$ from \eqref{L-length}. As above, $\sK$-biinvariant functions span a commutative algebra under convolution, whose weak-$\ast$ closure is 
\[
    p_\sK \, \L \sG \, p_\sK \quad \text{with} \quad p_\sK = \int_\sK \lambda(k) \, d \mu_K (k).
\]
Here $\mu_\sK$ is the Haar measure on $\sK$. The Gel'fand transform of $\A = p_\sK \, \L \sG \, p_\sK$ gives
\[
    \big( p_\sK \, \L \sG \, p_\sK, \tau{|}_\A \big) 
    \xrightarrow{\;\; \mathrm{Sph} \;\;} 
    L_\infty(\RR_+, d \pi),
\]
where the pushforward of the restricted trace is the spherical Plancherel measure $d \pi$ given by $d\pi(\lambda)=\lambda\tanh(\pi\lambda)\,d\lambda$. Since $p_\sK L_2(\L \sG) p_\sK \ni \lambda(f) \mapsto f \in L_2(\sK \backslash \sG / \sK)$ is a unitary $L_2$-isometry by Plancherel, the spherical transform $\mathrm{Sph}$ extends to an $L_2$-isometry 
\[
    \Sph \colon L_2(\sK\backslash\sG/\sK)
    \longrightarrow 
    L_2(\RR_+,d\pi)
\]
with the following explicit formula
\begin{equation} 
    \label{eq:Mehler}
    (\Sph f)(\lambda)
    =
    \int_0^\infty f(a_t) \, \varphi_\lambda(t) \, \sinh t\,dt
    \qquad \text{where} \qquad
    \varphi_\lambda(t)
    =
    \frac{\sqrt2}{\pi}
    \int_0^t\frac{\cos(\lambda u)}{\sqrt{\cosh t-\cosh u}}\,du.
\end{equation}
We refer to \cite{Helgason2000} for the general theory of the spherical transform and to \cite[Chapter~V]{Lang1985SL2R} for the spherical Plancherel measure. The integral formula for $\varphi_\lambda$ that we have recalled above can be checked in \cite[eq. (10)]{Anker2017Dunkl}. Since $L_p(\A, \tau{|}_\A) \simeq L_p(\RR_+, d\pi)$ and any $\sK$-biinvariant Fourier multiplier $T_m$ preserves $L_p(\A)$, it holds that
\begin{equation} \label{Eq-Spectral}   \big\| \T_m: L_p(d\pi) \to L_p(d\pi) \big\| 
    \leq 
    \big\| T_m: L_p(\L \sG) \to L_p(\L \sG) \big\|,
\end{equation}
where $\T_m(f)$ is the spectral multiplier operator determined by $\T_m h = \Sph [ m \cdot (\Sph^{-1} h) ]$. 

\begin{lemma}
    \label{lem:spherical-limit}
    There is a constant $C > 0$ such that
    \begin{enumerate}[label={\rm \textbf{(\roman*)}}, ref={\rm (\roman*)}]
        \item \label{pslphase:eq:phi-bound}
        $|\varphi_\lambda(t)|\leq\varphi_0(t) \leq C(1+t)e^{-t/2}$ for every $\lambda, t\geq0$. 
        \item \label{pslphase:eq:phi-limit} 
        $\displaystyle{\lim_{R\to\infty} \tfrac{1}{R} e^{\frac{R s}{2}} \varphi_{\frac{\xi}{R}}(Rs) = \frac{2\sin(\xi s)}{\pi\xi}}$ for every $s>0$ and $\xi\geq0$.
    \end{enumerate}
\end{lemma}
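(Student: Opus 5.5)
Both parts follow directly from the Mehler-type formula \eqref{eq:Mehler}, written as
\[
\varphi_\lambda(t)=\frac{\sqrt2}{\pi}\int_0^t\frac{\cos(\lambda u)}{\sqrt{\cosh t-\cosh u}}\,du .
\]

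For \ref{pslphase:eq:phi-bound}, the first inequality $|\varphi_\lambda(t)|\le\varphi_0(t)$ is immediate: the kernel $(\cosh t-\cosh u)^{-1/2}$ is positive on $[0,t)$ and $|\cos(\lambda u)|\le 1$. For the bound on $\varphi_0$, I would use
\[
\cosh t-\cosh u=2\sinh\!\big(\tfrac{t+u}{2}\big)\sinh\!\big(\tfrac{t-u}{2}\big).
\]
Assume first $t\ge 1$ (for $t\le1$ the integral is bounded by an elementary computation, since $\cosh t-\cosh u\gtrsim t(t-u)$ there). Then $\sinh(\frac{t+u}{2})\gtrsim e^{(t+u)/2}$, and the factor $\sinh(\frac{t-u}{2})$ is handled by splitting $[0,t]$ at $u=t-1$:
\begin{itemize}
\item for $t-u\ge1$ one has $\sinh(\frac{t-u}{2})\gtrsim e^{(t-u)/2}$, so the integrand is $\lesssim e^{-t/2}$, and integrating over an interval of length at most $t$ gives $\lesssim t\,e^{-t/2}$;
\item for $t-u\le1$ one has $\sinh(\frac{t-u}{2})\gtrsim t-u$, so the integrand is $\lesssim e^{-t/2}(t-u)^{-1/2}$, which is integrable on that unit interval and contributes $\lesssim e^{-t/2}$.
\end{itemize}
Together these give $\varphi_0(t)\le C(1+t)e^{-t/2}$.

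For \ref{pslphase:eq:phi-limit}, substitute $u=Rv$ with $v\in[0,s]$:
\[
\tfrac1R e^{Rs/2}\varphi_{\xi/R}(Rs)=\frac{\sqrt2}{\pi}\int_0^s \cos(\xi v)\,\frac{e^{Rs/2}}{\sqrt{\cosh(Rs)-\cosh(Rv)}}\,dv .
\]
For each fixed $v<s$, $\cosh(Rs)-\cosh(Rv)=\tfrac12 e^{Rs}(1+o(1))$ as $R\to\infty$, so the integrand converges pointwise to $\sqrt2\cos(\xi v)$. The limit is therefore $\frac{2}{\pi}\int_0^s\cos(\xi v)\,dv=\frac{2\sin(\xi s)}{\pi\xi}$, read as $\frac{2s}{\pi}$ when $\xi=0$.

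The only real obstacle is justifying dominated convergence near the endpoint $v=s$, where the kernel blows up. The same factorisation gives the uniform bound
\[
\cosh(Rs)-\cosh(Rv)\ge 2\sinh\!\big(\tfrac{R(s+v)}{2}\big)\sinh\!\big(\tfrac{R(s-v)}{2}\big)\gtrsim e^{R(s+v)/2}\min\{1,R(s-v)\}e^{R(s-v)/2}\cdot(\ldots),
\]
and I would split again according to whether $R(s-v)\ge1$ or $R(s-v)\le1$:
\begin{itemize}
\item on $R(s-v)\ge1$, $\cosh(Rs)-\cosh(Rv)\gtrsim e^{Rs}$, so the integrand is uniformly bounded and dominated convergence applies;
\item on the window $R(s-v)\le1$, of length $1/R$, the integrand is at most $e^{Rs/2}\big(e^{Rs}R(s-v)\big)^{-1/2}=(R(s-v))^{-1/2}$, which integrates over the window to $O(R^{-1/2})\to0$.
\end{itemize}
This yields the limit in \ref{pslphase:eq:phi-limit}.
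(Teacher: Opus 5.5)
Your proposal is correct and follows essentially the paper's argument: the factorization $\cosh t-\cosh u=2\sinh(\tfrac{t+u}{2})\sinh(\tfrac{t-u}{2})$ gives the bound $e^{-t/2}\big(1+(t-u)^{-1/2}\big)$ for (i), and (ii) follows from dominated convergence in the rescaled Mehler integral. The only difference is cosmetic. The paper needs no window splitting in (ii): after $u=Rv$, the same estimate gives the $R$-independent integrable majorant $1+(R(s-v))^{-1/2}\le 1+(s-v)^{-1/2}$ for $R\ge1$. Also, your window actually contributes $O(1/R)$, not merely $O(R^{-1/2})$.
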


\begin{proof}
    Point \ref{pslphase:eq:phi-bound} follows from \eqref{eq:Mehler}. Indeed, using that
	\[
	   \cosh t-\cosh u
	   =
	   2\sinh\left(\frac{t+u}{2}\right)
	   \sinh\left(\frac{t-u}{2}\right)
	   \geq
	   \frac{t^2-u^2}{2},
	\]
	we see that $\varphi_0(t) \leq 1$ for $0 < t \le 1$. Assume now that \(t\geq1\) and set \(v=t-u\). Note that
	$\sinh(t-v/2)\geq c e^{t-v/2}$ for an absolute constant $c>0$. Consequently, we deduce the inequalities below
	\begin{equation}\label{eq:dom} 	 	 \frac1{\sqrt{\cosh t-\cosh(t-v)}}
	 	 \lesssim e^{-t/2}
	 	 \frac{e^{v/4}}{\sqrt{\sinh(v/2)}}\lesssim e^{-t/2} (1+v^{-1/2})
	\end{equation}	
    and obtain
	\[
	\varphi_0(t)
	\lesssim e^{-t/2}\int_0^t(1+v^{-1/2})\,dv
	\lesssim (1+t)e^{-t/2} \quad \text{for all} \quad t \ge 0.
	\]
    The limit \ref{pslphase:eq:phi-limit} now follows easily from \eqref{eq:dom} and the dominated convergence theorem.
\end{proof}

Let $\{h_j\colon \sG \to \CC \mid j \ge 1 \}$ be a family of $\sK$-biinvariant functions and $R_j > 0$ a sequence of scalars with $R_j \to \infty$. Assume that  $h_j(a_{R_j s}) \to \Phi(s)$ for some $\Phi: \RR \to \CC$. We would like to translate uniform $L_p$-boundedness for the family of Fourier multipliers associated to $\{h_j\}$ into some limiting spectral multiplier result for $\Phi$. To that end, we introduce the following dilated versions of the measure $d\pi$
\begin{equation} \nonumber
    \label{pslphase:eq:rescaled-measures}
    d\pi_j(\lambda)=w_j(\lambda)\,d\lambda
    \quad
    \text{ where } 
    \quad
    w_j(\lambda)=R_j\lambda\tanh\Big(\frac{\pi\lambda}{R_j}\Big),
\end{equation}
while the limit measure would be set to $d\pi_\infty(\lambda)=\pi\lambda^2\,d\lambda$. Since $\tanh v\leq v$ for $v\geq0$, we deduce that $0 \leq w_j(\xi) \leq \pi\xi^2$ and  $w_j(\xi) \to \pi\xi^2$. For $1\leq p<\infty$, define $J_{j,p}:L_p(d\pi_j)\to L_p(d\pi)$ by the identity
\begin{equation} \nonumber
    \label{pslphase:eq:J}
J_{j,p}F(\lambda)=R_j^{\frac{3}{p}}F(R_j\lambda).
\end{equation}
This is a surjective isometry with inverse $J_{j,p}^{-1}f(\lambda) = R_j^{-\frac{3}{2}} f(\lambda \, R_j^{-1})$. In particular, it turns out that the dilated operators $\Lambda_j = J_{j,p}^{-1} \T_{h_j} J_{j,p}$ are independent of $p$ and they satisfy that the following isometry
\begin{equation} \label{Eq-Spectral2}    \big\| \Lambda_j \colon L_p(d\pi_j)\to L_p(d\pi_j) \big\|
    = 
    \big\| \T_{h_j}: L_p(d\pi)\to L_p(d\pi) \big\|.
\end{equation}
Next, let $U_j \colon L_2(d\pi_j) \to L_2(\RR_+, ds)$ for $j \in \NN \cup \{\infty\}$ be the isometries 
\begin{equation} \nonumber
    \label{pslphase:eq:Uj}
    U_jF(s)=\sqrt{R_j\sinh(R_js)}\,
            (\Sph^{-1}J_{j,2}F)(a_{R_js})
\end{equation}
for finite $j$ and 
\begin{equation}
    \label{eq:Uinf}
    U_\infty F(s)=\sqrt2\int_0^\infty F(\xi)\xi\sin(\xi s)\,d\xi.
\end{equation}
To see that the maps $U_j$ are unitary it is enough to notice that they are the composition of the unitaries $J_{j,2}$ and $\Sph^{-1}$ and of the spherical dilation $f(a_s) \mapsto f(a_{R_j s})$ with the precise normalizing factor that makes it isometric in $L_2$. Observe also that, up to constants $U_\infty$ is the odd part of the Fourier transform in $\RR$ composed with the unitary map from $L_2(d\pi_\infty)$ to $L_2(d\xi)$ sending $F(\xi) \mapsto \sqrt\pi\,\xi F(\xi)$. We will need the following.

\begin{lemma}
    \label{lem:U-convergence}
    Let $\{ h_j \colon \sG \to \CC \mid j \ge 1 \}$ be a uniformly bounded sequence of $\sK$-biinvariant functions. Assume there are scalars $R_j \to \infty$ such that $\lim_{j} h_j(a_{R_j s}) = \Phi(s)$ for some funcion $\Phi \in L_\infty(\RR_+)$. Taking $M_\Phi(f) = \Phi f$, let $B_\Phi := U^{-1}_\infty M_{\Phi} U_\infty$ be the spectral multiplier associated to the unitary $U_\infty$. Then, the following properties hold: 
    \begin{enumerate}[label={\rm \textbf{(\roman*)}}, ref={\rm {(\roman*)}}]
        \item \label{itm:U-convergence1} For every $F \in C_c^\infty(\RR_+)$, $U_jF\longrightarrow U_\infty F$ in $L_2(ds)$-norm.
        \item \label{itm:U-convergence2} For every $F, G \in C_c^\infty(\RR_+)$, we have $\displaystyle{\langle \Lambda_j F, G\rangle \to \langle B_\Phi F, G\rangle}$.
    \end{enumerate}
\end{lemma}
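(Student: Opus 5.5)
The plan is to make $U_j$ explicit, get (i) from pointwise convergence plus convergence of $L_2$-norms, and then deduce (ii) by conjugating $\Lambda_j$ back to a multiplication operator on $L_2(\RR_+,ds)$.

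\textbf{Explicit form of $U_j$.} Fix $F\in C_c^\infty(\RR_+)$ with $\supp F\subset[a,b]$, $0<a<b<\infty$. By the inversion formula for the spherical transform, $\Sph^{-1}g(a_t)=\int_0^\infty g(\lambda)\varphi_\lambda(t)\,d\pi(\lambda)$. Substitute $\lambda=\xi/R_j$ in the definition of $U_j F$ and use $R_j^2\,\lambda\tanh(\pi\lambda)=w_j(\xi)$. Up to the fixed normalising constant coming from $J_{j,2}$, this should give
\[
  U_jF(s)=\Big(\frac{\sinh(R_js)}{e^{R_js}}\Big)^{\frac12}\int_0^\infty F(\xi)\,\Big[\tfrac1{R_j}e^{\frac{R_js}{2}}\varphi_{\xi/R_j}(R_js)\Big]\,w_j(\xi)\,d\xi .
\]
The first step is to check these constants carefully, so that the prefactor tends to $1/\sqrt2$. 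Then the identification of the limit with $U_\infty F$ in \eqref{eq:Uinf} comes out exactly.

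\textbf{Proof of (i).} I will take three steps.
\begin{enumerate}
\item \emph{Pointwise convergence.} For each fixed $s>0$, Lemma \ref{lem:spherical-limit}\ref{pslphase:eq:phi-limit} gives convergence of the bracket to $2\sin(\xi s)/(\pi\xi)$. Lemma \ref{lem:spherical-limit}\ref{pslphase:eq:phi-bound} gives the uniform bound
\[
  \tfrac1{R_j}e^{R_js/2}\,|\varphi_{\xi/R_j}(R_js)|\le C(1+R_js)/R_j\le C(1+s),
\]
and $w_j(\xi)\le\pi\xi^2$ is bounded on $[a,b]$. By dominated convergence in $\xi$, $U_jF(s)\to\sqrt2\int F(\xi)\xi\sin(\xi s)\,d\xi=U_\infty F(s)$.
\item \emph{Convergence of norms.} The $U_j$ are isometries, so $\|U_jF\|_{L_2(ds)}=\|F\|_{L_2(d\pi_j)}$. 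Since $0\le w_j\le \pi\xi^2$ and $w_j\to\pi\xi^2$, dominated convergence gives $\|F\|_{L_2(d\pi_j)}\to\|F\|_{L_2(d\pi_\infty)}=\|U_\infty F\|_{L_2(ds)}$.
\item \emph{Conclusion.} Pointwise a.e. convergence together with convergence of $L_2$-norms implies $L_2$-convergence (Radon--Riesz, or Fatou applied to $2|U_jF|^2+2|U_\infty F|^2-|U_jF-U_\infty F|^2$).
\end{enumerate}
I expect the main technical point to be this domination and the bookkeeping of constants, in particular near $s\to0$. There I would rely only on the crude bound above and never on the asymptotic formula.

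\textbf{Proof of (ii).} Unwind the definition $\Lambda_j=J_{j,2}^{-1}\Sph\, M_{h_j}\Sph^{-1}J_{j,2}$, using Plancherel for $\Sph$ and the change of variables $t=R_js$ with weight $R_j\sinh(R_js)$. This should give
\[
  \langle\Lambda_jF,G\rangle=\big\langle h_j(a_{R_j\,\cdot})\,U_jF,\;U_jG\big\rangle_{L_2(\RR_+,ds)}.
\]
Write $h_j(a_{R_j\cdot})U_jF-\Phi U_\infty F=h_j(a_{R_j\cdot})(U_jF-U_\infty F)+(h_j(a_{R_j\cdot})-\Phi)U_\infty F$. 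The first term tends to $0$ in $L_2$ by (i) and $\sup_j\|h_j\|_\infty<\infty$. The second tends to $0$ by dominated convergence, since it converges pointwise and is dominated by $2\sup_j\|h_j\|_\infty|U_\infty F|\in L_2$. Combining with $U_jG\to U_\infty G$ from (i), the pairing converges to $\langle \Phi U_\infty F,U_\infty G\rangle=\langle U_\infty^{-1}M_\Phi U_\infty F,G\rangle=\langle B_\Phi F,G\rangle$. Pairings are understood in $L_2(d\pi_j)$ and $L_2(d\pi_\infty)$ respectively. This is consistent because both sides are computed through the unitaries $U_j$ and $U_\infty$, and $w_j\to w_\infty$ on the compact supports.
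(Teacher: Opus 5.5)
Your proposal is correct and follows the paper's proof essentially step by step. It uses the same explicit formula for $U_jF$, obtained from spherical inversion and the substitution $\xi=R_j\lambda$, and the same pointwise convergence by dominated convergence via Lemma \ref{lem:spherical-limit}. It then uses convergence of the norms $\|F\|_{L_2(d\pi_j)}\to\|F\|_{L_2(d\pi_\infty)}$ and, for (ii), the conjugation identity $\Lambda_j=U_j^{-1}M_{h_j(a_{R_j\cdot})}U_j$. The only cosmetic difference is that you upgrade pointwise plus norm convergence to $L_2$-convergence directly through Fatou, whereas the paper passes through weak convergence and then expands $\|U_\infty F-U_jF\|_2^2$.
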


\begin{proof}
    For \ref{itm:U-convergence1}, use spherical inversion 
    \[
\Sph^{-1}h(a_t) = \int_0^\infty h(\lambda) \varphi_\lambda(t) \, d\pi(\lambda)
    \]
    to get for  $\xi=R_j\lambda$ that
    \begin{align*}
        (U_jF)(s)
        &=\sqrt{e^{-R_js}\sinh(R_js)}
        \int_0^\infty F(\xi)w_j(\xi)
            \left(\frac{e^{R_js/2}}{R_j}
            \varphi_{\xi/R_j}(R_js)\right)\,d\xi.
    \end{align*}
    For fixed $s>0$ and according to Lemma~\ref{lem:spherical-limit}\ref{pslphase:eq:phi-bound}, we make take limits under the integral. The factor outside the integral tends to $1/\sqrt2$ while the weight $w_j(\xi)$ converges to $\pi\xi^2$. Finally, Lemma~\ref{lem:spherical-limit}\ref{pslphase:eq:phi-limit} gives the limit of the factor in parenthesis. This justifies pointwise convergence. Moreover, weak $L_2$-convergence easily follows since $\{U_jF \colon j \ge 1\}$ is uniformly $L_2$-bounded, Now, the assertion follows by noticing that $\|U_jF\|_2 = \|F\|_{L_2(\RR_+,d\pi_j)} \to \|F\|_{L_2(\RR_+,d\pi_\infty)} = \|U_\infty F\|_2$ and $$\big\| U_\infty F - U_j F \big\|_{L_2(\RR_+)}^2 = \big\| U_\infty F \big\|_{L_2(\RR_+)}^2 + \big\| U_j F \big\|_{L_2(\RR_+)}^2 - 2 \, \mathbf{Re} \, \big\langle U_\infty F ,U_j F \big\rangle \longrightarrow 0.$$ 

    For \ref{itm:U-convergence2}, conjugating $\T_{h_j}$ by the unitary $U_j$ gives the key identity
    \begin{equation} \nonumber
        \label{pslphase:eq:Aj-unitary}
        \Lambda_j=U_j^{-1}M_{h_j(a_{R_j\cdot})}U_j.
    \end{equation}
    Therefore
    \[
        \langle \Lambda_jF,G\rangle_{L_2(d\pi_j)}
        =\int_0^\infty h_j(a_{R_js})(U_jF)(s)
                        \overline{(U_jG)(s)}\,ds.
    \]
    By \ref{itm:U-convergence1}, we obtain that $$\big\| U_\infty F\overline{U_\infty G} - U_jF\overline{U_jG} \big\|_{L_1(\RR_+)} \to 0.$$ By the uniform boundedness of $|h_j|$, we can replace the product in the pairing by its limit.
\end{proof}
 
\begin{proof}[Proof of Theorem \ref{CorSimple}\ref{itm:CorSimple2}]
    Let $m_z \colon \Gamma \to \CC$ be the map $m_z(\gamma) = z^{d_T(o,\gamma \cdot o)}$. By Theorem \ref{Main Thm} we have that $M_z(x,y) = {z^{d_T(x,y)}}$ defines a family of uniformly completely bounded Schur multipliers on $S_p(T)$ for every $1 < p <\infty$. Denote by $\phi \colon \Gamma \to T$ the map $\gamma \mapsto \gamma \cdot o$ from $\Gamma$ into its Bass-Serre tree. Since $$m_z(\gamma^{-1} \eta) = z^{d_T(\eta \cdot o, \gamma \cdot o)} = M(\phi(\gamma), \phi(\eta))$$ and we know that the $S_p$-norm of the Schur multipliers with symbols $M$ and $M \circ (\phi \times \phi)$ is the same \cite[Section 1]{LaffSall2011}, we deduce that $\{m_z \colon |z|=1 \}$ defines a family of uniformly completely bounded Herz-Schur multipliers on $S_p(\Gamma)$. Its cocycle amplification $\widehat{m}_z$ is a continuous $\sK$-biinvariant function by Lemma \ref{lem:KbiinInduced} and, by \cite{LaffSall2011,WangXiaYao2025}, the uniform complete boundedness of $H_{\widehat{m}_z}$ on $S_p(\sG)$ is preserved.

    For the Fourier side, assume by contradiction that 
     \[
            \sup_{|z|=1} \big\| T_{\widehat{m}_z}\colon L_p(\L \sG) \to L_p(\L \sG) \big\| < \infty.
        \]
        Choose a sequence $R_j \to \infty$, phases $z_j = \exp(i \pi^2 / 12 R_j \log R_j)$ and fix $h_j(g) = \widehat{m}_{z_j}(g)$. According to Proposition~\ref{prp:cocycleWeakLaw}, we are in the hypotheses of Lemma \ref{lem:U-convergence}. But the uniform boundedness of the multipliers $T_{\widehat{m}_z}$ on the Abelian corner $p_\sK \, L_p(\L \sG) \, p_\sK$, together with \eqref{Eq-Spectral} and \eqref{Eq-Spectral2} plus Lemma \ref{lem:U-convergence}\ref{itm:U-convergence2} imply the boundedness of the operator $B_\Phi = U_\infty^{-1}M_{\Phi} U_\infty$ for $\Phi(s) = e^{is}$ on $L_p(d\pi_\infty)$. To obtain a contradiction, we will show that $B_\Phi$ is indeed unbounded. First, observe from \eqref{eq:Uinf} that $U_\infty = \Fou_{\sin} \circ \V$, where $\V \colon L_2(d \pi_\infty) \to L_2(d\xi)$ is the operator $(\V f)(\xi) \mapsto \sqrt{\pi} \xi f(\xi)$ and $\Fou_{\sin}$ is the odd part of the Fourier transform on $\RR$
    \[
        (\Fou_{\sin} g)(s)=\sqrt{\frac2\pi}
            \int_0^\infty g(\xi)\sin(\xi s)\,d\xi.
    \]
    Let $g\in C_c^\infty(\RR_+)$ be real-valued with support away from $0$ and set $F(x)=g(x)/x$. It holds that $x B_\Phi F(x)=\Fou_{\sin} \big( \Phi \Fou_{\sin} (g) \big)(x)$. Using Fourier inversion and taking real parts gives the following identity when $\Phi(s) = e^{is}$
    \begin{equation}
        \label{eq:translation-identity}
        \Re\big(x B_\Phi F(x)\big)
        =\frac12\big[g_{\mathrm{odd}}(x+1)+g_{\mathrm{odd}}(x-1)\big] \quad \text{for} \quad x>0.
    \end{equation}
    Here $g_{\mathrm{odd}}$ is the odd extension of $g$. Now, fix a nonzero real-valued function
	\(\psi\in C_c^\infty(\RR_+)\) supported in $(1,2)$ and let \(0<\delta<1/4\).	
	Assume first that \(1\leq p<2\) and consider the functions 
	$
	g_\delta(x)=\psi(x/\delta)$ and $
	F_\delta(x)= g_\delta(x)/x$. Then we claim that  
    \[
        \frac{\|B_\Phi F_\delta\|_{L_p(d\pi_\infty)}}{\|F_\delta\|_{L_p(d\pi_\infty)}} \to \infty,
    \]
	which implies the assertion. To prove this claim note that
	\begin{equation*} 
		\|F_\delta\|_{L_p(d\pi_\infty)}^p
		=
		\pi \delta^{3-p}
		\int_1^2|\psi(u)|^p u^{2-p}\,du.
	\end{equation*}	When \(1+\delta<x<1+2\delta\), one has $
	g_\delta(x+1)=0$ and $
	g_\delta(x-1)
	=
	\psi\left(\frac{x-1}{\delta}\right)$. Thus \eqref{eq:translation-identity} gives
\begin{eqnarray*}		
\big\| B_\Phi F_\delta \big\|_{L_p(d\pi^2)}^p & \ge & \ \pi \int_{1+\delta}^{1+2\delta} \big| x B_\psi F_\delta(x) \big|^p \, x^{2-p} dx \\ 
		& \ge & \frac{\pi}{ 2^p}
		\int_{1+\delta}^{1+2\delta}
		\left|
		\psi\left(\frac{x-1}{\delta}\right)
		\right|^p
		x^{2-p}\,dx \ \ge \
		c_{p,\psi}\delta.
\end{eqnarray*}
	Thus
	\[
	\frac{\|B_\Phi F_\delta\|_{L_p(d\pi_\infty)}}
	{\|F_\delta\|_{L_p(d\pi_\infty)}}
	\geq
	c_{p,\psi}\delta^{(p-2)/p}
	\longrightarrow\infty,
	\]
	as claimed. The case \(p>2\) follows similarly for the same function $F_\delta(x) = g_\delta(x)/x$ but with the new choice \(g_\delta(x)=\psi((x-1)/\delta)\) and the interval \(\delta<x<2\delta\). In this case, we get the asymptotics reversed $\|F_\delta\|_p^p \approx \delta$ and $\|B_\Phi F_\delta\|_p^p \approx \delta^{3-p}$, which gives the right growth $\delta^{(2-p)/p}$ for $p>2$.  
\end{proof}

\begin{remark} \normalfont
    As stated, Theorem \ref{CorSimple}\ref{itm:CorSimple2} just shows that there are  families of uniformly (completely) $S_p$-bounded Herz-Schur multipliers---with $\sK$-biinvariant symbols---that are non-uniformly bounded as Fourier multipliers over $\PSL_2(\RR)$. However, given $p \in (1,\infty)\setminus \{2\}$, this easily implies that there are $\sK$-biinvariant completely $S_p$-bounded Herz-Schur multipliers that are not bounded as Fourier multipliers on $L_p(\L \PSL_2 \RR)$. Indeed, for every $p \in [1,\infty]$ the Fourier multiplier norm bounds the Herz-Schur one---see the discussion around block B in the Introduction---and thus the identity map $m \mapsto m$ induces a contractive inclusion from Fourier multipliers into Herz-Schur ones. The graph of this contractive inclusion is closed and, by the closed graph theorem, the fact that we cannot bound the inverse map implies that the inclusion is not surjective.
\end{remark}

\begin{remark} \normalfont
    \label{rem:discrete-modular-phases}
    In fact, given $z\in \TT$ and $p \in (1,\infty) \setminus \{2\}$ 
    \[
        \big\| T_{m_z}\colon L_p(\L \Gamma) \to L_p(\L \Gamma) \big\| \; < \; \infty
        \qquad \Longleftrightarrow \qquad
        z^4 = 1.
    \]
    In the exceptional cases $z^4 = 1$, $T_{m_z} $ is actually completely bounded. The above fact refers to the lattice $\Gamma$, not the full Lie group $\sG$. This is not used in the proof of uniform unboundedness of $T_{\widehat{m}_z}$, nevertheless we include it for completeness. For the negative assertion, we restrict to the index-three subgroup $H= \ker q$, where $q:\Gamma = \mathrm{A} \ast \mathrm{B} \to \mathrm{B}$ sends $a$ to $e$ and fixes $\mathrm{B}$. Writing $c_j=b^jab^{-j}$ 
    \[
        H= \big\langle c_0,c_1,c_2\mid c_0^2=c_1^2=c_2^2=e \big\rangle \; \simeq \; \ZZ_2 \ast \ZZ_2 \ast \ZZ_2. 
    \]
    A reduced nonempty word is of the form $h=c_{i_1}\cdots c_{i_n}$ with $i_j \neq i_{j+1}$ for $j \in \{1,2, \dots, n-1\}$ and thus corresponds to a word $b^{i_1} \, a \, b^{i_2-i_1} \, a \, \cdots \, b^{i_n-i_{n-1}} \, a \, b^{-i_n} \in \mathrm{A} \ast \mathrm{B}$. Counting the occurrences of $b^{\pm 1}$ gives
    \[
        \ell_T(h)=2\big[n-1+\1_{\{i_1\neq0\}}+\1_{\{i_n\neq0\}}\big].
    \]
    Thus, $m_z|_H$ is a radial phase with a correction depending on the first and last letters, rather than an exactly radial symbol in general.

    The Cayley graph of $H$ for these generators is the $3$-regular tree. Its radial algebra $\A_\rad$ is again generated by the sum of unit length elements $\chi_1$ and therefore Abelian. The spectral transform of $\chi_1/(2\sqrt2)$ identifies $\A_\rad$ with $L_\infty([-1,1],\nu_{2/3})$ and sends the normalized sphere sums $\chi_n/\|\chi_n\|_2$ to the orthonormal polynomials $Q_n$ considered in \eqref{eq:Qrecurrence} with parameter $\tau = \tfrac{2}{3}$, we refer to \cite{BoFen2006} for further details. Although $T_{m_z}$ need not preserve this algebra, its radial compression $\EE_{\A_\rad} T_{m_z}|_{\A_\rad}$ is represented on radial polynomials, with $w=z^2$, by
    \[
        \EE_{\A_\rad} T_{m_z}|_{\A_\rad} \ \mapsto \  \mathscr{A}_z
        =\frac{(1+2w)^2}{9w}\mathcal J_w
        -\frac{4(w-1)^2}{9w}\mathcal J_{-\frac{w}{2}}
        +\frac{w^{-1}-1}{3}\Delta_0,
    \]
    where $\mathcal{J}_w Q_n=w^n Q_n$ and $\Delta_0 f= \nu_{2/3}(f) \, \1$. Since the conditional expectation is contractive, boundedness of $T_{m_z}$ would imply the boundedness of the compressed operator $\mathscr{A}_z$. 
    The last two terms are bounded on every $L_p(\nu_{2/3})$, whereas the coefficient of $\mathcal{J}_w$ is nonzero. Thus, the generating-function argument of Lemma~\ref{prop:generating} with $\tau=\tfrac{2}{3}$ makes the compression---as well as the multiplier $T_{m_z}$---unbounded whenever $z^4\neq1$ and $p\neq2$.

    For the converse $m_{\pm1}\equiv1$, while $m_{\pm i}$ is the product of the character $\chi(a)=-1$, $\chi(b)=1$ and a symbol determined by the first and last free factors in $\mathrm{A} \ast \mathrm{B}$. Thus, complete boundedness follows from \cite[Theorem~3.5]{MeiRicard2017FreeHilb}. At $p=1,\infty$, boundedness holds precisely when $z^2=1$.
\end{remark}

\subsection{A cocycle weak law}

All that remains to do is to prove the weak law in Proposition \ref{prp:cocycleWeakLaw} for the modular cocycle. Before proceeding with the next lemma, it would be convenient to review the following well-known fact about continued fractions. First, given an irrational number $x \in (0,1)$, its continuous fraction representation is the unique expression 
\[
    x = [0;d_1, d_2, d_3, \dots ] := \cfrac{1}{d_1 + \cfrac{1}{d_2 + \cfrac{1}{d_3 + \cdots}}}.
\]
Observe that the first digit and the left-shifted continued fraction are easily obtained by taking integer parts as follows 
\[
    \G(x) = \left\{ \frac1{x} \right\} = \frac1x-\left\lfloor\frac1x\right\rfloor,
    \qquad x_j=\mathcal G^j(x),\qquad
    d_j = \left\lfloor\frac1{x_{j-1}}\right\rfloor
    \quad \text{ for every } \, j \geq 1.
\]
The map $\G \colon (0,1) \to (0,1)$, sometimes referred to as the Gauss map, is measurable and well defined in a set of total Lebesgue measure. Although $\G$ is not invertible, since it loses the information of the first digit $d_1$, it can easily be made so by extending it to the map $$\widehat{\G}: (0,1)^2 \to (0,1)^2$$ that pushes $d_1$ from one continuous fraction to the other
\[
    \widehat{\G}(x,v) = \Big( \frac1{x} - d_1, \, \frac1{d_1 + v} \Big), \quad \text{ where } \; d_1 = \left\lfloor \frac1{x} \right\rfloor.
\]
It is not difficult to check that  $\widehat{\G}$ preserves the probability measure 
\[
d \widehat{\eta}(x,v) = \frac{1}{\log 2}\frac{dx dv}{(1 + x v)^{2}}.
\]
Thus, the Gauss map $\G$ preserves its $x$-marginal $d\eta$, which is given by 
\[
    d\eta(x) = \frac1{\log 2} \frac{d x}{1 + x}.
\]
The fact that shifting continued fraction expansions preserves this measure was already known to Gauss, hence the name. We refer to  \cite{KatokUgarcovici2007} for the historical remarks. The measure $\widehat{\eta}$ can be made into a rescaled Lebesgue measure under the change of variables $\Phi: (0,1)^2 \to \Sigma_0$ which is given by
\[
    \Phi(x,v) = \Big( x, \, \frac{v}{1 + xv} \Big) \quad \text{and} \quad \Sigma_0 = \Big\{ (x,y): x \in (0,1), \; 0<y<\frac1{1+x} \Big\}.
\]
This transforms $\widehat{\G}$ into $\R_0: \Sigma_0 \to \Sigma_0$ given by
\[
    \R_0(x,y) := \big( \G(x), x - x^2 y \big).
\]
Now, we will employ another realization of the fundamental domain of the lattice $\Gamma \leq \sG$ in terms of continued fractions. We start by introducing the following extensions of the domain $\Sigma_0$ above
\begin{align*}
    \Sigma & = \Big\{ (x,y,\varepsilon): (x,y) \in \Sigma_0, \; \varepsilon\in\{0,1\}\Big\},\\
    \widetilde{\Sigma} & = \Big\{(x,y,\varepsilon,u): (x,y,\varepsilon) \in\Sigma, \; 0 \le u < -2\log x \Big\}.
\end{align*}
Take also
\[
    s_0(x,y)=\begin{bmatrix}1&y\\-x&1-xy\end{bmatrix} \quad \text{and} \quad
    s_1(x,y)=\begin{bmatrix}x&1-xy\\-1&y\end{bmatrix}.          
\]
The following lemma is from \cite{Arnoux1994}. As before, we will again normalize the Haar measure of $\sG$ so that $\Gamma \backslash \sG$ has an invariant probability measure $\mu$ that is scale compatible with the Haar measure.

\begin{lemma} \label{lem:arnoux-fd}
The following properties hold:
\begin{enumerate}[label={\rm \textbf{(\arabic*)}}, ref={\rm (\arabic*)}]
	    \item \label{lem:arnoux-fd1} The subset $\{  s_\varepsilon(x,y)a_u : (x,y,\varepsilon,u)\in\widetilde{\Sigma}\} \subset \sG$ is a measurable fundamental domain for $\Gamma \backslash \sG$.
        \item \label{lem:arnoux-fd2} The associated map $z \colon \widetilde{\Sigma} \to \Gamma \backslash \sG$ given by $z=z(x,y,\varepsilon,u)=\Gamma s_\varepsilon(x,y)a_u$ is bijective modulo null sets. Moreover, taking $\nu = z_*^{-1} \mu$ and writing $\#$ for the counting measure, we have that
        \[
            d\nu (x,y,\varepsilon,u)=\frac{3}{\pi^2} \, dx\,dy\,d \#(\varepsilon) \, du.
	    \]
        \item \label{lem:arnoux-fd3} The map $\R:\Sigma \to \Sigma$ given by $\R(x,y,\varepsilon) = \left(\R_0(x,y), 1-\varepsilon\right)$ preserves the measure $dx\,dy\,d\#(\varepsilon)$. 
	\end{enumerate}
\end{lemma}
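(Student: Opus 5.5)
The plan follows Arnoux's construction: realize the geodesic flow $(a_u)$ on $\Gamma\backslash\sG$ as a suspension flow over the cross-section $\{\Gamma s_\varepsilon(x,y)\}$. The first-return map should be $\R$ and the roof function $r(x)=-2\log x$. All three items then follow from a single matrix identity, a Jacobian computation, and a normalization check.

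\textbf{Item (3).} I would do this first since it is elementary. The map $\R_0(x,y)=(\G(x),x-x^2y)$ is triangular in $(x,y)$. On each branch $\{\lfloor 1/x\rfloor=d\}$ one has $\partial_x \G=-x^{-2}$ and $\partial_y(x-x^2y)=-x^2$, so $|\det D\R_0|=1$. It remains to check that $\R_0$ maps $\Sigma_0$ bijectively onto $\Sigma_0$ modulo null sets. The strip $\{\lfloor 1/x\rfloor=d\}\cap\Sigma_0$ should map onto the region
\[
\Big\{(x',y'): \tfrac{1}{d+1+x'}<y'<\tfrac{1}{d+x'}\Big\}.
\]
These regions tile $\Sigma_0$ as $d$ runs over $\NN$, because they are telescoping slices of $0<y'<1/(1+x')$. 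This conjugates the invertibility of $\widehat{\G}$ via $\Phi$, and flipping $\varepsilon$ is harmless.

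\textbf{Item (2), Haar measure.} Write $s_0(x,y)=\big[\begin{smallmatrix}1&0\\-x&1\end{smallmatrix}\big]\big[\begin{smallmatrix}1&y\\0&1\end{smallmatrix}\big]$. I would check that in the coordinates $g=n^-(x)\,n^+(y)\,a_u$ the Haar measure of $\sG$ is a constant multiple of $dx\,dy\,du$. This can be seen from the Maurer--Cartan form, or by noting that left $\bar N$-translation shifts $x$, while the remaining left actions act with unimodular Jacobian because $\sG$ is unimodular. The same holds for $s_1$, which differs from $s_0$ by a fixed right factor after the change $(x,y)\mapsto(x,y)$ up to an element of $\Gamma$ (or $\sK$). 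Hence $z_*^{-1}\mu=c\,dx\,dy\,d\#\,du$ on $\widetilde\Sigma$. The constant is then forced by $\mu$ being a probability measure:
\[
c\cdot 2\int_{\Sigma_0}(-2\log x)\,dx\,dy=c\cdot 2\int_0^1\frac{-2\log x}{1+x}\,dx=c\cdot 2\cdot\frac{\pi^2}{6},
\]
so $c=3/\pi^2$.

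\textbf{Item (1), fundamental domain.} This is the main obstacle. The key step is an explicit identity
\[
s_\varepsilon(x,y)\,a_{-2\log x}=\gamma_{d,\varepsilon}\,s_{1-\varepsilon}\big(\R_0(x,y)\big),\qquad d=\lfloor 1/x\rfloor,
\]
with $\gamma_{d,\varepsilon}\in\Gamma$ built from $\big[\begin{smallmatrix}1&d\\0&1\end{smallmatrix}\big]$ and $a$. I would verify it by direct multiplication; the sign flip $\varepsilon\mapsto1-\varepsilon$ absorbs the determinant $-1$ of the Gauss step in $\mathrm{GL}_2$. The identity shows that the images of the flow boxes $\{s_\varepsilon(x,y)a_u:0\le u<-2\log x\}$ fit together along the flow, by the standard suspension argument. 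Injectivity modulo $\Gamma$ then comes from uniqueness of continued fraction expansions of the endpoints of the geodesic $g\cdot(\infty,0)$. Surjectivity a.e. comes from the classical fact that almost every geodesic in $\HH^2$ is $\Gamma$-equivalent to one with endpoints in $(-\infty,-1)\times(0,1)$, which is Series' and Arnoux's reduction coding the orbit. Alternatively, once the boxes are known to be disjoint mod $\Gamma$ and to have total Haar mass $3/\pi^2\cdot\pi^2/3=1=\mu(\Gamma\backslash\sG)$, disjointness alone forces a full-measure cover. So I would prove only injectivity and finish with the volume count from Item (2).
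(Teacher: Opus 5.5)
The paper gives no proof of this lemma; it quotes it from Arnoux. Much of your reconstruction is sound:
\begin{itemize}
\item Item (3) is complete: the Jacobian is $1$ on each branch $\{\lfloor 1/x\rfloor=d\}$, and the images tile $\Sigma_0$ telescopically.
\item The Haar density $c\,dx\,dy\,du$ in the coordinates $n^-(x)n^+(y)a_u$ is correct.
\item Your return identity holds with $\gamma_{d,0}=\big[\begin{smallmatrix}1&-d\\0&1\end{smallmatrix}\big]$ and $\gamma_{d,1}=\big[\begin{smallmatrix}1&0\\-d&1\end{smallmatrix}\big]$. It is exactly the identity $s_{\varepsilon_j}(x_j,y_j)a_{\tau_j}=W_{\varepsilon_j}(d_{j+1})s_{\varepsilon_{j+1}}(x_{j+1},y_{j+1})$ that the paper uses later.
\end{itemize}
A small correction: in $\PSL_2(\RR)$ one has $s_1(x,y)=a\,s_0(-x,-y)$. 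So $s_1$ differs from $s_0$ by a left factor in $\Gamma$ together with a reflection of the parameters, not by a fixed right factor. The density conclusion is unaffected.

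The genuine gap is in item (1), in the route you say you would actually take.
\begin{itemize}
\item \textbf{The volume argument is circular.} In item (2) you fixed $c=3/\pi^2$ by imposing $c\cdot\pi^2/3=1$, which already assumes $z$ is a.e.\ surjective onto $\Gamma\backslash\sG$. Feeding that same $c$ back in to conclude that the disjoint boxes have full measure assumes what you want to prove. Without surjectivity you only know $c=1/\mathrm{vol}_{dx\,dy\,du}(\Gamma\backslash\sG)$ and $c\cdot\pi^2/3\le 1$.
\item \textbf{What is missing is an independent covolume computation in these coordinates.} Set $\xi=g\cdot 0=y/(1-xy)$ and $\eta=g\cdot\infty=-1/x$. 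Then one checks $dx\,dy\,du=d\xi\,d\eta\,dt/(\xi-\eta)^2$. A direct Jacobian computation in the upper half-plane gives $dA\,d\theta=2\,d\xi\,d\eta\,dt/(\xi-\eta)^2$, where $dA$ is hyperbolic area and $\theta\in[0,2\pi)$ is the angle. Hence $\mathrm{vol}_{dx\,dy\,du}(\Gamma\backslash\sG)=\tfrac12\cdot\tfrac{\pi}{3}\cdot 2\pi=\tfrac{\pi^2}{3}$, which matches $2\int_0^1\frac{-2\log x}{1+x}\,dx$. Only with this in hand does ``injective plus equal volume'' give a full-measure cover, and (2) then follows without circularity.
\item \textbf{Injectivity is under-justified.} It does not follow from uniqueness of continued fraction expansions alone. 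You need that any $\gamma\in\PSL_2(\ZZ)$ carrying one reduced geodesic onto another is a product of return words $\gamma_{d,\varepsilon}$, i.e.\ the two-sided digit sequences differ by a shift. That is the Serret--Series theorem, and it is the real content of Arnoux's result.
\end{itemize}
Once the covolume identity is available, a cheaper self-contained route runs the other way. Prove a.e.\ surjectivity by the explicit reduction algorithm, and obtain a.e.\ injectivity from the volume match.
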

\begin{remark} \normalfont
\label{rmk:ReturnTime}
The map $\R$ on the last item admits a very natural geometric interpretation. Indeed, if we consider the embedding $\iota \colon \Sigma \to \Gamma \backslash \sG$, its image $\Cc = \iota[\Sigma]$ is a two-dimensional subset composed of two sheets. Now, take the right action of the one-parameter group $t \mapsto a_t$ inside $\sG$ defined in \eqref{eq:defAgroup}. Clearly, it induces a flow $\phi_t \colon \Gamma \backslash \sG \to \Gamma \backslash \sG$ given by $\phi_t(\Gamma g) = \Gamma g a_t$. If $\zeta = (x,y, \varepsilon) \in \Sigma$ are the coordinates of a point in $\iota(\zeta) \in \Cc$, the map $\R$ describes the coordinates on $\Cc$ of the first point of the flow $t \mapsto \phi_t(\iota(\zeta))$ that returns to $\Cc$, while the return time is given by $r(x) = -2 \log x$. In other words, we have the identity $\phi_{r(x)}(\iota(\zeta)) = \iota(\R(\zeta))$. This justifies why the parametrization of $\Do$ contains $\Cc$ and an extra variable in $[0,-2\log x]$.
\end{remark}

Let $(x_0,y_0,\varepsilon_0)=(x,y,\varepsilon)$, we consider the iterates of the transform $\R$ for $j \ge 1$
\begin{equation} \label{Eq-Iteration}
    (x_{j},y_{j},\varepsilon_{j})=\R^{j}(x,y,\varepsilon) = \Big( 
    \G(x_{j-1}), \, x_{j-1}-x_{j-1}^2y_{j-1}, \, \varepsilon+j \!\! \pmod2 \Big).
\end{equation} 
Set $\tau_j=-2\log x_j$, where $j \geq 0$, to be the time it takes to reenter $\Cc$ after the $j^{\text{th}}$ iteration. Then given $z = z(x,y,\varepsilon,u) \in \Gamma \backslash \sG$ and \(t>0\), we can define \(n_t(z) \in \NN\) as the stopping time defined as follows
\[
    n_t(z) = \min \Big\{ n \geq 0 \mid S_n > t + u \Big\}, \quad \text{ where } \quad S_n := \sum_{j = 0}^{n} \tau_j.
\]
Here $u$ is the last coordinate of $z$. Clearly $S_{n_t(z)-1} \leq u+t < S_{n_t(z)}$ and $n_t(z)$ is well-defined a.e.

Let $\sigma_0 \colon \Gamma \backslash \sG \to \widetilde \Sigma \subset \sG$ be the section given by the inverse of the parametrization in Lemma \ref{lem:arnoux-fd} \ref{lem:arnoux-fd2}, and let \(\beta_0 \colon \Gamma \backslash \sG \times \sG\to \Gamma\) denote the associated right cocycle satisfying $\sigma_0(x)g=\beta_0(x,g)\sigma_0(xg)$ a.e. We derive the following useful formula for $\beta_0$. For \(d\in\NN\), write
\[
W_0(d)=(b^2a)^d \quad \text{and} \quad W_1(d)=(ba)^d.   
\]

In what follows, we shall work with functions defined on $z \in \Gamma \backslash \sG$ where $z = z(x,y,\varepsilon, u)$ and $(x,y,\varepsilon, u) \in \widetilde{\Sigma}$ as in Lemma \ref{lem:arnoux-fd} \ref{lem:arnoux-fd2}. Moreover, as above, the variable $x \in (0,1) \setminus \QQ$ admits the expression $x = [0;d_1, d_2, d_3, \dots]$ as a continued fraction. We have seen that the assignment function $x \mapsto d_j(x)$ satisfies $d_j(x) = \lfloor 1/x_{j-1} \rfloor$ where $x_{j-1}$ is given by \eqref{Eq-Iteration}. Assuming that $(x,y,\varepsilon,u) \in \widetilde{\Sigma}$ is given, we shall write $d_j$ for notational simplicity to refer to the evaluation $d_j(x)$.  

\begin{lemma}\label{lem:return-cocycle}
	The following identities hold a.e. 
	\begin{equation*} 
		\beta_0(z,a_t)
		=
		\prod_{1 \le j \le n_t(z)}^{\rightarrow}
		W_{\varepsilon+j-1\ ({\rm mod}\ 2)}
		\left( d_j \right) 
        \qquad \text{and} \qquad
        \ell_T(\beta_0(z,a_t))
		=
		2\sum_{j=1}^{n_t(z)} d_j,
	\end{equation*}
	The product arrow $\rightarrow$ means that the factors are ordered from left to right in increasing order of \(j\).
\end{lemma}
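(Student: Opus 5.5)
The plan is to prove the product formula for $\beta_0(z,a_t)$ by induction on the number of returns to the cross section $\Cc$, using the geometric description of $\R$ from Remark \ref{rmk:ReturnTime}. The length formula will then follow by counting occurrences of $b^{\pm1}$ via \eqref{Eq-nB}.

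\textbf{Step 1: one return.} The core computation is a single-return identity. For $(x,y,\varepsilon)\in\Sigma$ with $d_1=\lfloor 1/x\rfloor$ and $(x_1,y_1,\varepsilon_1)=\R(x,y,\varepsilon)$, I would verify the matrix identity
\[
s_\varepsilon(x,y)\,a_{-2\log x} \;=\; W_\varepsilon(d_1)\, s_{1-\varepsilon}(x_1,y_1)
\]
in $\PSL_2(\RR)$. This is the algebraic content of $\phi_{r(x)}(\iota(\zeta))=\iota(\R(\zeta))$ in Remark \ref{rmk:ReturnTime}, made explicit with the deck element. The check is a direct $2\times2$ computation. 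Right multiplication by $a_{-2\log x}$ scales the columns by $x^{-1}$ and $x$. One then computes $(b^2a)^d$ and $(ba)^d$ as unipotent-type matrices, for instance $ba=\begin{bmatrix}-1&0\\1&-1\end{bmatrix}$, so $(ba)^d=\pm\begin{bmatrix}1&0\\-d&1\end{bmatrix}$, and $b^2a$ is similarly a conjugate unipotent. Finally one uses $1/x=d_1+x_1$ and $y_1=x-x^2y$. I expect the main obstacle to be the sign and ordering conventions: which of $W_0,W_1$ pairs with which sheet, and whether the element appears as $W$ or $W^{-1}$. I would first test the identity on the lower-left and upper-left entries and adjust the labeling of $s_0,s_1$ if necessary, since the lemma is stated with that pairing.

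\textbf{Step 2: induction along the flow.} Write $g=s_\varepsilon(x,y)a_u$, so that $\sigma_0(z)=g$. For $t$ with $S_{n-1}\le u+t<S_n$, where $n=n_t(z)$, iterate Step 1 $n$ times:
\[
g\,a_t = s_\varepsilon(x,y)\,a_{\tau_0}\cdots = \Big(\prod_{j=1}^{n}W_{\varepsilon+j-1}(d_j)\Big)\, s_{\varepsilon_n}(x_n,y_n)\,a_{u+t-S_{n-1}}.
\]
Here $0\le u+t-S_{n-1}<\tau_n=-2\log x_n$, so the last factor lies in the fundamental domain of Lemma \ref{lem:arnoux-fd}\ref{lem:arnoux-fd1}. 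Hence it equals $\sigma_0(za_t)$ a.e., and the cocycle relation $\sigma_0(z)a_t=\beta_0(z,a_t)\sigma_0(za_t)$ gives the product formula. (The indexing of $n_t$, with $S_n$ including $\tau_0$, should be matched carefully here; I take $u<\tau_0$ into account.)

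\textbf{Step 3: length.} The product is an alternation of blocks $(b^2a)^{d_j}$ and $(ba)^{d_{j+1}}$. Since $b^2=b^{-1}$, the word is reduced as written: between consecutive blocks the juxtaposition is $a\cdot b^{\pm1}$, so there is no cancellation. The reduced word therefore contains exactly $\sum_j d_j$ letters $b^{\pm1}$. By \eqref{Eq-nB}, $\ell_T=2\sum_{j=1}^{n_t(z)}d_j$. Both $\ell_T$ and $n_{\mathrm B}$ are inverse-invariant, so the convention $\ell_T(\gamma)=d_T(o,\gamma^{-1}o)$ causes no issue.
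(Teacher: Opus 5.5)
Your proposal is correct and follows the paper's proof. The paper likewise uses the single-return identity $s_\varepsilon(x,y)a_{-2\log x}=W_\varepsilon(d_1)\,s_{1-\varepsilon}(x_1,y_1)$, which holds with the labeling of $s_0,s_1$ exactly as given, since $b^2a=\pm\begin{bmatrix}1&-1\\0&1\end{bmatrix}$ and $ba=\pm\begin{bmatrix}1&0\\-1&1\end{bmatrix}$. It then iterates up to the stopping time, so the leftover factor lands in the fundamental domain, and finishes by counting $b^{\pm1}$ via \eqref{Eq-nB}. Two details in your write-up are slightly cleaner than the paper's: you check explicitly that the concatenated word is reduced, and your indexing $a_{u+t-S_{n-1}}$ stays consistent with the original definition of $S_n$, whereas the paper's proof silently shifts that index.
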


\begin{proof}
	Note that 
	\[
	\begin{bmatrix}
		1& d_{j+1}\\
		0&1
	\end{bmatrix}^{-1}
	=
	(b^2a)^{d_{j+1}}
	=
	W_0(d_{j+1}),\quad 	
    \begin{bmatrix}
		1 & 0\\
		d_{j+1} &1
	\end{bmatrix}^{-1}
	=
	(ba)^{d_{j+1}}
	=
	W_1\left(d_{j+1}\right).
	\]
	A straightforward matrix calculation yields that
	\begin{equation*} 
		s_{\varepsilon_j}(x_j,y_j)\,a_{\tau_j}
		=
		W_{\varepsilon_{j}}
		\left(d_{j+1}\right)
		s_{\varepsilon_{j+1}}(x_{j+1},y_{j+1})
        \quad \text{ for every } j \geq 0.
	\end{equation*}
	By iteration, we obtain that 
	\[
		s_\varepsilon(x,y)\,a_{S_{n}}
		=
		\bigg[
		\prod_{1 \le j \le n}^{\rightarrow}
		W_{\varepsilon+j-1\ ({\rm mod}\ 2)}
		\left(
		d_j
		\right)
		\bigg]
		s_{\varepsilon_n}(x_n,y_n),
    \]
    where again $S_{n} = \sum_{j=0}^{n-1}\tau_j$. In particular
	\begin{equation}
        \label{eq:iterated-return-cocycle} 
        s_\varepsilon(x,y)a_{u+t}
        = 
        \bigg[
            \prod_{1 \le j \le n}^{\rightarrow}
            W_{\varepsilon+j-1\ ({\rm mod}\ 2)}
            \left(d_j\right)
	    \bigg]
        s_{\varepsilon_n}(x_n,y_n) a_{u+t-S_{n}}. 	
    \end{equation}
	By definition, we have $0 \leq  u+t-\sum_{j=0}^{n-1}\tau_j < \tau_n$ for $n=n_t(z)$ and hence the element after the bracketed product in \eqref{eq:iterated-return-cocycle} belongs to the fundamental domain in Lemma \ref{lem:arnoux-fd},
	which in particular yields that  
	\begin{equation}
        \label{eq:representative-after-flow}
		\sigma_0(za_t)
		= \sigma_0(\Gamma s_\varepsilon(x,y)a_{u+t} ) =
		s_{\varepsilon_n}(x_n,y_n)
		a_{u+t-S_{n}}.
	\end{equation}
	Note that $\sigma_0(z)a_t
	=
	s_\varepsilon(x,y)a_{u+t}$. So together with 
	\eqref{eq:iterated-return-cocycle} and definition of the cocycle $\beta_0$, we obtain the first identity of the lemma.
	Moreover, the last assertion immediately follows from identity \eqref{Eq-nB} 
	\[
	\ell_T(\beta_0(z,a_t))
	=
	2n_\mathrm{B}(\beta_0(z,a_t))
	=
	2\sum_{j=1}^{n_t(z)}
	d_j. \qedhere
	\]
\end{proof}

We use Khintchine’s weak law for the averages of continued fractions, see \cite[Section 4, p.~377]{Khintchine1935}.
\begin{lemma}\label{lem:khintchine}
	The functions $d_j = d_j(x)$ satisfy that
	\begin{equation}\label{eq:khintchine-weak-law}
		\frac{1}{n\log n}
		\sum_{j=1}^{n}
		d_j
        \,
		\longrightarrow
        \, 
        \frac1{\log2}
	\end{equation}
	in probability as $n \to \infty$ with respect to the Lebesgue measure on the unit interval $(0,1)$.
\end{lemma}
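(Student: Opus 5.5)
Khintchine's weak law is the statement that, under the Gauss measure $\eta$ (equivalent to Lebesgue measure with density between $\frac{1}{2\log 2}$ and $\frac{1}{\log 2}$), the stationary sequence $d_j = d_1\circ \G^{j-1}$ has sums $\sum_{j\le n} d_j$ concentrated at $\frac{n\log n}{\log 2}$. Since convergence in probability is preserved under passing between equivalent measures with bounded densities, it suffices to prove the claim under $\eta$, where the digits form a stationary process. The plan is a truncation argument combined with a second-moment estimate, using the quasi-independence (exponential $\psi$-mixing) of the Gauss map.

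First, compute the tail. Under $\eta$ we have
\[
\eta(d_1 = k) = \frac{1}{\log 2}\log\frac{(k+1)^2}{k(k+2)} \sim \frac{1}{\log 2}\,\frac{1}{k^2},
\quad\text{so}\quad
\eta(d_1 > N) \sim \frac{1}{N\log 2}.
\]
Choose the truncation level $N_n = n\log n$ and set $d_j^{(n)} = d_j\1_{\{d_j\le N_n\}}$.

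Next, control the pieces separately.

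1. \emph{Discarded part.} The probability that some $d_j$ with $j\le n$ exceeds $N_n$ is at most $n\,\eta(d_1>N_n)\lesssim 1/\log n\to 0$.
2. \emph{Mean.} The truncated mean is
\[
\EE\, d_1^{(n)} = \sum_{k\le N_n} k\,\eta(d_1=k) = \frac{\log N_n}{\log 2} + O(1) = \frac{\log n}{\log 2}(1+o(1)).
\]
Hence the truncated sum has mean $\frac{n\log n}{\log 2}(1+o(1))$.
3. \emph{Variance.} The diagonal terms satisfy $\EE (d_1^{(n)})^2 \lesssim N_n$, contributing $n N_n = n^2\log n$. This is $o\big((n\log n)^2\big)$.

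The off-diagonal covariances are the main obstacle, because the digits are not independent. Here I would invoke the classical $\psi$-mixing property of the Gauss measure (Kuzmin–Lévy–Gauss). For sets $A\in\sigma(d_1,\dots,d_i)$ and $B\in\sigma(d_{i+k},\dots)$ it gives
\[
|\eta(A\cap B)-\eta(A)\eta(B)| \le C\theta^k\,\eta(A)\eta(B), \qquad \theta<1.
\]
Applied to nonnegative truncated variables this yields
\[
|\mathrm{Cov}(d_i^{(n)},d_{i+k}^{(n)})| \le C\theta^k\,\big(\EE d_1^{(n)}\big)^2 \lesssim \theta^k\log^2 n.
\]
Summing over pairs gives $O(n\log^2 n) = o(n^2\log^2 n)$.

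Finally, Chebyshev's inequality gives concentration of the truncated sum around its mean. Combining this with the estimate for the discarded part yields \eqref{eq:khintchine-weak-law} under $\eta$, and hence under Lebesgue measure.
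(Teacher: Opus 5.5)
Your proof is correct. It differs from the paper only in that the paper does not prove this lemma at all: it quotes Khintchine's 1935 weak law (Section 4 of his paper) as a black box.

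Your truncation-plus-Chebyshev argument is the standard self-contained derivation, and the bookkeeping checks out:
\begin{itemize}
\item $\eta(d_1>N)\sim 1/(N\log 2)$, so the discarded part has probability $O(1/\log n)$;
\item the truncated mean is $\log N_n/\log 2+O(1)=(1+o(1))\log n/\log 2$;
\item the diagonal variance is $O(nN_n)=O(n^2\log n)$ and the off-diagonal part is $O(n\log^2 n)$, both $o\big((n\log n)^2\big)$;
\item the transfer from $\eta$ to Lebesgue measure is immediate, since $d\eta/dx\in[\frac{1}{2\log 2},\frac{1}{\log 2}]$.
\end{itemize}

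What your version buys is transparency about the real input: exponential $\psi$-mixing of the digit process under the Gauss measure (Kuzmin--L\'evy, Philipp). That is itself a nontrivial theorem, so you have traded one citation for a more foundational one.

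The multiplicative (relative-error) form of that mixing is genuinely needed, and you use it correctly. After extending from indicators to nonnegative functions via nonnegative simple functions, it bounds $\mathrm{Cov}(d_i^{(n)},d_{i+k}^{(n)})$ by $C\theta^k(\EE d_1^{(n)})^2\approx\theta^k\log^2 n$. An additive strong-mixing bound in terms of sup norms would only give $\theta^k N_n^2$, which is far too large to close the second-moment estimate.
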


Finally, the remaining weak law for the cocycle length is in reach.
\begin{proof}[Proof of Proposition~\ref{prp:cocycleWeakLaw}]
We claim that 
\[
\varphi_n(x) := \frac{1}{n} \sum_{j=0}^{n-1} \underbrace{\log \Big( \frac{1}{x_j^2} \Big)}_{\tau_j} \longrightarrow \frac{\pi^2}{6 \log 2} \quad \text{a.e. } x \in (0,1). 
\]
In fact, take $f(x,y, \varepsilon) = -2 \log(x)$. According to Lemma \ref{lem:arnoux-fd} \ref{lem:arnoux-fd3}, $\R \colon \Sigma \to \Sigma$ is measure preserving when $\Sigma$ is equipped with the probability measure $d\sigma(x,y,\varepsilon) = \tfrac{1}{2 \log 2} dx \, dy \, d \# (\varepsilon)$. Since $\R \colon \Sigma \to \Sigma$ is ergodic \cite[\S 5-6]{Arnoux1994}, Birkhoff's ergodic theorem applies to $f \circ \R^j$ and
\[
    \frac1n\sum_{j=0}^{n-1}\tau_j
    \longrightarrow
    -2 \int_\Sigma \log x \, \frac{dx \, dy \, d \# (\varepsilon)}{2\log 2} =
    \frac{\pi^2}{6\log2},\quad  a.e.\  (x,y,\varepsilon) \in \Sigma.
\]
By the definition of \(n_t(z)\), we have \(n_t(z)\to\infty\) as \(t\to\infty\) and
	\[
	\frac{1}{n_t(z)}
	\sum_{j=0}^{n_t(z)-1}\tau_j
	<
	\frac{u+t}{n_t(z)}
	<
	\frac{1}{n_t(z)}
	\sum_{j=0}^{n_t(z)}\tau_j.
	\]
Taking the limit and combining it with Lemma
\ref{lem:khintchine} yields
\[
        \frac{n_t(z)}{t} \longrightarrow  		\frac{6\log2}{\pi^2} \qquad \text{and} \qquad \frac{1}{t\log t}
		\sum_{j=1}^{n_t(z)} d_j
		\longrightarrow
		\frac6{\pi^2}
\]
in measure. Lemma \ref{lem:return-cocycle} now yields the first assertion for the cocycle $\beta_0$. To pass from $\beta_0$ to a general $\beta$, let $\sigma \colon \Gamma \backslash \sG \to \sG$ be the section associated with $\beta$ by $\sigma(x)g=\beta(x,g)\sigma(xg)$. Write \(\sigma(z)=q(z)\sigma_0(z)\) for \(q(z)\in\Gamma\) so that $\beta(z,g)= q(z) \beta_0(z,g) q(zg)^{-1}$.
Hence
\[
	\big|
	\ell_T(\beta(z,a_t))
	-
	\ell_T(\beta_0(z,a_t))
	\big|
	\leq
	\ell_T(q(z))+\ell_T(q(za_t)).
\]
Since \(\mu\) is invariant under \(z\mapsto za_t\), we obtain 
	\[
	\mu\big\{
		z:
		\ell_T(q(za_t))>\eta t\log t
		\big\}
		=
		\mu\big\{
		z:
		\ell_T(q(z))>\eta t\log t
		\big\}
	\to 0,\quad \eta>0.
	\]
	Hence
	\[
	\frac{
		\ell_T(\beta(\cdot ,a_t))-\ell_T(\beta_0(\cdot,a_t))
	}{t\log t}
	\longrightarrow0
	\]
in probability. Together with the first assertion for $\beta_0$, this proves the desired convergence for the cocycle $\beta$. To conclude, let $R_j\to\infty$, $z_j = \exp ( i \pi^2/12R_j\log R_j)$ and fix \(s>0\). According to the first assertion and since $\log(R_js)/\log R_j \to 1$,
	we have
	\[
	m_{z_j}(\beta(\cdot,a_{R_js}))
	=
	\exp\left( 	i\frac{\pi^2\ell_T(\beta(\cdot,a_{R_js}))}
	{12R_j\log R_j}
	\right)
	\longrightarrow e^{i s}
	\]
	in \(\mu\)-probability.	
	Now, since
	\( m_{z_j}(\beta(\cdot,a_{R_js})) \) are uniformly bounded 
	\[
    \lim_{j \to \infty} \widehat m_{z_j}(a_{R_js})
		 = \lim_{j \to \infty}
			\int_{\mathrm{\Omega}}
			m_{z_j}(\beta(x,a_{R_js}))\,d\mu(x) =  e^{i s}. \qedhere
	\]
	\end{proof}

\section{Some open problems}\label{Sect:Problems}

\subsection{Nonamenable groups and Fourier-Schur transference} 
Let $\sG$ be a l.c.s.c. group and $p \in (1,\infty)\setminus \{2\}$. We will say that the group has Fourier-Schur transference in $L_p$ if the following inequality holds
\begin{equation}
    \label{eq:TransferenceProperty}
    \big\| T_m: L_p(\L \Gamma) \to L_p(\L \Gamma) \big\|_\cb \leq C \, \big\| H_m: S_p(\Gamma) \to S_p(\Gamma) \big\|_\cb,
\end{equation}
for every bounded function $m \in L_\infty(\sG)$ and a constant $C > 0$ independent of $m$. As usual, we will use the convention that the quantities above are infinite when the function $m$ does not induce a completely bounded Fourier or Herz-Schur multiplier. Observe also that, since the reverse inequality holds contractively, see \cite[Theorem 4.2]{CasSall2015}, the identity map on $m$ induces a bounded contractive inclusion from Fourier to Herz-Schur multipliers. Thus, the inequality \eqref{eq:TransferenceProperty} is equivalent, due to the closed graph theorem, to the equality as sets of Fourier and Herz-Schur multipliers in $L_p$ and $S_p$ respectively.

Now that we know that no discrete group containing a free subgroup has the Fourier-Schur transference property, it is natural to ask whether that property, either for some $p$ or for all the nontrivial range, is enough to ensure amenability. 

\begin{problem}
    \label{prb:reciprocal}
    Fix $1 < p \neq 2 < \infty$. Is a group $\Gamma$ satisfying \eqref{eq:TransferenceProperty} automatically amenable?
\end{problem}

The closest result in this direction is in \cite[Theorem 2.1]{CasSall2015}, in which Caspers/de la Salle showed that the technique employed to prove Fourier-Schur transference in the amenable case does not work in discrete nonamenable groups. However, their result uses as an hypothesis the existence of an intertwining isometry from $L_p(\L \sG)$ into an ultraproduct of Schatten classes and is not clear at present if a contradiction would also follow from just assuming a quasi-isometric embedding. A solution in either direction to the above problem would be of great interest.

Observe as well that all of our examples ---free groups, $\PGL_2(\QQ_q)$ and $\PSL_2(\RR)$--- exhibit \textit{rank one behavior}. An extremely interesting question is whether it is possible to produce natural $\sK$-biinvariant Herz-Schur multipliers in $S_p$ that are unbounded as Fourier multipliers in $L_p$ for higher rank groups. Perhaps the simplest case would be that of $\sG = \PGL_3(\QQ_q)$ with its Bruhat-Tits' building playing the role of the homogeneous tree $X_{q+1}$ in the rank one case. Nevertheless, in this simple setting, the failure of the Schur approximation property for large $p$ \cite{LaffSall2011} already imposes a restriction on Schur boundedness that is absent in the case of $\PGL_2(\QQ_q)$. Indeed, the analogues of the multipliers on Theorem \ref{corFree2}, that is, two-parameter phases associated to the dominant coweights of the Bruhat-Tits building, cannot be uniformly bounded as Schur multipliers.

\subsection{Growth of the Fourier-Schur ratio} \label{SectFSTransfRatio} 
Since now we have counterexamples to Fourier-Schur transference, the local transference results studied in \cite{ParcetSLn2018} become of renewed interest. Indeed, consider the ratio
\[
    \R_{\sG,p}(\Omega) := \sup_{\supp m \hskip1pt \subset \hskip1pt \Omega} \frac{\| T_m \colon L_p(\L \sG) \to L_p(\L \sG)\|_\cb}{\| S_M \colon S_p(\sG) \to S_p(\sG) \|_\cb}
\]
for any compact set $\Omega$ in a given l.c. group $\sG$. Here $1 \le p \le \infty$ and $M(g,h) = m(g^{-1}h)$ is the Herz-Schur symbol associated to $m$. Theorem \ref{CorFree}\ref{itm:CorFree1} implies that $\varphi_p(n) = \R_{\FF_r,p}(B_n(e))$ diverges to $\infty$ with $n$, but its growth rate  remains open. This has been investigated in \cite{ParcetSLn2018, Parcet2024Idempotentschur} in connection with the amenability modulus of a compact set $\Omega$
\[
    \delta_\sG(\Omega) := \inf_{\| \xi \|_{L_2(\sG)} = 1} \sup_{g \in \Omega} \big| 1 - \langle \xi, \lambda_g \xi \rangle \big|.
\]
By \cite[Lemma 1.1]{ParcetSLn2018}, $\delta_{\sG}(\Omega) = 0$ for every compact set $\Omega$ when $\sG$ is amenable, while $\delta_{\sG}(\Omega_n) \to 1$ as the compact sets $\Omega_n$ increase to fill the whole group when $\sG$ is nonamenable. In \cite[Section 1]{ParcetSLn2018} the authors obtained that 
\[
    \R_{\sG,2m}(\Omega) \leq \frac{1}{1 - \delta_{\sG} \big( (\Omega \Omega^{-1})^{m} \big)}. 
\]
Although refined inequalities were found in \cite[Section 3]{Parcet2024Idempotentschur}, all the known inequalities are far from optimal. Indeed, as $p \to 2,\infty$ the constant $\R_{\sG,p}(\Omega)$ should go to $1$, while the known estimates always diverges when $\Omega$ contains a generating set. Similar arguments have been used in the context of Fourier restriction \cite{CaspersJanssensKrishnaswamyUshaMiaskiwskyi2024}. This  raises the following natural question for nonamenable groups.

\begin{problem}
     What is the optimal dependency of $\R_{\sG,p}(\Omega)$ in terms of $p$ and $\delta_\sG(\Omega)$?
\end{problem}

In personal communications, Patrick Poissel shared with the authors that he showed that the above dependency can be improved in a way that does not diverge as $p \to  \infty$. However, his approach comes at the expense of using a different amenability modulus obtained by quantifying the local existence of Fourier-Stieltjes approximations of the identity in the multiplier algebra.

\subsection{Counterexamples to de Leeuw restriction}

Let $\Gamma \leq \sG$ be a closed discrete subgroup of the l.c.s.c group $\sG$. Determining under which conditions it is possible to restrict a (continuous) $L_p$-bounded Fourier multiplier from $\sG$ to $\Gamma$ is currently open. In the case of $\RR^n$ and other locally compact Abelian groups it is always possible to do so \cite{deLeeuw1961}, but the question is still open in general. If Fourier-Schur transference were to hold for the discrete group $\Gamma$, Lafforgue/de la Salle sampling theorem \cite[Theorem 1.19]{LaffSall2011} would imply the restriction estimate
\[
    \big\| T_{m|_\Gamma}: L_p(\L \Gamma) \to L_p(\L \Gamma) \big\|_\cb 
    \; \leq \; C \, \big\| T_{m}: L_p(\L \sG) \to L_p(\L \sG) \big\|_\cb,
\]
for every $m$ continuous. In the absence of Fourier-Schur transference, Caspers, Parcet, Perrin and Ricard \cite{CasParPerrRic2014} showed that Fourier multiplier restriction holds when $\sG$ has the \emph{small almost $\Gamma$-invariant} neighborhood property or $[\mathrm{SAIN}]_\Gamma$, see also \cite{CaspersJanssensKrishnaswamyUshaMiaskiwskyi2024} for local and multilinear extensions. However, this property fails for natural pairs like $\PSL_2(\ZZ) \leq \PSL_2(\RR)$. The reason why the $[\mathrm{SAIN}]_\Gamma$ condition is required in their proof mimics why amenability was required in \cite{NeuRic2011}. Indeed, $[\mathrm{SAIN}]_\Gamma$ is needed to overcome the fact that the natural embedding $\L \Gamma \into \L \sG$ is not trace-preserving if $\sG$ is nondiscrete. All of this strongly suggests that counterexamples to Fourier restriction along the lines of the Fourier-Schur transference should exist. 

 
\textbf{AI-use statement.} Inequality (SphT) was produced by ChatGPT-5.6 in late June, in response to questions concerning previous results from a draft paper—already mathematically complete at the time—on mixed norms for free-group multipliers \cite{MixedNorm2026}, where the key martingale filtration on trees was used in terms of the hyperbolic boundary of a free group. After realizing the importance of the technique around Lemma \ref{KeyLemma}---relying on both its simplicity and its depth---this text slowly grew as an effort to understand its full scope, implications and also limitations for Fourier and Schur multipliers. We hope to have made these points clear around this intriguing problem. During this process, AI tools were used to aid with aside computations and references. All mathematical arguments in this manuscript were verified and written by the authors.


\textbf{Acknowledgement.} We want to express our gratitude to Tao Mei, \'Eric Ricard, Mikael de la Salle and Hong-Wei Zhang for several helpful comments. A. González-Pérez was supported by the Ramón y Cajal grant \texttt{RYC2022-037045-I} (Ministry of Science, Spain) while in a research stay at Xiamen University. J. Parcet was supported by the European Research Council: Advanced Grant SMARP \texttt{101263125} and  Severo Ochoa Grant \texttt{CEX2023-001347-S}. The first three-named authors were also supported in part by Spanish Grant \texttt{PID2022-141354NB-I00}. J.P. García and S. Wang were supported by the NSF of China: \texttt{No.12671160}, \texttt{No.W2611005}, \texttt{No.W2441002}, \texttt{No.12031004}.  

\printbibliography

\vskip25pt

\small

\textbf{Adrián  M. González-Pérez} \hfill \textbf{Javier Parcet} \\ 
Departamento de Matemáticas \hfill Instituto de Ciencias Matem{\'a}ticas
\\ Universidad Aut\'onoma de Madrid \hfill Consejo Superior de Investigaciones Científicas
\\ \texttt{adrian.gonzalez@uam.es} \hfill \texttt{parcet@icmat.es}

\vskip5pt

\textbf{Jorge Pérez García} \hfill \textbf{Simeng Wang} \\
IAS Mathematics \hfill IAS Mathematics 
\\ Harbin Institute of Technology \hfill Harbin Institute of Technology  
\\ \texttt{moxim@hotmail.es} \hfill \texttt{simeng.wang@hit.edu.cn}

\end{document}